\documentclass[12pt,a4paper,oneside]{amsart}
\usepackage[english]{babel}
\usepackage[utf8]{inputenc}
\usepackage{amsmath,amssymb}
\usepackage{amsthm,mathtools}
\usepackage[foot]{amsaddr}
\usepackage{graphicx}
\usepackage{bbm}
\usepackage{geometry}
\usepackage{tikz}
\usepackage{tikz-cd}
\usepackage{mathrsfs}
\usepackage[colorinlistoftodos, textwidth=25mm]{todonotes}
\usepackage{mycommands}
\usepackage[pdfborder={0 0 0.5}]{hyperref}
\usepackage{verbatim}
\usepackage{enumitem}
\setlist[enumerate]{label=\arabic*.}

\theoremstyle{plain}
\newtheorem{thm}{Theorem}[section]
\newtheorem{lem}[thm]{Lemma}
\newtheorem{prop}[thm]{Proposition}
\newtheorem{exa}[thm]{Example}
\newtheorem{cor}[thm]{Corollary}
\newtheorem{conj}[thm]{Conjecture}
\newtheorem{problem}[thm]{Problem}
\theoremstyle{definition}
\newtheorem{defn}[thm]{Definition}
\theoremstyle{remark}
\newtheorem*{rmk}{Remark}

\usepackage{cleveref}
\crefname{section}{§}{§§}
\Crefname{section}{§}{§§}

\makeatletter
\renewcommand{\email}[2][]{%
  \ifx\emails\@empty\relax\else{\g@addto@macro\emails{,\space}}\fi%
  \@ifnotempty{#1}{\g@addto@macro\emails{\textrm{(#1)}\space}}%
  \g@addto@macro\emails{#2}%
}
\makeatother

\begin{document}
\title{Quaternionic Mahler measure}
\author{\small Weijia Wang}
\author{\small Hao Zhang}
\address[Weijia Wang]{School of Mathematics, Shandong University, Jinan 250100, P. R. China}
\address[Hao Zhang]{School of Mathematics, Hunan University, Changsha 410082, P. R. China}
\email[Weijia Wang]{weijiawang@amss.ac.cn}
\email[Hao Zhang (Corresponding author) ]{zhanghaomath@hnu.edu.cn}
\date{}

\subjclass[2020]{Primary 11R06.}

\begin{abstract}
    We introduce the quaternionic Mahler measure, extending the classical complex Mahler measure to non-commutative polynomials. We establish the existence of quaternionic Mahler measure for slice regular polynomials in one and two variables. We study the quaternionic Mahler measure for real and slice regular polynomials, and consider the associated Lehmer problem. Various formulas of quaternionic Mahler measures are proved.
\end{abstract}

\maketitle

\section{Introduction}
Given a non-zero complex Laurent polynomial $P\in\C[x_1^{\pm},\dots,x_n^{\pm}]$, the complex (logarithmic) Mahler measure of $P$ is defined as
\[\mC(P)\coloneqq\int_{\mbt^n(\C)}\log\abs{P(x_1,\dots,x_n)}\dif\mu(\mbt^n(\C)),\]
where $\mbt^n(\C)=\{(x_1,\dots,x_n)\in\C^{n}\,|\,|x_1|=\dots=|x_n|=1\}$ is the unit complex torus and  $d\mu(\mbt^n(\C))=(2\pi i)^{-n}\prod_{\ell=1}^{n}\frac{\dif x_{\ell}}{x_\ell}$ is its probability Haar measure.

The origins of Mahler measure can date back to the early 20th century when Lehmer~\cite{Leh} introduced it (of one variable) in his study of finding large prime numbers. Lehmer proposed what came to be known as \emph{Lehmer's conjecture}, which concerns the smallest non-zero Mahler measure that an irreducible polynomial in $\Z[x]$ could attain. Later, Mahler~\cite{Mah} extended the concept to several complex variables in utilization to explore the heights of complex commutative polynomials. 

In the early 1980s, Smyth~\cite{Smy} discovered the notable identity,
\[\mC(1+x+y)=\frac{3\sqrt{3}}{4\pi}L(\chi_{-3},2)=L'(\chi_{-3},-1),\]
where $L(\chi_{-3},s)$ is the Dirichlet $L$-function of the odd Dirichlet character of conductor $3$. The machinery of relating the Mahler measure with $L$-functions and regulators, continues to be explored through the work of Boyd~\cite{Boy}, Deninger~\cite{Den} and Rodriguez-Villegas~\cite{RV}. For a thorough survey about complex Mahler measure, the readers are also recommended to consult the book~\cite{B-Z} of Brunault and Zudilin.

Variants of Mahler measure have also been proposed in the literature. For example, the definition of elliptic Mahler measure by Everest and n\'{\i} Fhlath\'uin~\cite{EnF}, Akatsuka's Zeta Mahler measure~\cite{Aka}, and Pritsker's examination of the areal Mahler measure~\cite{Pri1} by replacing the integration with area measure on the open unit complex disk (see also Pritsker~\cite{Pri2}, Lal\'{\i}n--Roy~\cite{LR1,LR2}).

It is natural to seek an analogue of Mahler measure for non-commutative polynomials. The quaternions are perhaps the most familiar non-commutative algebra, and hence a natural starting point for such an extension.

In this manuscript, let $\mbh$ be the quaternion algebra. For a non-commutative polynomial $P(x_1,\dots,x_n)$ with coefficients in $\mbh$, we consider the following quaternionic analogue of Mahler measure
\[\mH(P)\coloneqq\int_{\mbt^n(\mbh)}\log\abs{P(x_1,\dots,x_n)}\dif\mu(\mbt^n(\mbh)),\]
where $\mbt^n(\mbh)=\{(x_1,\dots,x_n)\in\mbh^{n}\,|\,|x_1|=\dots=|x_n|=1\}$ becomes the unit quaternion torus and $d\mu(\mbt^n(\mbh))$ is again the probability Haar measure. Following~\cite{Aka}, we may also define the corresponding quaternionic zeta Mahler measure of $P(x_1,\dots,x_n)$ by
\[Z_{\mbh}(P;s)\coloneqq\int_{\mbt^n(\mbh)}\abs{P(x_1,\dots,x_n)}^s\dif\mu(\mbt^n(\mbh)),\qquad s\in \mbc.\]

The quaternionic Mahler measure shares many properties analogous to its complex counterpart. For example, given two quaternionic polynomials $P$ and $Q$, one has $\mH(PQ)=\mH(P)+\mH(Q)$, where $PQ$ must be the \emph{non-commutative product} of $P$ and $Q$. It is worth noting that even for slice-preserving (i.e. real) polynomials, the quaternionic Mahler measure is not merely the sum of Mahler measures of monomials. 

Our first result on monomials and slice-preserving polynomials is summarized as follows.
\begin{thm}\label{thm:Jensenformulas}
Let $\alpha\in \mbh$. If $\alpha$ is non-zero, set $\phi=\arccos(\RE \alpha/|\alpha|)$; if $\alpha=0$, set $\phi=0$.
\begin{enumerate}
    \item For $\mH(x-\alpha)$, we have 
    \[m_{\mbh}(x-\alpha)=\begin{dcases}
        \frac{1}{4}\,|\alpha|^2&\quad \text{if } |\alpha|\leq 1,\\
        \frac{1}{4}\,|\alpha|^{-2}+\log |\alpha|&\quad \text{if } |\alpha|>1.
    \end{dcases}\]
    \item For $m_{\mbh}(x^2-2\RE \alpha\cdot x+|\alpha|^2)$, we have 
    \[m_{\mbh}(x^2-2\RE \alpha\cdot x+|\alpha|^2)=\begin{dcases}
        \frac{1}{2}\,|\alpha|^{2}\cos 2\phi &\text{if }|\alpha|\leq 1,\\
        \frac{1}{2}\,|\alpha|^{-2}\cos 2\phi +2\log|\alpha|&\text{if }|\alpha|> 1.
    \end{dcases}\]
    \item  For $\mH(P(x))$, where $P(x)$ is a real polynomial completely factorized as
    \begin{equation}\label{eq:factorofp}
        P(x)=c\,\prod (x-\alpha_h)^{n_h}\prod (x^2-2\RE \beta_{\ell}\cdot x+\abs{\beta_{\ell}}^2)^{m_\ell},
    \end{equation}
we have
\begin{align*}
        \mH(P(x)) &= \log \abs{c}+\sum n_{h}\mH(x-\alpha_h)\\
        &\quad + \sum m_{\ell}\mH(x^2-2\RE \beta_{\ell}\cdot x+\abs{\beta_{\ell}}^2).
    \end{align*}
\end{enumerate}  
\end{thm}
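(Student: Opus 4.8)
The plan is to evaluate (1) and (2) as explicit one-dimensional integrals over the quaternion sphere $\mbt^1(\mbh)=S^3\subset\mbh\cong\mathbb{R}^4$, and then to deduce (3) formally from the multiplicativity of the quaternionic norm. For (1), the starting point is the identity $|x-\alpha|^2=1+|\alpha|^2-2\langle x,\alpha\rangle$ valid for $|x|=1$, where $\langle x,\alpha\rangle=\RE(x\overline{\alpha})$ is the Euclidean inner product on $\mathbb{R}^4$. By the $O(4)$-invariance of the Haar (uniform) measure on $S^3$, the quantity $\langle x,\alpha\rangle$ has the same law as $|\alpha|\,\RE x$, so the integral depends on $\alpha$ only through $|\alpha|$ and, writing $\RE x=\cos\theta$, collapses to
$$\mH(x-\alpha)=\frac{1}{\pi}\int_{0}^{\pi}\log\bigl(1+|\alpha|^2-2|\alpha|\cos\theta\bigr)\sin^2\theta\,\dif\theta,$$
once one knows that the real part of a uniform point of $S^3$ has density $\frac{2}{\pi}\sqrt{1-t^2}$ on $[-1,1]$, i.e. $\theta$ has density $\frac{2}{\pi}\sin^2\theta$ on $[0,\pi]$. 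I would then insert the Fourier expansion $\log(1-2r\cos\theta+r^2)=-2\sum_{k\ge1}\frac{r^k}{k}\cos k\theta$ (for $r=|\alpha|\le1$) together with $\sin^2\theta=\tfrac12(1-\cos2\theta)$; orthogonality annihilates every term except $k=2$ and leaves $\tfrac14|\alpha|^2$. For $|\alpha|>1$ one replaces this by $\log(1-2r\cos\theta+r^2)=2\log r-2\sum_{k\ge1}\frac{r^{-k}}{k}\cos k\theta$, whose constant term contributes the extra $\log|\alpha|$ and whose tail contributes $\tfrac14|\alpha|^{-2}$.

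For (2) the decisive point is that $Q(x)=x^2-2\RE\alpha\cdot x+|\alpha|^2$ has \emph{real} coefficients. Writing a unit quaternion in polar form $x=\cos\theta+u\sin\theta$ with $u$ a unit imaginary quaternion, the subalgebra $\mathbb{R}+\mathbb{R}u$ is isometrically isomorphic to $\C$ via $u\mapsto i$; under this map $x\mapsto e^{i\theta}$ and, because $Q$ is real, $Q(x)\mapsto Q(e^{i\theta})$. Hence $|Q(x)|$ depends on $x$ only through $\theta=\arccos(\RE x)$ and equals the complex modulus $|e^{i\theta}-\alpha_{\C}|\,|e^{i\theta}-\overline{\alpha_{\C}}|$, where $\alpha_{\C}=|\alpha|e^{i\phi}$. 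Integrating $\log|Q(x)|$ against the density $\frac{2}{\pi}\sin^2\theta$ and expanding each factor via the same Fourier series, the identity $\cos\bigl(k(\theta-\phi)\bigr)+\cos\bigl(k(\theta+\phi)\bigr)=2\cos k\phi\cos k\theta$ again leaves only $k=2$, producing $\tfrac12|\alpha|^2\cos2\phi$ for $|\alpha|\le1$ and the additional $2\log|\alpha|$ alongside $\tfrac12|\alpha|^{-2}\cos2\phi$ for $|\alpha|>1$.

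Part (3) is then immediate. Every factor in \eqref{eq:factorofp} is a polynomial in $x$ with real coefficients, hence all factors lie in the commutative ring generated by $x$ over $\mathbb{R}$, so the factorization is a genuine pointwise identity of $\mbh$-valued functions on $\mbt^1(\mbh)$. Multiplicativity $|pq|=|p|\,|q|$ of the quaternionic norm gives, at each point, $\log|P(x)|=\log|c|+\sum n_h\log|x-\alpha_h|+\sum m_\ell\log|x^2-2\RE\beta_\ell\cdot x+|\beta_\ell|^2|$, and integrating term by term and applying (1) and (2) yields the claim.

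I expect the main work to lie not in any single hard estimate but in setting up the reductions rigorously: justifying the pushforward of the uniform measure on $S^3$ onto the semicircle-type law $\frac{2}{\pi}\sqrt{1-t^2}$ of the real part, and confirming that $|Q(x)|$ truly depends only on $\RE x$ through the slice isomorphism. One must also track carefully the dichotomy $|\alpha|\le1$ versus $|\alpha|>1$, check that the two branches agree at $|\alpha|=1$, and note that the logarithmic singularity of the integrand on $S^3$ is integrable, so that all the integrals are well defined.
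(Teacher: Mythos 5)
Your proposal is correct and follows essentially the same route as the paper: both reduce, via invariance of the Haar measure on $\mbt^1(\mbh)$, to the one-dimensional integral against the density $\frac{2}{\pi}\sin^2\theta\,\dif\theta$ (the paper through similarity plus its Proposition on slice-preserving polynomials, you through $O(4)$-invariance), then evaluate it by the Fourier expansion of $\log(1-2r\cos\theta+r^2)$ where only the $k=2$ term survives, and both obtain part (3) from the pointwise commutativity of real factors together with multiplicativity of the quaternionic norm. The only cosmetic difference is that the paper packages the Fourier computation into a general Jensen-type lemma (with polylogarithm terms needed elsewhere), whereas you extract just the surviving term.
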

\begin{rmk}
Our results here are consistent with, and can be derived from \cite{AB}, where a Jensen formula for slice-preserving regular functions is established.
\end{rmk}

In addition, we calculate the zeta Mahler measure of quaternionic monomials
\[Z_{\mbh}(x-\alpha;s)\coloneqq\int_{\mbt^{1}(\mbh)}\abs{x-\alpha}^s\dif\mu(\mbt^{1}(\mbh)),\]
as listed in Theorem~\ref{thm:zetameaofx}.

Additionally, we consider the Mahler measure of a class of more general quaternionic univariate polynomials, called \emph{slice regular polynomials}. These polynomials are a special kind of entire \emph{slice regular functions}. The theory of slice regular functions, which extends the theory of holomorphic functions in complex variables, has been systematically developed over the past two decades. Initially introduced within the context of quaternion algebra by Gentili and Struppa~\cite{GS1} in 2006, slice regular theory has been successfully broadened to the real alternative $\ast$-algebras~\cite{GP1} and several variables~\cite{GP2}. (See also the monograph~\cite{GSS} for an overview on the theory of slice regular functions in one quaternionic variable).

For a slice regular polynomial of one variable, we prove the existence of its Mahler measure and establish also one lower bound.
\begin{thm}\label{thm:existence}
    Let $P(x)$ be a non-zero slice regular quaternionic polynomial. Then the quaternionic Mahler measure $\mH(P)$ exists as an improper Riemann integral. Moreover, we have always
    \[\mH(P)\geq\frac{1}{2}\mH(P^{s}),\] 
    where  the equality holds if and only if $P$ is a slice-preserving polynomial.
\end{thm}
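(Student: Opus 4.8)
The plan is to reduce everything to the explicit one-factor formulas of Theorem~\ref{thm:Jensenformulas}, using the multiplicativity of the symmetrization together with the additivity of $\mH$ over the non-commutative ($*$-)product stated in the introduction. First I would settle existence independently of any factorization. On the compact sphere $\mbt^1(\mbh)=S^3$ the function $|P|$ is continuous and bounded, so $\log|P|$ is bounded above and the only possible divergence comes from the zero set $Z=\{x\in S^3:P(x)=0\}$. By the structure theory of zeros of slice regular polynomials, $Z$ is a finite union of isolated points and isolated $2$-spheres, hence a closed set of codimension $\ge 1$ in $S^3$; near $Z$ one has $\log|P(x)|\gtrsim\log\operatorname{dist}(x,Z)$, and $\log\operatorname{dist}(\cdot,Z)$ is integrable over $S^3$ (along the normal directions the singularities $\int_0^\varepsilon\log r\,dr$ and $\int_0^\varepsilon r^2\log r\,dr$ both converge). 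Thus $\mH(P)$ converges as an improper Riemann integral.

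For the inequality I would invoke two structural facts about slice regular polynomials: the fundamental theorem of algebra, giving a $*$-factorization $P=c\,(x-q_1)*\cdots*(x-q_n)$, and the multiplicativity of symmetrization, $(f*g)^s=f^s*g^s$ with $f^s$ real and hence central. Combined with $c^s=|c|^2$ and $(x-q)^s=x^2-2\RE q\cdot x+|q|^2$, these yield $P^s=|c|^2\prod_i\bigl(x^2-2\RE q_i\cdot x+|q_i|^2\bigr)$, a fully factored real polynomial. Applying the additivity $\mH(P*Q)=\mH(P)+\mH(Q)$ to $P$ and Theorem~\ref{thm:Jensenformulas}(3) to $P^s$, the claim reduces to the single-factor inequality $\mH(x-q_i)\ge\tfrac12\,\mH\bigl((x-q_i)^s\bigr)$. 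This is immediate from parts (1) and (2) of Theorem~\ref{thm:Jensenformulas}: the difference equals $\tfrac14|q_i|^{2}(1-\cos2\phi_i)$ when $|q_i|\le1$ and $\tfrac14|q_i|^{-2}(1-\cos2\phi_i)$ when $|q_i|>1$, both nonnegative. Summing over $i$ and matching the $\log|c|$ contributions gives $\mH(P)\ge\tfrac12\,\mH(P^s)$.

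For the equality case, each single-factor difference vanishes exactly when $\cos2\phi_i=1$, i.e.\ when $\RE q_i=\pm|q_i|$; hence equality in the summed inequality forces every $q_i\in\mathbb{R}$, and conversely if every $q_i\in\mathbb{R}$ then $(x-q_i)^s=(x-q_i)^2$ and equality holds factorwise, so (up to its leading constant) $P$ is a real polynomial. The step I expect to demand the most care is precisely this equality analysis, together with a clean justification that the stated additivity of $\mH$ may be applied across a $*$-factorization whose individual factors need not be real. Should that additivity be awkward to apply directly, the inequality itself can be obtained without factorization by a slicewise estimate: disintegrating the Haar measure as $\mH(f)=\tfrac{1}{4\pi^2}\int_{S^2}\int_0^{2\pi}\log|f(e^{I\theta})|\sin^2\theta\,d\theta\,dA(I)$ and using the splitting $f|_{\C_I}=F+GJ$, one has $f^s(z)=F(z)\overline{F(\bar z)}+G(z)\overline{G(\bar z)}$, whence on $|z|=1$ the triangle inequality followed by Cauchy--Schwarz gives $|f^s(z)|\le|f(z)|\,|f(z^{-1})|$; integrating against the symmetric weight $\sin^2\theta$ and applying $\theta\mapsto-\theta$ yields $\mH(f^s)\le 2\,\mH(f)$ slice by slice. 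In this alternative the obstacle again concentrates in the equality case, namely in showing that tightness of Cauchy--Schwarz on almost every slice forces $G\equiv0$ across slices, i.e.\ real coefficients.
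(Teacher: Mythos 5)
Your primary route for the inequality breaks at the step where you ``apply the additivity $\mH(P\ast Q)=\mH(P)+\mH(Q)$'' across the $\ast$-factorization. That identity is false for general slice regular factors, and the paper says so explicitly: the additivity quoted in the introduction concerns the \emph{pointwise} non-commutative product (where it is trivial, since $|P(x)Q(x)|=|P(x)|\,|Q(x)|$), while Theorem~\ref{thm:pq} gives $\ast$-additivity only when one factor is real, precisely because only then does the $\ast$-product coincide with the pointwise product. The remark following Theorem~\ref{thm:pq} gives counterexamples: $\mH((x-i)\ast(x+i))=\mH(x^2+1)=-\tfrac{1}{2}$ while $\mH(x-i)+\mH(x+i)=\tfrac{1}{2}$, and $\mH((x-i)^{\ast2})=\tfrac{5}{6}\neq 2\,\mH(x-i)=\tfrac{1}{2}$. (The source of the failure is Lemma~\ref{lem:pointwiseprod}: $|(f\ast g)(p)|=|f(p)|\,|g(f(p)^{-1}pf(p))|$, so the argument of $g$ gets displaced.) Note that in these two examples the error has opposite signs, so no one-sided version of the additivity can rescue your reduction to single factors. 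Your auxiliary computations --- $(f\ast g)^s=f^sg^s$, $(x-q)^s=x^2-2\RE q\cdot x+|q|^2$, and the factorwise inequality from Theorem~\ref{thm:Jensenformulas}(1)--(2) --- are all correct, but they cannot be assembled through the false step.

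Your fallback, by contrast, \emph{is} the paper's actual proof and should be promoted to the main argument: split $P=F+GJ$ on each slice (Lemma~\ref{lem:splitlemma}), use $P^s(z)=F(z)\overline{F(\overline z)}+G(z)\overline{G(\overline z)}$ and Cauchy--Schwarz to get $|P^s(z)|\le|P(z)|\,|P(\overline z)|$, then integrate against the $\theta\mapsto-\theta$ symmetric weight $\sin^2\theta$. The equality analysis you flag as the residual obstacle is resolved in the paper via the Identity Principle (Lemma~\ref{lem:identityprcp}): pointwise tightness of Cauchy--Schwarz gives proportionality $\overline{F(\overline z)}=\lambda F(z)$, $\overline{G(\overline z)}=\overline{\lambda}G(z)$ with $\lambda$ constant on each slice, and varying $I,J$ is then argued to force $\lambda=1$ and $G\equiv0$, i.e.\ $P(z)=\overline{P(\overline z)}$. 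Incidentally, the caveat you reached in your first route --- that equality can only pin $P$ down ``up to its leading constant'' --- is sound: for $P=Qc$ with $Q$ real and $c$ a non-real constant, one has $|P|=|Q|\,|c|$ pointwise and $P^s=|c|^2Q^2$, so equality holds even though $P$ is not literally a real polynomial (on each slice one finds $\lambda=-1$ when $c=i$, for instance). Any complete equality analysis must account for this constant factor, which the stated ``if and only if'' glosses over.

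On existence your route is genuinely different from the paper's, but it contains an unproved step: the lower bound $\log|P(x)|\gtrsim\log\operatorname{dist}(x,Z)$ does not follow from the structure of the zero set alone, since a smooth function with the same zero set can vanish to infinite order. What makes it true is that $|P(x)|^2$ is a real polynomial in the four real coordinates of $x$, so a {\L}ojasiewicz inequality $|P(x)|^2\ge C\operatorname{dist}(x,Z)^{\alpha}$ holds on the compact $\mbt^1(\mbh)$; you would need to invoke or prove this. The paper avoids it altogether: Lemma~\ref{lem:measurebound} transports the Everest--Ward measure bound (Lemma~\ref{lem:EW}) from the real polynomial $P^s$ to $P$ via the same Cauchy--Schwarz inequality $|P^s(q)|\le|P(q)|\,|P(\overline q)|\le M|P(q)|$, yielding $\mu_{\mbh}(\{|P|\le\varepsilon\})\le C\varepsilon^{\delta}$ and hence convergence of the improper integral. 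Both approaches work, but the paper's is self-contained given Lemma~\ref{lem:EW}, and it reuses the very inequality that drives the second half of the theorem.
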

This theorem has also been extended to slice regular polynomials in two-variable cases, see Section~\ref{sec7}.

Our next results concern the \emph{$\ast$-products} of quaternionic monomials (see Section~\ref{sec:sliceregular} for definition).
\begin{thm}\label{thm:iteratedi}
Let $n\geq1$ and let $(x-i)^{*n}$ denote the iterated $\ast$-product. Then we have
    \[m_{\mbh}((x-i)^{*n})=\frac{2n}{\pi}\int_0^{\infty}t\coth nt\tanh^2t\sech t\dif t-\frac{2+n}{4}.\]
\end{thm}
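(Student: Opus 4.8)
The plan is to collapse the four-dimensional integral over $\mbt^1(\mbh)=S^3$ to a one-dimensional integral by exploiting the slice structure of $(x-i)^{*n}$, and then to evaluate that integral through a hyperbolic substitution. The first and decisive step is to obtain a closed form for the modulus $\abs{(x-i)^{*n}(q)}$. Since all coefficients of $(x-i)^{*n}$ lie in the slice $\C_i$, on $\C_i$ the $\ast$-product coincides with the ordinary product, so the restriction of $(x-i)^{*n}$ to $\C_i$ is simply $(z-i)^n$. Writing a generic unit quaternion as $q=x+yI$ with $I\in S^2$, $x^2+y^2=1$, $y\ge 0$, the representation formula for slice regular functions gives
\[
(x-i)^{*n}(x+yI)=\tfrac12(1-Ii)\,(x+(y-1)i)^n+\tfrac12(1+Ii)\,(x-(y+1)i)^n .
\]
Expanding $\abs{\cdot}^2$ and using the identity $Ii+iI=-2\langle I,i\rangle$ for imaginary units, the cross terms collapse and the direction-dependence enters only through the single scalar $\cos\psi=\langle I,i\rangle$, yielding
\[
\abs{(x-i)^{*n}(x+yI)}^2=\tfrac12\big[(1+\cos\psi)(x^2+(y-1)^2)^n+(1-\cos\psi)(x^2+(y+1)^2)^n\big].
\]

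Next I would introduce the polar coordinates $q=\cos\theta+\sin\theta\,u$ on $S^3$, for which the normalized Haar measure is $\tfrac{1}{2\pi^2}\sin^2\theta\,\dif\theta\,\dif\sigma(u)$ with $u\in S^2$. Here $x=\cos\theta$, $y=\sin\theta$, so $x^2+(y\mp1)^2=2\mp2\sin\theta$ and $\cos\psi=\langle u,i\rangle$. As $u$ ranges over $S^2$, the quantity $t=\langle u,i\rangle$ is uniformly distributed on $[-1,1]$, so the integral over $u$ reduces, up to explicit constants coming from the factor $2^{n-1}$, to the elementary integral $\int_{-1}^{1}\log\big((1+t)a+(1-t)b\big)\,\dif t$ with $a=(1-\sin\theta)^n$ and $b=(1+\sin\theta)^n$. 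Carrying out this integral produces, again up to explicit constants, the term $\tfrac{a\log a-b\log b}{a-b}$ to be integrated against $\sin^2\theta\,\dif\theta$.

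Finally, using the symmetry $\theta\mapsto\pi-\theta$ to restrict to $[0,\pi/2]$, I would apply the Gudermannian substitution $\sin\theta=\tanh t$, $\cos\theta=\sech t$, so that $\sin^2\theta\,\dif\theta=\tanh^2 t\,\sech t\,\dif t$ and $\theta:0\to\pi/2$ becomes $t:0\to\infty$. Since $1\pm\sin\theta=e^{\pm t}\sech t$, a short computation gives $\tfrac{a\log a-b\log b}{a-b}=n\big(t\coth(nt)-\log\cosh t\big)$. The term $t\coth(nt)$ produces exactly the asserted main term $\tfrac{2n}{\pi}\int_0^\infty t\coth(nt)\tanh^2 t\,\sech t\,\dif t$, while $-\log\cosh t=\log\cos\theta$ collects, together with the constants accumulated above, into $-\int_0^{\pi/2}\sin^2\theta\,\log\cos\theta\,\dif\theta$. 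This last integral I would evaluate by differentiating the Beta integral $\int_0^{\pi/2}\cos^a\theta\,\dif\theta$ in the parameter $a$ and inserting standard digamma values at $a=0,2$, obtaining $\tfrac{\pi}{4}\log2+\tfrac{\pi}{8}$; assembling all additive constants then yields the stated $-\tfrac{2+n}{4}$.

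I expect the main obstacle to be the opening step: deriving and simplifying the closed form for $\abs{(x-i)^{*n}(x+yI)}^2$, since it requires handling the non-commutativity in the representation formula carefully so that the dependence on the slice direction $I$ reduces to the single scalar $\cos\psi$. Once that formula is established, the passage to the one-dimensional integral and the hyperbolic substitution are essentially mechanical, and the only remaining delicate point is the bookkeeping of the additive constants so that they reassemble correctly into $-\tfrac{2+n}{4}$.
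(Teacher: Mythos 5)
Your proposal is correct and follows essentially the same route as the paper's proof: the Representation Formula to express $(x-i)^{*n}$ off the slice $\C_i$, reduction to an integral over the angle $\psi$ between the slice direction and $i$ (the paper's Proposition~\ref{prop:pxtheta}), exact evaluation of that angular integral (the paper's identity~\eqref{eq:logabtheta}, equivalent to your $\int_{-1}^{1}\log((1+t)a+(1-t)b)\,\dif t$), and finally the substitution $\sin\theta=\tanh t$ (the paper's $r=\tanh t$), with your $(1\pm\sin\theta)^n$ matching the paper's $A^{+}(r)\pm A^{-}(r)$. The only differences are cosmetic bookkeeping choices, e.g.\ how the additive constant $-\tfrac{n+2}{4}$ is extracted.
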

In particular, this leads to the following
\[m_{\mbh}(x^2-2xi-1)=\frac{5}{6},\]
\[m_{\mbh}(x^3-3x^2i-3x+i)=\frac{31}{4}-2\pi.\]

In the following theorem, we examine examples of Mahler measures for non-commutative linear polynomials in the Sylvester form.
\begin{thm}\label{thm:ab}
    Let $a,b\in \mbh$ be non-zero and let $L(x)=ax+xb$ be not identically zero. Then 
    \[m_{\mbh}(L)=\frac{1}{2}\left(\frac{\lambda^{+}\log\lambda^{+}-\lambda^{-}\log \lambda^{-}}{\lambda^{+}-\lambda^{-}}-1\right),\]
    where we put   
$
\lambda^{\pm}=(\RE a+\RE b)^2+(|\!\IM a|\pm|\!\IM b|)^2.
$
\end{thm}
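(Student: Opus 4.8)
The plan is to reduce the four-dimensional integral to a one-variable average of $\log$ against a uniform density, by first understanding the function $x\mapsto|ax+xb|^2$ on the unit sphere $\mbt^1(\mbh)=S^3$. Throughout I may assume $\IM a\neq 0$ and $\IM b\neq 0$; if either imaginary part vanishes then $\lambda^+=\lambda^-$ and $|L|^2$ is constant, so the formula is to be read as its limiting value and the integral is immediate. First I would expand, using $x\bar x=1$ for $|x|=1$,
\[
\abs{ax+xb}^2=(ax+xb)\overline{(ax+xb)}=\abs{a}^2+\abs{b}^2+2\RE\!\big(a\,x\bar b\,\bar x\big),
\]
the two cross terms being conjugate to one another. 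Writing $a=\RE a+\IM a$ and $b=\RE b+\IM b$, and noting that $x\bar b\,\bar x=x\bar b\,x^{-1}$ is the conjugation of $\bar b$ by the unit quaternion $x$ (which fixes the real part and rotates the imaginary part), I would simplify
\[
\RE\!\big(a\,x\bar b\,x^{-1}\big)=\RE a\cdot\RE b+\big\langle \IM a,\;x\,\IM b\,\bar x\big\rangle,
\]
where $\langle\cdot,\cdot\rangle$ is the Euclidean inner product on the pure quaternions $\cong\mathbb{R}^3$. Collecting terms yields the clean identity
\[
\abs{ax+xb}^2=(\RE a+\RE b)^2+\abs{\IM a}^2+\abs{\IM b}^2+2\big\langle \IM a,\,x\,\IM b\,\bar x\big\rangle,
\]
so that the entire $x$-dependence sits in the single inner product $\langle \IM a,\,x\,\IM b\,\bar x\rangle$.

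The crucial step is to identify the distribution of this inner product as $x$ runs over $S^3$ with Haar measure. The assignment $x\mapsto\big(v\mapsto xv\bar x\big)$ is the standard two-to-one cover $S^3\to\mathrm{SO}(3)$, under which the Haar measure of $S^3$ pushes forward to the Haar measure of $\mathrm{SO}(3)$; since $\mathrm{SO}(3)$ acts transitively on the sphere of radius $\abs{\IM b}$ in the pure quaternions, the vector $x\,\IM b\,\bar x$ is uniformly distributed on that sphere. By the Archimedes hat-box theorem, the normalized inner product $\langle\IM a,\,x\,\IM b\,\bar x\rangle/(\abs{\IM a}\,\abs{\IM b})=\cos\theta$ is then uniform on $[-1,1]$. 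Consequently $\abs{ax+xb}^2$ is uniformly distributed on $[\lambda^-,\lambda^+]$, since
\[
\lambda^{\pm}=(\RE a+\RE b)^2+(\abs{\IM a}\pm\abs{\IM b})^2
\]
are precisely the values attained at $\cos\theta=\pm 1$.

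Finally, using $\log\abs{L}=\tfrac12\log\abs{L}^2$ together with the uniform law just established, the Mahler measure collapses to a one-variable integral
\[
m_{\mbh}(L)=\frac12\int_{\mbt^1(\mbh)}\log\abs{ax+xb}^2\,\dif\mu=\frac{C}{\lambda^+-\lambda^-}\int_{\lambda^-}^{\lambda^+}\log u\,\dif u,
\]
where $C$ is the constant fixed by the normalization of $\dif\mu(\mbt^1(\mbh))$. I would evaluate this with the antiderivative $u\log u-u$, which reproduces exactly the bracketed factor
\[
\frac{\lambda^{+}\log\lambda^{+}-\lambda^{-}\log \lambda^{-}}{\lambda^{+}-\lambda^{-}}-1
\]
of the statement. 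The main obstacle is the distributional claim of the previous paragraph—pinning down the pushforward of Haar measure on $S^3$ under quaternionic conjugation and invoking the hat-box theorem; once this is secured, everything else is an elementary symmetry reduction and a single logarithmic integral. I would also double-check the overall prefactor against the already-established Jensen formula of Theorem~\ref{thm:Jensenformulas}(1), which fixes the normalization of the measure unambiguously.
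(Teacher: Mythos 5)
Your proposal is correct, and it takes a genuinely different route from the paper's. The paper argues by linear algebra: writing $\abs{ax+xb}^2=v^TPv$ with $v=(x_1,x_2,x_3,x_4)^T$, it quotes an eigenvalue computation (Lemma~\ref{lem:eigenofx}, citing Rodman) showing that $P$ has the two eigenvalues $\lambda^{\pm}$, each with multiplicity two; an orthogonal change of variables, under which the sphere measure is invariant, then reduces $\mH(L)$ to an integral of $\log\bigl(\lambda^{+}\abs{z}^2+\lambda^{-}\abs{w}^2\bigr)$ over $\SU(2)$, which is evaluated as an elementary integral over the unit disk. You instead exploit the group structure directly: the identity $\abs{ax+xb}^2=(\RE a+\RE b)^2+\abs{\IM a}^2+\abs{\IM b}^2+2\langle \IM a,\,x\,\IM b\,\overline{x}\rangle$, the pushforward of Haar measure under the conjugation cover $\mbt^1(\mbh)\to\mathrm{SO}(3)$, and Archimedes' hat-box theorem, to conclude that $\abs{L(x)}^2$ is \emph{uniformly distributed} on $[\lambda^{-},\lambda^{+}]$. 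The two arguments rest on the same underlying fact --- for a Haar-random element of $\SU(2)$ the coordinate $\abs{w}^2$ is uniform on $[0,1]$, itself an Archimedes-type statement --- and both end with the same logarithmic integral; yours avoids the eigenvalue lemma and the diagonalization entirely and makes the law of $\abs{L}^2$ transparent, while the paper's quadratic-form setup is the one that would still be available for more general real-linear operators $\sum_i p_ixq_i$, whose spectrum is not this degenerate.

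One substantive point, where your caution about the constant pays off: carried out with the probability Haar measure, your computation gives
\[
\mH(L)=\frac{1}{2}\left(\frac{\lambda^{+}\log\lambda^{+}-\lambda^{-}\log\lambda^{-}}{\lambda^{+}-\lambda^{-}}-1\right),
\]
that is, your constant $C$ equals $\tfrac12$, not the $2\pi^2$ appearing in the statement. The stated prefactor cannot be right: for $a=b=1$ one has $L(x)=2x$ and $\mH(L)=\log 2$, while the printed formula, in its limiting form with $\lambda^{+}=\lambda^{-}=4$, would give $2\pi^2\log 4$; likewise, taking $b=-a$, the prefactor $\tfrac12$ reproduces the $c\to0$ limit of Theorem~\ref{thm:a=b} (which is proved with the correct normalization), whereas $2\pi^2$ does not. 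The source of the discrepancy is in the paper's own proof, whose second displayed line drops the normalizing factor $\frac{1}{4\pi^2}$ of the Haar measure on $\SU(2)$; restoring it converts the $2\pi^2$ into $\tfrac12$. So your argument is not only a valid alternative proof, it also detects a normalization error in the theorem as stated.
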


Additionally, we provide several instances of Mahler measures in multiple variables. As for Smyth's example $1+x+y$, its quaternionic Mahler measure is as follows.

\begin{thm}\label{thm:Smyth}
    We have 
    \[m_{\mbh}(1+x+y)=\frac{1}{2}-\frac{11\sqrt{3}}{16\pi}+\frac{3\sqrt{3}}{4\pi}L(\chi_{-3},2).\]
\end{thm}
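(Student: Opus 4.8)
The plan is to compute $m_{\mbh}(1+x+y)$ by reducing the quaternionic integral over $\mbt^2(\mbh)$ to a more tractable form, exploiting the rotational symmetry of the Haar measure on the unit quaternion sphere. The key structural observation I would use is that $\mbt^1(\mbh)$ is the unit $3$-sphere $S^3$, and writing each quaternion as $x=\RE x + \IM x$, the integrand $\log|1+x+y|$ depends on $x,y$ only through the real parts and the combined imaginary part. Since the Haar measure on $S^3$ is invariant under the action $x\mapsto uxu^{-1}$, I would first average over this conjugation action: this collapses the dependence on the imaginary directions, leaving an integral whose variables are essentially the real parts $r_x=\RE x$, $r_y=\RE y$ (each distributed on $[-1,1]$ with a semicircle-type density coming from the pushforward of Haar measure) together with the angle between the two imaginary parts.

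The first concrete step would be to establish the pushforward density. Parametrizing $S^3$ so that $\RE x = \cos\theta$, the Haar measure pushes forward to a density proportional to $\sin^2\theta\,\dif\theta$ on the real part, i.e. the semicircle law on $[-1,1]$; this is exactly the $|\alpha|^2/4$ and $\cos 2\phi$ structure already visible in Theorem~\ref{thm:Jensenformulas}, so I would reuse those computations. Next I would fix the frame: after conjugation I may assume $\IM x$ and $\IM y$ lie in a common plane, reducing $|1+x+y|^2 = (1+r_x+r_y)^2 + |\IM x + \IM y|^2$, where $|\IM x + \IM y|^2 = \sin^2\theta_x + \sin^2\theta_y + 2\sin\theta_x\sin\theta_y\cos\psi$ with $\psi$ the relative angle, itself uniformly distributed. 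So the problem becomes a three-fold real integral
\[
m_{\mbh}(1+x+y)=\frac{1}{2}\int\!\!\int\!\!\int \log\bigl((1+r_x+r_y)^2+\rho^2\bigr)\,\dif\nu,
\]
with $\rho^2$ as above and $\dif\nu$ the explicit product of semicircle densities and the uniform angle.

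The middle step is to perform the $\psi$-integration first, using a Jensen-type formula: for fixed $r_x,r_y,\theta_x,\theta_y$, integrating $\log$ of a quadratic in $\cos\psi$ over the circle gives a closed form via $\frac{1}{2\pi}\int_0^{2\pi}\log|A+B e^{i\psi}|^2\,\dif\psi = \log\max(|A|,|B|)^2$. This should convert the innermost integral into a piecewise-logarithmic expression, after which I would integrate over the two real parts. I expect that the result will naturally split into a rational/algebraic contribution (producing the $\tfrac12 - \tfrac{11\sqrt3}{16\pi}$ terms) and a dilogarithmic contribution that, via the standard Bloch--Wigner/Smyth manipulation, collapses onto the Dirichlet $L$-value $L(\chi_{-3},2)$ exactly as in the classical case; the classical Smyth identity would enter as the ``regular'' piece of the computation, while the extra rational terms are the genuinely quaternionic correction coming from the semicircle weighting rather than the uniform angular measure.

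\textbf{The main obstacle} I anticipate is organizing the final multiple integral so that the $L$-value actually emerges rather than some opaque combination: the presence of two independent semicircle densities (from $r_x$ and $r_y$) rather than the single uniform angular variable of the complex case means the real two-dimensional integration is genuinely harder, and I expect the delicate point to be matching the dilogarithm that arises from the $(1+r_x+r_y)^2+\rho^2$ integration against Smyth's $L(\chi_{-3},2)$, including pinning down the exact rational coefficients $\tfrac12$, $-\tfrac{11\sqrt3}{16\pi}$, $\tfrac{3\sqrt3}{4\pi}$. I would handle this by reducing, after the angular integration, to the region where $1+x+y$ genuinely vanishes (the locus governing the $L$-value, analogous to the arc where $|1+e^{i\theta}|<1$ in Smyth's proof) and carefully tracking the semicircle-weighted contribution from the complementary region, which should supply the algebraic terms.
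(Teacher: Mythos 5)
Your setup is correct as far as it goes: $|1+x+y|^2=(1+\RE x+\RE y)^2+|\!\IM x+\IM y|^2$, and the pushforward of Haar measure to the real part is indeed the semicircle density $\tfrac{2}{\pi}\sin^2\theta\,\dif\theta$. The proposal breaks, however, at the distributional claim for the relative angle. For independent Haar-random $x,y\in\mbt^1(\mbh)$, the unit imaginary directions $\IM x/|\!\IM x|$ and $\IM y/|\!\IM y|$ are independent and uniform on the $2$-sphere, so the angle $\psi$ between them is \emph{not} uniformly distributed: its density is $\tfrac12\sin\psi\,\dif\psi$ on $[0,\pi]$, equivalently $\cos\psi$ is uniform on $[-1,1]$. (Uniformity of $\psi$ is the complex, $S^1$, picture; here the relative direction lives on $S^2$.) This kills your middle step, because the circular Jensen evaluation $\frac{1}{2\pi}\int_0^{2\pi}\log|A+Be^{i\psi}|^2\,\dif\psi=\log\max(|A|,|B|)^2$ is simply not the integral you need to do. With the correct weight, writing $c=(1+r_x+r_y)^2+\sin^2\theta_x+\sin^2\theta_y$ and $d=\sin\theta_x\sin\theta_y$, the $\psi$-average of $\log|1+x+y|$ is
\[
\int_0^{\pi}\tfrac12\log\bigl(c+2d\cos\psi\bigr)\,\tfrac12\sin\psi\,\dif\psi
=\frac12\left(\frac{\Lambda^{+}\log\Lambda^{+}-\Lambda^{-}\log\Lambda^{-}}{\Lambda^{+}-\Lambda^{-}}-1\right),
\qquad
\Lambda^{\pm}=(1+r_x+r_y)^2+(\sin\theta_x\pm\sin\theta_y)^2,
\]
i.e.\ the $\lambda\log\lambda$ structure of \eqref{eq:logabtheta} and Theorem~\ref{thm:ab}, not a $\log\max$. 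You are then left with a genuinely two-dimensional integral of this expression against two semicircle densities, and nothing in the proposal shows how the rational terms $\tfrac12-\tfrac{11\sqrt3}{16\pi}$ or the $L$-value would emerge from it; the concluding appeal to Smyth-style region splitting presupposes exactly the piecewise-$\log\max$ form that the wrong distribution gave you.

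For comparison, the paper sidesteps this entirely with one preliminary move you did not make: by right-invariance of Haar measure under $y\mapsto yx$, $m_{\mbh}(1+x+y)=m_{\mbh}(1+(1+y)x)$, which is \emph{linear in $x$} with quaternionic coefficient $1+y$. The full three-dimensional $x$-integral is then evaluated in closed form by the quaternionic Jensen formula, Theorem~\ref{thm:Jensenformulas}(1), and similarity invariance (as in Proposition~\ref{prop:pxtheta}) collapses the $y$-integral to a single integral in $\theta$ with weight $\sin^2\theta$, split at $|1+e^{i\theta}|=1$, i.e.\ $\theta=\tfrac{2\pi}{3}$; the dilogarithm producing $L(\chi_{-3},2)$ arises only in that one-dimensional integral. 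If you want to salvage your route you must honestly evaluate the corrected two-dimensional integral above, which is a substantially harder computation than the one the paper performs.
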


\begin{rmk}
  The formula here can be compared with Smyth's formula on $m_{\mathbb{C}}(1+x+y)$ and Lal\'{\i}n--Roy's~\cite{LR1} evaluation of the areal Mahler measure $m_{\mathbb{D}}(1+x+y)$. We have indeed the following relation
\[m_{\mathbb{H}}(1+x+y)-m_{\mathbb{D}}(1+x+y)=\frac{1}{3}.\]
The resemblance of the Jensen formulas of monomials in the complex case, the areal case and the quaternionic case might serve as a foundational explanation.
\end{rmk}

Generalizing the argument of Maillot~\cite{Mai} to the quaternionic setting, we also derive the following formula.

\begin{thm}\label{thm:Maillot}
    Let $a,b,c\in \mbh$ be non-zero. If $|a|,|b|,|c|$ do not form a triangle, then we have
    \[m_{\mbh}(ax+by+c)=\log\max\{|a|,|b|,|c|\}+\frac{|a|^2+|b|^2+|c|^2}{4\max\{|a|,|b|,|c|\}^2}-\frac{1}{4}.\]
\end{thm}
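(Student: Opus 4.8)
The plan is to imitate Maillot's reduction to one variable via the quaternionic Jensen formula of Theorem~\ref{thm:Jensenformulas}(1), after first exploiting a hidden $S_3$-symmetry to place the largest modulus on a variable coefficient (which makes the remaining integral elementary).

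\emph{Step 1 (symmetry).} First I would show that $m_{\mbh}(ax+by+c)$ is invariant under every permutation of $(a,b,c)$. Swapping $a\leftrightarrow b$ is the obvious exchange $x\leftrightarrow y$, which preserves the product Haar measure. To swap $b\leftrightarrow c$, use that $|y|=1$ gives $\log|ax+by+c|=\log|(ax+by+c)y^{-1}|=\log|axy^{-1}+b+cy^{-1}|$, and then change variables $(x,y)\mapsto(z,w)=(xy^{-1},y^{-1})$. Since the Haar measure on the group of unit quaternions is bi-invariant and inversion-invariant, $(z,w)$ is again a pair of independent Haar-distributed unit quaternions, so the integral becomes $m_{\mbh}(az+cw+b)$. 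These two transpositions generate $S_3$, so we may assume $|a|=M:=\max\{|a|,|b|,|c|\}$; the failure of the triangle inequality then reads $|a|\ge|b|+|c|$. (The degenerate cases in which some coefficient vanishes reduce directly to the one-variable formula, so I assume $a\neq0$.) In particular $|ax|=|a|\ge|by+c|$ for all $(x,y)\in\mbt^2(\mbh)$, so $ax+by+c$ never vanishes and the integrand is bounded and continuous.

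\emph{Step 2 (integrate out $x$).} For fixed $y$ I write $ax+by+c=a\bigl(x-\alpha\bigr)$ with $\alpha=-a^{-1}(by+c)$, so that $|\alpha|=|by+c|/|a|\le(|b|+|c|)/|a|\le1$. Hence the first branch of Theorem~\ref{thm:Jensenformulas}(1) applies uniformly in $y$, and
\[\int_{\mbt^1(\mbh)}\log|ax+by+c|\,\dif\mu(x)=\log|a|+m_{\mbh}(x-\alpha)=\log|a|+\frac{|by+c|^2}{4|a|^2}.\]
It remains to compute $\int_{\mbt^1(\mbh)}|by+c|^2\,\dif\mu(y)$. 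Expanding, $|by+c|^2=|b|^2+|c|^2+2\RE(by\overline{c})$, and since $y\mapsto\RE(by\overline{c})$ is $\mathbb{R}$-linear while $\int_{\mbt^1(\mbh)}y\,\dif\mu(y)=0$ by the antipodal symmetry of the Haar measure on the unit sphere, the cross term integrates to $0$. Therefore the integral equals $|b|^2+|c|^2$, and assembling,
\[m_{\mbh}(ax+by+c)=\log|a|+\frac{|b|^2+|c|^2}{4|a|^2}=\log M+\frac{|a|^2+|b|^2+|c|^2}{4M^2}-\frac14,\]
which is the claimed formula.

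\emph{Main obstacle.} The final algebra is routine; the two points requiring care are the non-commutative change of variables in Step 1 (where bi-invariance and inversion-invariance of the quaternionic Haar measure must be invoked) and the observation that the non-triangle hypothesis forces the Jensen branch to be constant across the whole torus. I would stress that the symmetry of Step 1 is not merely cosmetic: had I instead left the maximum on the constant $c$, the reduction would land in the second branch of Theorem~\ref{thm:Jensenformulas}(1) and Step 2 would require the harder spherical integral $\int_{\mbt^1(\mbh)}|by+c|^{-2}\,\dif\mu=|c|^{-2}$, which one evaluates through the $\sin^2$-distribution of $\RE(y)$ on the unit sphere together with a standard trigonometric integral; routing the maximum onto a variable coefficient sidesteps this computation entirely.
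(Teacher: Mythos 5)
Your proposal is correct, and it reaches the formula by a genuinely different route than the paper. The paper first reduces to positive real coefficients by the similarity/rotation argument of Theorem~\ref{thm:Jensenformulas}, assumes $a\geq b\geq c$, substitutes $y\mapsto yx$ so that the polynomial becomes $(a+by)x+c$, applies the Jensen formula in $x$ to get $\log|a+by|+\tfrac{c^2}{4|a+by|^2}$, and then evaluates the two resulting $y$-integrals separately: the logarithmic one by Lemma~\ref{lem:jenformula} again, and the reciprocal-square one by the zeta Mahler measure Theorem~\ref{thm:zetameaofx} at $s=-2$ --- exactly the ``harder spherical integral'' $\int_{\mbt^1(\mbh)}|by+c|^{-2}\dif\mu=|c|^{-2}$ that you point out your normalization avoids. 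Your version instead keeps quaternionic coefficients throughout (legitimate, since Theorem~\ref{thm:Jensenformulas}(1) is stated for arbitrary $\alpha\in\mbh$, here $\alpha=-a^{-1}(by+c)$), and after the inner Jensen step needs only the elementary identity $\int_{\mbt^1(\mbh)}|by+c|^2\,\dif\mu(y)=|b|^2+|c|^2$, which follows from $\int_{\mbt^1(\mbh)}y\,\dif\mu(y)=0$; this bypasses Theorem~\ref{thm:zetameaofx} entirely and needs Jensen only once. Moreover, your Step 1 is not merely cosmetic relative to the paper: the paper's ``without loss of generality $a\geq b\geq c$'' silently permutes the constant term with the coefficients of the variables, and that interchange is justified precisely by the kind of measure-preserving change of variables $(x,y)\mapsto(xy^{-1},y^{-1})$ that you carry out, so your proof makes explicit a symmetry the paper leaves implicit (the paper's written computation only covers the case where the maximum sits on a variable coefficient). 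What the paper's route buys in exchange is a normalization to real coefficients that it reuses elsewhere, and an illustration of its zeta Mahler measure theorem in action.

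Two small points of precision. First, your parenthetical claim that $ax+by+c$ ``never vanishes'' requires the strict inequality $|a|>|b|+|c|$; in the degenerate case $|a|=|b|+|c|$ the polynomial can vanish on $\mbt^2(\mbh)$. This is harmless: the first branch of Theorem~\ref{thm:Jensenformulas}(1) covers all $|\alpha|\leq 1$, and integrability of the logarithmic singularity (Theorem~\ref{thm:existence}, or Lemma~\ref{lem:measurebound}) keeps the Fubini step valid, but the justification should not rest on non-vanishing. Second, when you pull out $\log|a|$ you are using multiplicativity of the quaternionic norm, $\abs{a(x-\alpha)}=\abs{a}\,\abs{x-\alpha}$; since non-commutativity is the whole subtlety of this setting, it is worth saying so explicitly.
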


This article is structured as follows. Section \ref{sec2} presents preliminary properties on quaternions. In Section \ref{sec:sliceregular}, a concise introduction to quaternion slice regular functions is provided. The quaternion Mahler measure of real  polynomials is computed in Section \ref{sec4}, with a generalization to slice regular polynomials explored in Section \ref{sec:mahmeaofleftpol}. Section \ref{sec6} offers examples of non-commutative univariate polynomials, while Section \ref{sec7} provides some examples of multivariable quaternion Mahler measures. Finally, we introduce the quaternion analogue of the Lehmer problem in Section \ref{sec8}, along with numerical evidence.

\section*{Acknowledgements}
The authors thank Fran\c{c}ois Brunault and Matilde Lal\'{\i}n for helpful discussions. The authors are also grateful to the anonymous referee for a careful reading of the manuscript and for valuable comments and suggestions that improved the exposition.

The first author is supported by  China
Postdoctoral Science Foundation (Grant No. 2024M763477), National Natural Science Foundation of China (Grant No. 1250012812). The second author is supported by Fundamental Research Funds for the Central Universities (Grant No. 531118010622), National Natural Science Foundation of China (Grant No. 1240011979) and Hunan Provincial Natural Science Foundation of China (Grant No. 2024JJ6120). 

\section{Preliminaries on quaternions}\label{sec2}
The quaternion algebra $\mbh$ is a real non-commutative algebra with the canonical basis $\{1,i,j,k\}$. A quaternion number $q$ in $\mbh$ can be written uniquely in the form $q=x_0+ix_1+jx_2+kx_3$, where $x_\ell$ are real numbers and $i, j, k$ share the multiplication rule
\[i^2=j^2=k^2=-1,\quad ij=k=-ji,\quad jk=i=-kj,\quad ki=j=-ik.\] Every quaternion number $q$ consists of two parts, the \emph{real part} $\RE q=x_0$ and the \emph{imaginary part} $\IM q=ix_1+jx_2+kx_3$. The \emph{conjugate} of $q$ is $\overline{q}=\RE q-\IM q$ and the \emph{norm} of $q$ is defined as $\abs{q}^2=q\overline{q}=x_0^2+x_1^2+x_2^2+x_3^2$. Denote $\S$ by the unit $2$-sphere of purely imaginary quaternions, that is,
\[\S=\{q=ix_1+jx_2+kx_3\in\mbh\,|\,x_1^2+x_2^2+x_3^2=1\},\]
these are exactly the quaternions $I$ such that $I^2=-1$. Then every quaternion number can be rewritten as $q=\alpha+I\beta$ for some $\alpha,\beta\in\R$ and $I\in\S$. 
For a fixed $I\in\S$, the set of all quaternions in this form is defined as a \emph{slice}, denoted by $\C_{I}=\R+\R I$. Note that the quaternion algebra $\mbh$ can be decomposed as the union of all the slices $\C_{I}$ where $I\in\S$
\[\mbh=\bigcup_{I\in\S}\C_{I},\quad \R=\bigcap_{I\in\S}\C_{I}.\]

Two quaternions $p,q\in\mbh$ are called \emph{similar} if there exists a non-zero $r\in\mbh$ such that $q=r^{-1}pr$. It is well known that two quaternions $p,q$ are similar precisely if $\RE p=\RE q$ and $\abs{\!\IM p}=\abs{\!\IM q}$. The set of all quaternions that are similar to $p$ forms therefore a $2$-sphere given by
\[\S_{p}\coloneqq\{q\in\mbh\,|\, q\text{ is similar to } p\}=\RE p+\abs{\!\IM p}\,\S.\]

Let $I,J\in\S$ such that $I\perp J$.  Then every quaternion can be written uniquely as $q=z+Jw\in \mbh$ with $z,w\in \mbc_{I}$. This gives the isomorphism 
\[\mbt^1(\mbh)\simeq \SU(2)=\bigg\{\begin{pmatrix}
    z&w\\-\overline{w}&\overline{z}
\end{pmatrix}\,\bigg{|}\,z,w\in\C\text{ such that }|z|^2+|w|^2=1\bigg\}.\]
The probability Haar measure on $\SU(2)$ is given by $\frac{1}{4\pi^2}\frac{\dif z}{z}\wedge \dif w\wedge \dif \overline{w}$. Thus, the quaternionic Mahler measure may be expressed as
\begin{equation}\label{eq:SU(2)}
    \mH(P)=\frac{1}{(4\pi^2)^{n}}\int_{{\SU(2)}^n}\log\abs{P(x_1,\dots,x_n)}\,\prod_{\ell=1}^{n}\frac{\dif z_\ell}{z_\ell}\dif w_\ell\dif \overline{w_\ell},
\end{equation}
where each $x_\ell=\begin{psmallmatrix}
    z_\ell&w_{\ell}\\-\overline{w}_{\ell}&\overline{z}_\ell
\end{psmallmatrix}$ is in $\SU(2)$.

The set of unit quaternions $\mbt^1(\mbh)$ is also isomorphic to the $3$-sphere $S^{3}$. This yields the formula
\begin{equation}\label{eq:hyperspherical}
\mH(P)=\int_{{(S^{3})}^{n}}\log\abs{P(x_1,\dots,x_n)}\,\mathrm{d}^{n}\mu(S^{3}).
\end{equation}
In particular, given an orthonormal basis $\{I,J,K=IJ\}$ in $\S$, then every unit quaternion $q=x_0+ x_1I+x_2J+x_3K$ admits a hyperspherical parametrisation: there exist angles $\theta,\theta_1$ and $\theta_2$ such that
\[\begin{cases}
x_0=\cos\theta,\\
x_1=\sin\theta\cos\theta_1,\quad \\
x_2=\sin\theta\sin\theta_1\cos\theta_2,\\
x_3=\sin\theta\sin\theta_1\sin\theta_2.
\end{cases}
\quad
\theta\in[0,\pi],\ \theta_1\in[0,\pi],\ \theta_2\in[0,2\pi).
\]
The Haar measure of $S^{3}$ is given by 
\[\dif \mu(S^{3})=\frac{1}{2\pi^2}\sin^2\theta\sin\theta_1\dif \theta\dif \theta_1\dif \theta_2.\]

\section{Quaternion slice regular functions}\label{sec:sliceregular}
In the sequel, we offer here a minimalistic introduction for slice regular functions. We will concentrate on functions within the open ball $B(0,R)$ of radius $R>0$ in $\mbh$, while the general theory can be found in the references mentioned above. For each $I\in \mbs$, let $B_I:=B\cap \C_I=B(0,R)\cap \mbc_I$ denote the slices of the set $B=B(0,R)$.
\begin{defn}
    Let $f$ be a quaternion-valued function defined on $B=B(0,R)$. For each $I\in \mbs$, let $f_I=f|_{B_I}$ be the restriction of $f$ on $B_I$. Then $f$ is called slice (left) regular if $f_I$ is holomorphic for all $I\in \mbs$, i.e. it has continuous partial derivatives and 
    \[\overline{\partial}_I f(x+yI)=\frac{1}{2}\left(\frac{\pa}{\pa x}+I\frac{\pa}{\pa y}\right)f_I(x+yI)=0.\]
\end{defn}

As is proved in~\cite{GS1}, a slice regular function $f$ on $B(0,R)$ has a quaternion series representation
\[f(q)=\sum_{n\in\N}q^n a_n,\quad\text{where }a_n\in\mbh.\]
The converse is also true, a quaternion series of the form $\sum_{n\in\N}q^na_n$ also defines a slice regular function within its domain of convergence. By splitting the quaternion coefficients by complex numbers, the following lemma associates the slice regular functions with complex holomorphic functions.
\begin{lem}[Splitting Lemma]\label{lem:splitlemma}
    Let $f$ be a slice regular function on $B(0,R)$. Let $I,J\in\S$ such that $I\perp J$. Then there exist two complex holomorphic functions $F,G:B_{I}\to\C_{I}$ such that for any $z=x+Iy$ in $B_{I}$, we have
    \[f_{I}(z)=F(z)+G(z)J.\]
\end{lem}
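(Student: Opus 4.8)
\emph{Proof proposal.} The plan is to build $F$ and $G$ directly from a real-linear decomposition of $\mbh$ adapted to the pair $I,J$, and then to read off holomorphy from the slice regularity equation. First I would set $K=IJ$ and record that $\{1,I,J,K\}$ is an orthonormal basis of $\mbh$ obeying the usual multiplication relations, so that every quaternion splits uniquely as an element of $\C_I$ plus an element of $\C_I$ times $J$ on the right; concretely $a+bI+cJ+dK=(a+bI)+(c+dI)J$. Applying this splitting pointwise to $f_I$ defines two functions $F,G\colon B_I\to\C_I$ by $f_I(z)=F(z)+G(z)J$, and since $f$ has continuous partial derivatives so do $F$ and $G$ (the two coordinate projections are real-linear and continuous).

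It then remains to show that $F$ and $G$ are holomorphic, which I would extract from $\overline{\partial}_I f=0$. Substituting $f_I=F+GJ$ into the operator $\frac{\partial}{\partial x}+I\frac{\partial}{\partial y}$, the constancy of $J$ lets me pull it to the right of every derivative, giving
$$\left(\frac{\partial}{\partial x}+I\frac{\partial}{\partial y}\right)(F+GJ)=\left(\frac{\partial F}{\partial x}+I\frac{\partial F}{\partial y}\right)+\left(\frac{\partial G}{\partial x}+I\frac{\partial G}{\partial y}\right)J.$$
Here the crucial algebraic point is that $\frac{\partial G}{\partial y}\in\C_I$ and $I\in\C_I$ commute, so $I\frac{\partial G}{\partial y}$ stays in $\C_I$ and the second summand genuinely lies in $\C_I J$. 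By uniqueness of the decomposition $\mbh=\C_I\oplus\C_I J$, the vanishing of $\overline{\partial}_I f$ forces both parenthesized expressions to vanish separately. Each of these is precisely the Cauchy--Riemann equation for a $\C_I$-valued function on $B_I$ under the identification of $\C_I$ with $\C$ sending $I$ to the imaginary unit, whence $F$ and $G$ are holomorphic.

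The step I expect to require the most care is the bookkeeping of the two multiplications in the displayed identity: $J$ must be kept on the right so that it never interacts with the left action of $I$ coming from the operator, while $I$ is free to commute with the $\C_I$-valued coefficients. It is exactly this asymmetry---$\C_I$ commutative, with $J$ anticommuting with $I$ but held constant on the right---that decouples the single quaternionic equation $\overline{\partial}_I f=0$ into two independent complex Cauchy--Riemann systems. Once this separation is justified, the remaining identification with holomorphy is routine.
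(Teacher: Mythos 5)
Your proof is correct and is exactly the standard argument for this lemma: decompose $\mbh=\C_I\oplus\C_I J$ via $K=IJ$, define $F,G$ as the two components of $f_I$, and observe that $\overline{\partial}_I f_I=0$ splits into the two Cauchy--Riemann systems for $F$ and $G$ because left multiplication by $I$ preserves each summand. Note that the paper itself offers no proof --- it quotes the Splitting Lemma from the slice regular function literature (Gentili--Struppa, cf.~\cite{GS1,GSS}) --- and your argument is precisely the proof given there, so there is nothing to reconcile.
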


Just as with holomorphic functions, an identity principle also holds for slice regular functions.

\begin{lem}[Identity principle]\label{lem:identityprcp}
Let $f$ and $g$ be two slice regular functions on $B(0,R)$. If there exists some $I\in\S$ such that the functions $f$ and $g$ coincide on a subset of $B_{I}$ with one accumulation point in $B_{I}$, then $f\equiv g$ holds on $B(0,R)$.
\end{lem}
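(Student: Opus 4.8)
The plan is to reduce the statement to the classical identity theorem for holomorphic functions of one complex variable, by combining the Splitting Lemma with the power series representation of slice regular functions. First I would set $h=f-g$. Since slice regularity is preserved under differences (the operator $\overline{\partial}_I$ is $\R$-linear, so $\overline{\partial}_I h_I=\overline{\partial}_I f_I-\overline{\partial}_I g_I=0$ on every slice), the function $h$ is again slice regular on $B(0,R)$, and by hypothesis it vanishes on a subset $E\subseteq B_I$ having an accumulation point in $B_I$. It therefore suffices to prove $h\equiv 0$.

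Next I would localize to the slice $\C_I$ via Lemma~\ref{lem:splitlemma}: choosing $J\in\S$ with $J\perp I$, write $h_I(z)=F(z)+G(z)J$ for holomorphic functions $F,G:B_I\to\C_I$. The key algebraic observation is that $\mbh=\C_I\oplus\C_I J$ is a direct sum of $\C_I$-subspaces, so the identity $F(z)+G(z)J=0$ on $E$ forces $F$ and $G$ to vanish separately on $E$. Since $B_I=B(0,R)\cap\C_I$ is an open disk, hence a connected domain in the complex plane $\C_I$, and since $E$ has an accumulation point in $B_I$, the classical identity theorem yields $F\equiv 0$ and $G\equiv 0$ on all of $B_I$. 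Consequently $h_I\equiv 0$; that is, $h$ vanishes on the entire slice $B_I$.

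The remaining and most delicate step is to propagate the vanishing from the single slice $B_I$ to the whole ball $B(0,R)$, which is where the genuinely non-commutative structure enters. For this I would invoke the power series representation $h(q)=\sum_{n\in\N}q^n a_n$ with $a_n\in\mbh$. Splitting each coefficient as $a_n=b_n+c_n J$ with $b_n,c_n\in\C_I$ and restricting to $z\in\C_I$, one obtains $h_I(z)=\left(\sum_{n}z^n b_n\right)+\left(\sum_{n}z^n c_n\right)J$, so that $F(z)=\sum_{n}z^n b_n$ and $G(z)=\sum_{n}z^n c_n$. Having already shown $F\equiv G\equiv 0$, the uniqueness of power series coefficients gives $b_n=c_n=0$, hence $a_n=0$ for every $n$, and therefore $h\equiv 0$ on $B(0,R)$.

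I expect the main obstacle to be precisely this globalization: complex analysis alone controls $h$ only on one slice, and the rigidity of a slice regular function across all slices must be extracted from the power series expansion. The rest is routine once the splitting $\mbh=\C_I\oplus\C_I J$ is used to separate the two holomorphic components and reduce to the one-variable identity theorem.
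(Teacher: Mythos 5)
Your proof is correct. The paper itself states this lemma without proof (it is a standard result quoted from the slice-regular literature, cf.~\cite{GSS}), and your argument --- passing to $h=f-g$, splitting $h_I=F+GJ$ on the slice, applying the classical one-variable identity theorem to $F$ and $G$ separately, and then killing all the power-series coefficients $a_n=b_n+c_nJ$ --- is exactly the standard proof of this statement, with all the delicate points (the direct sum $\mbh=\C_I\oplus\C_I J$, commutativity of $\C_I$ so that $z^nb_n$ is a genuine complex power series) handled properly. The only alternative worth noting is that the globalization step can also be done with the Representation Formula (Lemma~\ref{lem:repformula}): once $h\equiv 0$ on $B_I$, that formula expresses $h(x+yJ')$ for every $J'\in\S$ as a combination of $h(x+yI)$ and $h(x-yI)$, giving $h\equiv 0$ on $B(0,R)$ without invoking the series expansion; your coefficient argument is equally rigorous and buys the same conclusion.
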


The values of a slice regular function can be determined exactly by the values of its restrictions on a complex line. In particular, a holomorphic function $f_I:B_{I}\to \mbh$ admits a unique \emph{regular extension}  $\ext(f_{I})$ that has restriction $f_{I}$ on $B_I$.
\begin{lem}[Representation Formula]\label{lem:repformula}
    Let $f$ be a slice regular function on the ball $B=B(0,R)$ and let $I,J\in \mbs$. Then for all $x+yI\in B$ we have
    \[f(x+yJ)=\frac{1-JI}{2}f(x+yI)+\frac{1+JI}{2}f(x-yI).\]
Moreover, let $K\in\S$ and $I\neq J$, we have
\begin{align*}
    f(x+yK)&=(I-J)^{-1}[If(x+yI)-Jf(x+yJ)]\\
    &+K(I-J)^{-1}[f(x+yI)-f(x+yJ)].
\end{align*}

\end{lem}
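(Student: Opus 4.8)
The plan is to reduce both identities to the power series representation $f(q)=\sum_{n\in\N}q^n a_n$ with $a_n\in\mbh$, valid on $B(0,R)$, and to exploit the fact that the real and $I$-imaginary parts of $(x+yI)^n$ are polynomials in the \emph{real} variables $x,y$ that do not depend on the chosen unit. Concretely, I would fix real $x,y$ with $\sqrt{x^2+y^2}<R$ and first observe that $\abs{x+yL}^2=x^2+y^2$ for every $L\in\S$, so the points $x\pm yI$, $x+yJ$, $x+yK$ all lie in $B$ at once and all the series below converge absolutely. Expanding by the binomial theorem and using $L^2=-1$, I would write $(x+yL)^n=\alpha_n+L\,\beta_n$, where $\alpha_n=\alpha_n(x,y)$, $\beta_n=\beta_n(x,y)$ are the real and imaginary parts of the complex number $(x+iy)^n$ and are \emph{independent of} $L$. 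Setting $A\coloneqq\sum_n\alpha_n a_n$ and $B\coloneqq\sum_n\beta_n a_n$, this gives the single key identity
\[f(x+yL)=A+L\,B\qquad\text{for every }L\in\S.\]
This affine-in-$L$ representation is essentially the entire content of the lemma; everything after it is linear algebra over $\mbh$, and this is where slice regularity is genuinely used.

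For the first formula I would specialize to $L=I$ and $L=-I$, obtaining $f(x+yI)=A+IB$ and $f(x-yI)=A-IB$. Solving this $2\times 2$ system yields $A=\tfrac12\big(f(x+yI)+f(x-yI)\big)$ and $IB=\tfrac12\big(f(x+yI)-f(x-yI)\big)$. Substituting $L=J$ gives $f(x+yJ)=A+JB=A-(JI)(IB)$, and collecting the two terms produces exactly $\tfrac{1-JI}{2}f(x+yI)+\tfrac{1+JI}{2}f(x-yI)$.

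For the second formula I would instead specialize to $L=I$ and $L=J$ with $I\neq J$. Subtracting the two resulting equations gives $(I-J)B=f(x+yI)-f(x+yJ)$; since $I-J$ is a nonzero (hence invertible) purely imaginary quaternion when $I\neq J$, we get $B=(I-J)^{-1}\big(f(x+yI)-f(x+yJ)\big)$. A short computation using $I^2=J^2=-1$ shows $If(x+yI)-Jf(x+yJ)=(I-J)A$, so $A=(I-J)^{-1}\big(If(x+yI)-Jf(x+yJ)\big)$. Plugging these expressions for $A$ and $B$ into $f(x+yK)=A+KB$ yields precisely the displayed formula.

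The only genuine subtlety I anticipate is justifying the uniform representation $f(x+yL)=A+LB$: one must legitimately interchange the summation with the splitting into $L$-real and $L$-imaginary parts and use the $L$-independence of $\alpha_n,\beta_n$, which is exactly what the absolute convergence noted above secures. As an alternative route I would mention that one can avoid the power series by \emph{defining} $g(x+yJ)$ to be the right-hand side of the first identity, checking directly that $g$ is slice regular and that $g=f$ on the slice $\C_I$, and then invoking the Identity principle (Lemma~\ref{lem:identityprcp}); however, verifying slice regularity of $g$ by hand is more laborious than the direct series computation, so I would prefer the computational route above.
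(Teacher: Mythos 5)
Your proof is correct. Note that the paper itself does not prove this lemma at all: it is quoted as a known result from the slice-regularity literature (Gentili--Struppa, Gentili--Stoppato--Struppa), so there is no "paper proof" to match against; what matters is whether your argument stands on its own, and it does. Your route is the direct power-series computation: since the paper's setting is a ball $B(0,R)$ centered at the origin, the expansion $f(q)=\sum_{n\in\N}q^na_n$ is available, the identity $(x+yL)^n=\alpha_n(x,y)+L\,\beta_n(x,y)$ with $L$-independent real coefficients is elementary from $L^2=-1$, and absolute convergence of $\sum|x+iy|^n|a_n|$ justifies splitting the series to get $f(x+yL)=A+LB$ for every $L\in\S$ simultaneously. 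From there both displayed formulas are, as you say, quaternionic linear algebra (your verifications $f(x+yJ)=A-(JI)(IB)$ and $If(x+yI)-Jf(x+yJ)=(I-J)A$ are right, using only $I^2=J^2=-1$ and invertibility of the nonzero quaternion $I-J$). The standard textbook proof instead takes the route you relegate to a remark: define the right-hand side as a function of $x+yJ$, check it is slice regular, and conclude by the identity principle; that argument has the advantage of working on arbitrary axially symmetric slice domains where no global power series exists, whereas yours is shorter but genuinely tied to the ball. Since the lemma as stated only concerns $B(0,R)$, your proof fully suffices here.
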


Unfortunately, the product of two slice regular functions is usually no longer regular. A surrogate multiplication called the \emph{$\ast$-product}, which preserves slice regularity, is defined as below. 
\begin{defn}
    Let $f=\sum_{n\in\N}q^{n}a_n$ and $g=\sum_{n\in\N} q^{n}b_n$ be two slice regular functions on $B(0,R)$.
    The $\ast$-product of $f$ and $g$ is defined as the slice regular function $f\ast g$ on $B(0,R)$ with the power series
    \[(f\ast g)(q)=\sum_{n\in\N}q^{n}\sum_{l=0}^{n}a_{l}b_{n-l}.\]
\end{defn}
Note that the $\ast$-products are not commutative in general. Also, the $\ast$-product $(f\ast g)(q)$ is not equal to the product $f(q)g(q)$. However, we can still connect $\ast$-product and pointwise product by
\begin{lem}\label{lem:pointwiseprod}
    Let $f$ and $g$ be two slice regular functions on $B(0,R)$. Then we have
    \[(f\ast g)(p)=\begin{cases*}
        f(p)g(f(p)^{-1}pf(p))&\text{if }$f(p)\neq 0$,\\
        0&\text{if }$f(p)=0$.
    \end{cases*}\]
\end{lem}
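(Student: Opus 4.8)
The plan is to reduce the statement to a direct manipulation of the defining power series, exploiting only two elementary facts: that conjugation by a fixed non-zero quaternion is an $\R$-algebra automorphism of $\mbh$, and that all the series in sight converge absolutely on $B(0,R)$, so that Cauchy products and reindexings are legitimate. I would write $f(q)=\sum_{m\in\N}q^{m}a_{m}$ and $g(q)=\sum_{n\in\N}q^{n}b_{n}$, so that by definition $(f\ast g)(q)=\sum_{N\in\N}q^{N}\sum_{l=0}^{N}a_{l}b_{N-l}$.

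The first step, valid for every $p\in B(0,R)$ with no invertibility assumption, is to establish the intermediate identity
$$(f\ast g)(p)=\sum_{n\in\N}p^{n}f(p)b_{n}.$$
This follows by substituting $f(p)=\sum_{m}p^{m}a_{m}$ into the right-hand side, using that distinct powers of the single quaternion $p$ commute so that $p^{n}p^{m}=p^{n+m}$, and then reindexing the resulting double sum by $N=m+n$ to recover the Cauchy product $\sum_{N}p^{N}\sum_{l=0}^{N}a_{l}b_{N-l}$. Absolute convergence of both series at $p$ (since $|p|<R$, so $\sum_{m}|p|^{m}|a_{m}|$ and $\sum_{n}|p|^{n}|b_{n}|$ converge) justifies the interchange of the order of summation.

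From this identity the case $f(p)=0$ is immediate, since every term $p^{n}f(p)b_{n}$ vanishes, whence $(f\ast g)(p)=0$, which is the second branch. For the case $f(p)\neq 0$, I would first note that $|f(p)^{-1}pf(p)|=|p|<R$ because conjugation preserves the quaternionic norm, so $g$ is indeed defined at the conjugated point. Letting $c\colon q\mapsto f(p)^{-1}qf(p)$ denote conjugation by $f(p)$, the multiplicativity of $c$ gives $c(p)^{n}=c(p^{n})=f(p)^{-1}p^{n}f(p)$ for every $n$. Substituting into the series for $g$ and pulling the constant $f(p)^{-1}$ through the left-distributing sum yields
$$g(f(p)^{-1}pf(p))=\sum_{n\in\N}f(p)^{-1}p^{n}f(p)b_{n}=f(p)^{-1}\sum_{n\in\N}p^{n}f(p)b_{n}.$$
Multiplying on the left by $f(p)$ and comparing with the intermediate identity gives $f(p)g(f(p)^{-1}pf(p))=(f\ast g)(p)$, the first branch.

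The only genuinely delicate point is bookkeeping the non-commutativity: one must keep the coefficients $a_{m},b_{n}$ on the right of the powers of $p$ throughout, never commute them past $p$, and verify that both the reindexing of the double series and the factoring-out of $f(p)^{-1}$ respect the left/right placement of every factor. Once the order of multiplications is tracked faithfully, no analytic difficulty remains beyond the absolute convergence already guaranteed on $B(0,R)$.
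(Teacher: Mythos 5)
Your proof is correct, and the intermediate identity $(f\ast g)(p)=\sum_{n}p^{n}f(p)b_{n}$ followed by conjugation-by-$f(p)$ is exactly the standard argument (this lemma is quoted by the paper without proof from the slice regular literature, e.g.\ Gentili--Stoppato, where it is proved by the same power-series manipulation). Your attention to the convergence of the rearranged double series and to the fact that $|f(p)^{-1}pf(p)|=|p|<R$ covers the only points that need care.
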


We can associate a slice regular function $f$ with a particular power series $f^{s}$ with real coefficients, which vanishes at all zeros of $f$. In fact, if $f$ has one zero $a\in\mbh$, then $f^{s}$ vanishes on the $2$-sphere $\S_{a}$ (see~\cite[Theorem 4.3]{GSto1}).
\begin{defn}
   Let  $f=\sum_{n\in\N}q^{n}a_n$ be a slice regular function on $B(0,R)$. Then we define \emph{regular conjugate} of $f$ as the slice regular function
   \[f^{c}(q)\coloneqq\sum_{n\in\N}q^{n}\overline{a}_n,\]
   and the \emph{symmetrization} of $f$ as
   \[f^{s}(q)\coloneqq(f\ast f^{c})(q)=(f^{c}\ast f)(q)=\sum_{n\in\N}q^{n}\sum_{l=0}^{n}a_{l}\overline{a}_{n-l}.\]
\end{defn}

With the notations of regular conjugate and symmetrization, we give the following definition.
\begin{defn}\label{def:regrec}
    Let $f$ be a slice regular function on $B=B(0,R)$. Let the zero set of its symmetrization $\mathcal{Z}_{f^s}$ be a proper subset of $B$. The \emph{regular reciprocal} of $f$ is defined as the function $f^{-\ast}(q):B\backslash\mathcal{Z}_{f^s}\to\mbh$ by
    \[f^{-\ast}(q)\coloneqq (f^{s}(q))^{-1}f^{c}(q).\]
\end{defn}
If a slice regular function $f=\sum_{\ell=0}^n q^{\ell}a_{\ell}$ is given by a finite series, i.e. $a_{\ell}=0$ for $\ell\gg 1$, then $f$ is called a \emph{slice regular polynomial}. The following well-known theorem,  also recognized as \emph{Eilenberg--Niven theorem}, gives precise factorizations of slice regular polynomials.
\begin{thm}[Fundamental Theorem of Algebra for Quaternions~\cite{EN,GSto1,GP1}]\label{thm:FTAQ}
    Let $f(q)$ be a non-constant slice regular polynomial on $\mbh$. Then its zero set $\mathcal{Z}_{f}$ is non-empty. More precisely, the zero set $\mathcal{Z}_{f}$ is a finite union of spherical roots (of the form $\S_{\alpha}$ for some $\alpha\in\mbh$) and isolated roots. Also, the polynomial $f(q)$ can be factorized completely as
    \[f(q)=(q^2-2\RE \alpha_{1}\cdot q+\abs{\alpha_{1}}^2)^{m_1}\ast\dots(q^2-2\RE \alpha_{\ell}\cdot q+\abs{\alpha_{\ell}}^2)^{m_\ell}\ast\sideset{}{^*}\prod (q-\beta_h)c,\]
    for some $\alpha_{\ell}, \beta_{h},c\in\mbh$ and $\prod^*$ is the $*$-product.
\end{thm}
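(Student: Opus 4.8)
The plan is to mimic the classical proof of the Fundamental Theorem of Algebra, with ordinary multiplication replaced by the $\ast$-product and with the real-coefficient symmetrization $f^s$ serving as the bridge to commutative algebra. The whole argument then runs by induction on $\deg f$, peeling off one linear or one quadratic factor at a time, with the base case a nonzero constant.

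The first and most delicate step is to show $\mathcal{Z}_f\neq\emptyset$. Since $f^s=f\ast f^c$ has real coefficients, its restriction $f^s|_{\C_{I}}$ to any slice is a complex polynomial with real coefficients, so by the classical Fundamental Theorem of Algebra it has a root $z_0\in\C_{I}$. The task is to transfer this zero of the commutative object $f^s$ into an honest zero of $f$: if $f(z_0)\neq 0$, then Lemma~\ref{lem:pointwiseprod} rewrites $0=f^s(z_0)=f(z_0)\,f^c\big(f(z_0)^{-1}z_0 f(z_0)\big)$, which forces $f^c$ to vanish at a point similar to $z_0$; since $(f^c)^c=f$ and $(f^c)^s=f^s$, running the identical argument with $f^c$ in place of $f$ yields a genuine zero of $f$ on the sphere $\S_{z_0}$. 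I expect this transfer to be the main obstacle, because $f(z_0)$ can be nonzero even when $f^s(z_0)=0$, so one must carefully track the conjugation $z_0\mapsto f(z_0)^{-1}z_0 f(z_0)$ and exploit the $f\leftrightarrow f^c$ symmetry to land a root on the correct sphere.

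Next I would set up the two factoring tools. For the dichotomy between spherical and isolated zeros I would use the Representation Formula (Lemma~\ref{lem:repformula}): if $f$ vanishes at two distinct points of a sphere $\S_\alpha$, the formula forces $f\equiv 0$ on all of $\S_\alpha$, so on each sphere $f$ vanishes nowhere, at exactly one point (an isolated zero), or identically (a spherical zero). For a spherical zero at $\S_\alpha$, the Splitting Lemma (Lemma~\ref{lem:splitlemma}) writes $f_I=F+GJ$; vanishing of $f$ at the conjugate pair $\alpha,\overline\alpha\in\C_{I}$ makes both $F$ and $G$ divisible by the real quadratic $\Delta_\alpha(x)=x^2-2\RE\alpha\cdot x+|\alpha|^2$, whence $f=\Delta_\alpha\cdot g$ with $g$ slice regular of degree $\deg f-2$ (here $\ast$ and ordinary product agree because $\Delta_\alpha$ is real). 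For an isolated zero $\beta$, I would perform $\ast$-division (the Euclidean algorithm for the $\ast$-product) to write $f=(x-\beta)\ast g+r$ with $r\in\mbh$ constant; evaluating at $\beta$ and using Lemma~\ref{lem:pointwiseprod} (which gives $\big((x-\beta)\ast g\big)(\beta)=0$ since the left factor vanishes at $\beta$) shows $r=f(\beta)=0$, so $f=(x-\beta)\ast g$ with $\deg g=\deg f-1$.

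Finally, I would run the induction: for $\deg f\geq 1$, Step~1 provides a zero, Step~2 peels off either a real quadratic $\Delta_\alpha$ or a linear $\ast$-factor $(x-\beta)$, and the inductive hypothesis factors the lower-degree quotient. This yields a complete $\ast$-factorization of $f$ into linear and quadratic pieces with a constant leading term $c$. Since real polynomials are central for the $\ast$-product, the quadratic factors $\Delta_{\alpha_\ell}$ commute past every linear factor and can be gathered at the front, collecting repeated spheres into powers $(\,\cdot\,)^{m_\ell}$ and leaving the isolated factors $\prod^{\ast}(x-\beta_h)$, which is exactly the asserted normal form.
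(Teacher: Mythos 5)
First, a structural point: the paper never proves Theorem~\ref{thm:FTAQ} at all --- it is imported as a known result with citations to Eilenberg--Niven, Gentili--Stoppato, Gentili--Struppa and Ghiloni--Perotti, and is only used downstream. So your proposal can only be measured against the standard literature proofs, whose architecture you have in fact reproduced: your two factoring tools and the final induction are correct as stated (the dichotomy between isolated and spherical zeros via the Representation Formula~\ref{lem:repformula}; extraction of a real quadratic factor via the Splitting Lemma~\ref{lem:splitlemma}; extraction of a linear $\ast$-factor by Euclidean $\ast$-division plus Lemma~\ref{lem:pointwiseprod}; gathering the real quadratic factors by centrality).

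The genuine gap is in the existence step, exactly where you flagged the main obstacle. From $f^{s}(z_0)=0$ and $f(z_0)\neq 0$ you correctly deduce $f^{c}(\tilde z_0)=0$ with $\tilde z_0=f(z_0)^{-1}z_0f(z_0)\in\S_{z_0}$. But ``running the identical argument with $f^{c}$ in place of $f$'' requires a point $p\in\S_{z_0}$ at which $f^{s}(p)=0$ \emph{and} $f^{c}(p)\neq 0$: only then does Lemma~\ref{lem:pointwiseprod}, applied to $f^{s}=f^{c}\ast f$, give $0=f^{s}(p)=f^{c}(p)\,f\bigl(f^{c}(p)^{-1}p\,f^{c}(p)\bigr)$ and hence a zero of $f$ on $\S_{z_0}$. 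At the only point you control, $\tilde z_0$, you know precisely the opposite, $f^{c}(\tilde z_0)=0$, so the implication is vacuous there; worse, such a point $p$ may fail to exist altogether, namely when $f^{c}\equiv 0$ on $\S_{z_0}$. The stuck configuration really occurs: for $f(q)=q+i$ and $z_0=i$ one has $f^{s}(i)=0$, $f(i)=2i\neq 0$, $\tilde z_0=i$ and $f^{c}(i)=0$, so your ping-pong terminates with a zero of $f^{c}$ and never finds the actual zero $-i$ of $f$. In short, ``$f^{c}$ vanishes somewhere on $\S_{z_0}$ implies $f$ vanishes somewhere on $\S_{z_0}$'' is not obtainable from the pointwise-product trick alone --- it is the very statement being proved, applied to $f^{c}$.

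The fix uses tools you already listed, and is how the cited proofs close the loop. Since $f^{s}$ has real coefficients, $f^{s}(r^{-1}qr)=r^{-1}f^{s}(q)r$ for all $r\neq 0$, so $f^{s}$ vanishes on the whole sphere $\S_{z_0}$. Suppose $f$ had no zero on $\S_{z_0}$. Then by the displayed identity above, $f^{c}(p)=0$ for \emph{every} $p\in\S_{z_0}$, i.e.\ $f^{c}\equiv 0$ on $\S_{z_0}$. Now apply your own spherical-factor argument to $f^{c}$ rather than to $f$: the Splitting Lemma gives $f^{c}=\Delta_{z_0}\,h$ with $\Delta_{z_0}(q)=q^2-2\RE z_0\cdot q+\abs{z_0}^2$ real, whence $f=(f^{c})^{c}=\Delta_{z_0}\,h^{c}$ vanishes identically on $\S_{z_0}$, a contradiction. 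With this case analysis inserted, your existence step, and with it the whole induction, is complete and agrees with the arguments in the references the paper cites.
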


\begin{proof}
    See also \cite[Theorem 3.23]{GSS}.
\end{proof}

\begin{rmk}
 Note that the factorization of the slice regular polynomial $f(q)$ is usually not unique. Also, the quaternions $\beta_h\,(h>1)$ are in general not roots of $f(q)$. For example, the quadratic polynomial $f(q)=(q-i)\ast(q-j)$ has only one root $q=i$.
\end{rmk}

The following two kinds of functions are of particular importance in our computations.
\begin{defn}
  Let $f$ be a slice regular function on $B=B(0,R)$. Then $f$ is called \emph{slice preserving} if $f(B_I)\subset \mbc_I$ for any $I\in\mbs$. It is called \emph{one-slice preserving} on $\mbc_I$ if $f(B_I)\subset \mbc_I$ for one  certain $I\in\mbs$.
\end{defn}
It is possible to characterize these two kinds of functions by their power series.
\begin{thm}
    Let $f(q)=\sum_{n\in\mbn}q^na_n$ be a slice regular function on $B(0,R)$. Then we have 
    \begin{enumerate}
        \item The function $f$ is slice preserving precisely if $a_n\in \mbr$ for all $n$.
        \item The function $f$ is one-slice preserving on $\mbc_I$ precisely if $a_n\in \mbc_I$ for all $n$.
    \end{enumerate}
\end{thm}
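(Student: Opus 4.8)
The plan is to prove statement (2) first and then deduce (1) as an immediate corollary, since slice preservation is precisely one-slice preservation on every slice simultaneously. For (2), the forward implication is essentially formal: if $a_n\in\mbc_I$ for all $n$, then for any $q\in B_I$ we have $q\in\mbc_I$, whence $q^n\in\mbc_I$; since $\mbc_I=\R+\R I$ is a commutative subfield of $\mbh$ closed under addition and multiplication, each term $q^na_n$ lies in $\mbc_I$, and so does the convergent sum $f(q)=\sum_n q^na_n$. Thus $f(B_I)\subset\mbc_I$.

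For the converse of (2) I would invoke the Splitting Lemma. Fix $J\in\mbs$ with $I\perp J$ and decompose each coefficient as $a_n=b_n+c_nJ$ with $b_n,c_n\in\mbc_I$. Rearranging the convergent series gives, for $z\in B_I$,
\[f_I(z)=\sum_n z^na_n=\Big(\sum_n z^nb_n\Big)+\Big(\sum_n z^nc_n\Big)J=F(z)+G(z)J,\]
where $F,G\colon B_I\to\mbc_I$ are exactly the holomorphic functions furnished by Lemma~\ref{lem:splitlemma}. The hypothesis $f(B_I)\subset\mbc_I$ means precisely that the $J$-component of $f_I(z)$, namely $G(z)$, vanishes for every $z\in B_I$, because the decomposition $q=z+wJ$ with $z,w\in\mbc_I$ is unique. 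Since $G$ is holomorphic and identically zero on $B_I$, all of its Taylor coefficients vanish, so $c_n=0$ and hence $a_n=b_n\in\mbc_I$ for every $n$.

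Part (1) then follows directly. If $a_n\in\R$ for all $n$, then $a_n\in\mbc_I$ for every $I\in\mbs$, so the forward direction of (2) gives $f(B_I)\subset\mbc_I$ for all $I$, i.e. $f$ is slice preserving. Conversely, if $f$ is slice preserving, then $f$ is one-slice preserving on $\mbc_I$ for each $I\in\mbs$, so the converse of (2) yields $a_n\in\mbc_I$ for all $I$; using $\R=\bigcap_{I\in\mbs}\mbc_I$ from Section~\ref{sec2} we conclude $a_n\in\R$.

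The only genuine subtlety, and the step I would treat most carefully, is the justification that the coefficient split $a_n=b_n+c_nJ$ produces exactly the Splitting-Lemma pair $(F,G)$ and that $G\equiv0$ forces $c_n=0$ for all $n$. This rests on two facts used implicitly: the uniqueness of the power-series representation of a slice regular function, so that the $a_n$ are well defined, and the standard identity theorem for the complex-holomorphic $G$, which guarantees that vanishing on $B_I$ kills every coefficient. Once these are in place the non-commutativity causes no difficulty, since every product $z^nb_n$ and $z^nc_n$ takes place inside the commutative slice $\mbc_I$.
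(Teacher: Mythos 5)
Your proof is correct, but it takes a genuinely different route from the paper: the paper does not prove this theorem at all, instead citing the literature --- part (1) is attributed to~\cite[Section 1.4]{GSS} and part (2) is stated as a corollary of~\cite[Proposition 1.4]{AdF}. You give a self-contained argument: the forward directions follow from $\mbc_I$ being a closed commutative subalgebra, and the converse of (2) follows by splitting the coefficients $a_n=b_n+c_nJ$, identifying the resulting series with the pair $(F,G)$ from the Splitting Lemma~\ref{lem:splitlemma}, and using the direct-sum decomposition $\mbh=\mbc_I\oplus\mbc_I J$ together with the identity theorem to force $G\equiv 0$ and hence $c_n=0$. Your deduction of (1) from (2) via $\R=\bigcap_{I\in\S}\mbc_I$ is clean and matches the structure set up in \cref{sec2}. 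The subtleties you flag --- uniqueness of the power-series coefficients and the vanishing of all Taylor coefficients of the holomorphic function $G$ --- are exactly the right ones, and both are standard. What your approach buys is transparency and independence from external references; what the paper's citation buys is brevity and deference to the general theory (the cited result of Altavilla--de Fabritiis covers a broader class of functions than polynomials or power series on balls centered at the origin). As a minor point of rigor, your rearrangement $\sum_n z^n(b_n+c_nJ)=F(z)+G(z)J$ deserves the one-line justification that $|b_n|,|c_n|\leq|a_n|$, so both sub-series converge absolutely on $B_I$; with that noted, the argument is complete.
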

\begin{proof}
See~\cite[Remark 1.42]{GSS}.
\end{proof}
Here we give an example of a slice regular function which is slice preserving.
\begin{exa}
 The quaternionic exponential function $\exp(q)$ (see for example~\cite[Example 1.7]{GSS}) is defined by the slice regular power series
\[e^{q}\coloneqq\sum_{n=0}^{\infty}\frac{q^n}{n!},\quad\text{for }q\in\mbh.\]
It has all real coefficients, and is thus slice preserving. In fact, when $I^2=-1$ is an imaginary unit, we have Euler's formula $e^{I\theta}=\cos\theta+I\sin\theta$ for all $\theta\in\R$. So it holds that
\[e^q=e^{\RE q}\Big(\!\cos |\!\IM q|+\frac{\IM q}{|\!\IM q|}\sin |\!\IM q|\Big).\]
The logarithm (principal branch) is its inverse
\[\log q=\log\abs{q}+\frac{\IM q}{\abs{\!\IM q}}\arccos{\frac{\RE q}{|q|}},\]
where $q\in \mbh\setminus \mbr_{\leq 0}$. One can find more detail in \cite[Section 4.2]{GSS}. In the $\SU(2)$-model of the unit quaternions, the quaternion exponential function is given by
\begin{align*}
  \exp:\mathfrak{su}(2)&\to \SU(2)\\
  \begin{pmatrix}
    i \theta_1&\theta_2+i \theta_3\\
    -\theta_2+i\theta_3&-i\theta_1
  \end{pmatrix}
  &\mapsto \begin{pmatrix}
    \cos \theta+i\frac{\sin \theta}{\theta}\theta_1&\frac{\sin\theta}{\theta}(\theta_2+i\theta_3)\\
    \frac{\sin\theta}{\theta}(-\theta_2+i\theta_3)&\cos \theta-i\frac{\sin \theta}{\theta}\theta_1
  \end{pmatrix},
\end{align*}
where $\theta=\sqrt{\theta_1^2+\theta_2^2+\theta_3^2}$. 
\end{exa}

\begin{lem}\label{lem:theta}
 Let $\mathbb{D}=\{i\theta_1+j\theta_2+k\theta_3\in\mbh\,|\,\theta=\sqrt{\theta_1^2+\theta_2^2+\theta_3^2}\in [0,\pi)\}$. Then the quaternion functions $\exp$ and $\log$ define a bijection between
\[
 \begin{tikzcd}[every arrow/.append style={shift left}]
\mathbb{D}\arrow{r}{\exp}&
\mbt^{1}(\mbh)\backslash\{-1\}\arrow{l}{\log}
 \end{tikzcd}
\]
\end{lem}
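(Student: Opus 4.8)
The plan is to reduce everything to the quaternionic Euler formula $e^{I\theta}=\cos\theta+I\sin\theta$ recorded just above the statement, combined with the polar decomposition of purely imaginary quaternions. First I would observe that any non-zero $v=i\theta_1+j\theta_2+k\theta_3\in\mathbb{D}$ can be written uniquely as $v=I\theta$ with $\theta=|v|\in(0,\pi)$ and $I=v/|v|\in\S$, while $v=0$ corresponds to $\theta=0$; this is simply reading off the norm and the direction. Applying Euler's formula gives $\RE(\exp v)=\cos\theta$ and $\IM(\exp v)=I\sin\theta$, so $|\exp v|^2=\cos^2\theta+\sin^2\theta=1$ and $\exp$ lands in $\mbt^1(\mbh)$.

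Next I would pin down the image precisely. A unit quaternion equals $-1$ if and only if its real part is $-1$ and its imaginary part vanishes, that is, if and only if $\cos\theta=-1$, i.e.\ $\theta=\pi$; since $\theta\in[0,\pi)$ this is excluded, so $\exp(\mathbb{D})\subseteq\mbt^1(\mbh)\setminus\{-1\}$. This is the step where the half-open interval $[0,\pi)$ is essential. For surjectivity I would take an arbitrary unit $q\neq-1$ and build its preimage: if $q=1$ take $v=0$; otherwise $q$ is not real (a real unit quaternion is $\pm1$), so $\IM q\neq0$ and $I:=\IM q/|\!\IM q|\in\S$ is well defined. Writing $q=\RE q+I|\!\IM q|$ and using $(\RE q)^2+|\!\IM q|^2=1$ together with $\RE q>-1$, there is a unique $\theta\in(0,\pi)$ with $\cos\theta=\RE q$ and $\sin\theta=|\!\IM q|$, and then $v:=I\theta\in\mathbb{D}$ satisfies $\exp v=q$.

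Finally I would verify that this $v$ is exactly the value returned by the stated logarithm: on the unit sphere $\log|q|=0$ and $\arccos(\RE q)=\theta\in[0,\pi)$ lies in the principal range of $\arccos$, so $\log(\exp v)=v$ and $\exp(\log q)=q$. This exhibits $\log$ as a genuine two-sided inverse and hence the map as a bijection.

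The computations are elementary, so the only real care is at the boundary. The genuine subtlety is matching the range $[0,\pi]$ of $\arccos$ against the domain $[0,\pi)$: the missing endpoint $\theta=\pi$ is precisely the fibre over $-1$, which is why that single point is deleted from the codomain, and it is also what forces injectivity (two representations $I\theta$ and $I'\theta'$ with $\theta,\theta'\in[0,\pi)$ and equal exponentials must coincide, since the imaginary part fixes $I$ and $\sin\theta$ while the real part fixes $\cos\theta$). The other delicate point is the indeterminate direction $\IM q/|\!\IM q|$ at $q=1$ (equivalently $v=0$), which must be treated separately or absorbed by the convention $\log 1=0$; once that case is dispatched, injectivity, surjectivity, and the inverse relations all follow.
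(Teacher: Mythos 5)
Your proof is correct and follows essentially the same route as the paper's: both use Euler's formula (equivalently $\lvert e^{q}\rvert=e^{\RE q}$) to see that $\exp$ lands in $\mbt^{1}(\mbh)$ with $-1$ excluded because $\theta\neq\pi$, use the range $[0,\pi)$ of $\arccos$ on $\mbt^{1}(\mbh)\setminus\{-1\}$ to see that $\log$ lands in $\mathbb{D}$, and then check the two compositions are the identity. The only difference is that you unpack the paper's ``direct calculation'' explicitly and flag the $q=1$ (i.e. $v=0$) edge case, which the paper's proof passes over silently.
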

\begin{proof}
   Let $q\in\mathbb{D}$. Notice first that $\abs{e^{q}}=e^{\RE q}$.  Since we have $\theta\neq\pi$, it follows that $e^{q}\neq -1$ is in $\mbt^{1}(\mbh)$. For the other side, let $q\in\mbt^{1}(\mbh)$ and $q\neq -1$. We have $\RE \log q=0$ and $\arccos\frac{\RE q}{\abs{q}}\in [0,\pi)$, which indicates that $\log q\in\mathbb{D}$. This means that both maps are well-defined. A direct calculation shows that $\exp\circ\log$ and $\log\circ\exp$ are identity maps.
\end{proof}
We end this section by giving the formulas of Mahler measure in terms of quaternionic exponential.
\begin{thm}\label{thm:mahlerexp}
    Let $P$ be any quaternionic polynomial and let $\mathbb{D}$ be defined as before, then we have
    \begin{align*}
         &\mH\left(P(x_1,\dots,x_n)\right)\\
         &=\frac{1}{{(2\pi^2)}^n}\int_{{\mathbb{D}}^n}\log\Abs{P(e^{i\theta_{11}+j\theta_{12}+k\theta_{13}},\dots,e^{i\theta_{n1}+j\theta_{n2}+k\theta_{n3}})}\prod_{\ell=1}^{n}\frac{\sin^2 \theta_{\ell}}{\theta_\ell^2}\dif\theta_{\ell1}\dif\theta_{\ell2}\dif\theta_{\ell3},
    \end{align*}
and
    \begin{align*}
         &Z_{\mbh}\left(P(x_1,\dots,x_n),s\right)\\
         &=\frac{1}{{(2\pi^2)}^n}\int_{{\mathbb{D}}^n}\Abs{P(e^{i\theta_{11}+j\theta_{12}+k\theta_{13}},\dots,e^{i\theta_{n1}+j\theta_{n2}+k\theta_{n3}})}^{s}\prod_{\ell=1}^{n}\frac{\sin^2 \theta_{\ell}}{\theta_\ell^2}\dif\theta_{\ell1}\dif\theta_{\ell2}\dif\theta_{\ell3},
    \end{align*}
where we denote $\theta_\ell=\sqrt{\theta_{\ell1}^2+\theta_{\ell2}^2+\theta_{\ell3}^2}$.
\end{thm}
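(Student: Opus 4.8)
The plan is to derive both identities from the hyperspherical expression~\eqref{eq:hyperspherical} by changing variables through the exponential chart supplied by Lemma~\ref{lem:theta}. I will carry out the computation for $n=1$ and for the logarithmic Mahler measure; the zeta case is word-for-word identical (replace $\log\abs{P}$ by $\abs{P}^s$), and the general $n$ then follows because the Haar measure on $(\S^{3})^n$ is the product measure, so the integrand and the density factor over the $n$ independent copies.

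First I would invoke Lemma~\ref{lem:theta}: the map $\exp$ is a bijection from $\mathbb{D}$ onto $\mbt^1(\mbh)\setminus\{-1\}$ with smooth inverse $\log$. Since $\{-1\}$ is a single point it carries zero Haar measure, so~\eqref{eq:hyperspherical} is unaffected by integrating over $\mbt^1(\mbh)\setminus\{-1\}$ and pulling back along $\exp$. For $v=i\theta_1+j\theta_2+k\theta_3\in\mathbb{D}$ with $\theta=\abs{v}=\sqrt{\theta_1^2+\theta_2^2+\theta_3^2}$, Euler's formula gives $e^{v}=\cos\theta+\sin\theta\,(v/\theta)$, a unit quaternion whose real part is $\cos\theta$ and whose imaginary part has modulus $\sin\theta$ and direction $v/\theta$.

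The heart of the argument is the transformation of the measure. I would introduce spherical coordinates $(\theta,\phi_1,\phi_2)$ on the ball $\mathbb{D}\subset\R^3\cong\IM\mbh$, so that $v/\theta$ traces the unit sphere $\mbs$ as $(\phi_1,\phi_2)$ vary and the Euclidean volume element is $\dif\theta_1\,\dif\theta_2\,\dif\theta_3=\theta^2\sin\phi_1\,\dif\theta\,\dif\phi_1\,\dif\phi_2$. Comparing $e^{v}=\cos\theta+\sin\theta\,(v/\theta)$ with the hyperspherical parametrization of $\S^{3}$ recorded in Section~\ref{sec2}, one sees that $\exp$ carries these spherical coordinates to the hyperspherical coordinates, with the radial variable $\theta$ playing the role of the polar angle and $(\phi_1,\phi_2)$ the two remaining angles. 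Hence the normalized Haar measure pulls back to $\frac{1}{2\pi^2}\sin^2\theta\,\sin\phi_1\,\dif\theta\,\dif\phi_1\,\dif\phi_2$, and dividing by the Euclidean element yields
\[
\dif\mu(\S^{3})=\frac{1}{2\pi^2}\,\frac{\sin^2\theta}{\theta^2}\,\dif\theta_1\,\dif\theta_2\,\dif\theta_3
\]
on $\mathbb{D}$. Substituting this into~\eqref{eq:hyperspherical} gives the one-variable formula, and taking the $n$-fold product of this identity produces the stated expressions for both $\mH(P)$ and $Z_{\mbh}(P;s)$.

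The only delicate point is this measure computation, namely checking that the spherical chart on $\mathbb{D}$ aligns with the hyperspherical chart on $\S^{3}$ under $\exp$ and extracting the Jacobian factor $\sin^2\theta/\theta^2$. This is essentially bookkeeping once the coordinates are matched, because the exponential map is radial: it fixes the direction $v/\theta$ of the imaginary part and merely reparametrizes the radius $\theta$ as the polar angle. I would be careful to track the normalizing factors of $2\pi^2$ coming from the Haar measure on $\S^{3}$, and to confirm that the integrand $\log\abs{P(e^{v})}$ (respectively $\abs{P(e^{v})}^s$) is expressed in the Cartesian variables $\theta_1,\theta_2,\theta_3$ precisely as written in the statement.
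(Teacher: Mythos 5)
Your proof is correct, and it reaches the formula by a somewhat different route than the paper. The paper starts from the $\SU(2)$-model \eqref{eq:SU(2)}: it substitutes $z=\cos\theta+i\frac{\sin\theta}{\theta}\theta_1$, $w=\frac{\sin\theta}{\theta}(\theta_2+i\theta_3)$ directly into the invariant $3$-form and computes the wedge-product identity
\[
\frac{\dif z}{z}\wedge\dif w\wedge\dif\overline{w}=2\,\frac{\sin^2\theta}{\theta^2}\,\dif\theta_1\wedge\dif\theta_2\wedge\dif\theta_3,
\]
then invokes Lemma~\ref{lem:theta}. You instead start from the hyperspherical expression \eqref{eq:hyperspherical} and exploit the radiality of the exponential: since $e^{\theta u}=\cos\theta+\sin\theta\,u$ for $u\in\S$, the map $\exp$ is literally the identity when read in your spherical chart $(\theta,\phi_1,\phi_2)$ on $\mathbb{D}$ and the paper's hyperspherical chart on $\S^{3}$, so the Haar density $\frac{1}{2\pi^2}\sin^2\theta\sin\theta_1$ pulls back unchanged and, divided by the Euclidean element $\theta^2\sin\phi_1\,\dif\theta\,\dif\phi_1\,\dif\phi_2$, yields the factor $\frac{1}{2\pi^2}\sin^2\theta/\theta^2$ with no Jacobian computation at all. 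Both arguments rest on Lemma~\ref{lem:theta} (plus the fact that $\{-1\}$ is Haar-null, which you make explicit and the paper leaves implicit), and both reduce general $n$ to $n=1$ through the product structure of the Haar measure. What the paper's route buys is independence from the hyperspherical density formula, working directly with the group-theoretic $3$-form; what yours buys is transparency, since the factor $\sin^2\theta/\theta^2$ appears conceptually as the ratio of the $\S^{3}$-density to the Euclidean density, and the chart-matching replaces the wedge calculation. The two computations encode the same Jacobian, so the proofs are of equal strength.
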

\begin{proof}
    In~\eqref{eq:SU(2)}, putting $z=\cos \theta+i\frac{\sin \theta}{\theta}\theta_1$ and $w=\frac{\sin\theta}{\theta}(\theta_2+i\theta_3)$, one finds that
    \[\frac{\dif z}{z}\wedge \dif w\wedge \dif \overline{w}=2\,\frac{\sin^2\theta}{\theta^2}\dif\theta_1\wedge\dif\theta_2\wedge\dif\theta_3.\]
Then using Lemma~\ref{lem:theta} we derive the desired formulas.
\end{proof}

\section{Jensen formula and Mahler measures of slice-preserving  polynomials}\label{sec4}

As a preparatory step, we give some formulas that we will be using to compute the quaternionic Mahler measures.

\begin{lem}[Jensen's formula]\label{lem:jenformula}
    Let $I\in\S$ and let $\alpha=a e^{I\phi}$ with $a\geq0$ and $\phi\in\R$. Define the function
    \[\mathcal{J}(\alpha)\coloneqq \mathcal{J}(a,\phi)=\frac{1}{\pi}\int_{0}^{\pi}\log\left(1-2a\cos(\theta-\phi)+a^2\right)\sin^2{\theta}\dif\theta.\]
    Then $\mathcal{J}(0)=0$, and for $0<\abs{\alpha}\leq 1$ we have
\begin{align*}
\mathcal{J}(\alpha)=\frac{\abs{\alpha}^2}{4}\cos2\phi-\frac{1}{2\pi I}\IM\biggl[&\Big(\alpha-\frac{1}{\alpha}\Big)-\Big(\alpha^2-\frac{1}{\alpha^2}\Big)\arctanh\alpha\\
&+4\Li_2(\alpha)-\Li_2(\alpha^2)\biggr].
\end{align*}
For $\abs{\alpha}>1$ we have
\begin{align*}
\mathcal{J}(\alpha)=\mathcal{J}(\alpha/\abs{\alpha}^2)+\log \abs{\alpha},
\end{align*}
where the functions $\Li_2$ and $\arctanh$ are regarded as the regular extensions of their complex counterparts.
\end{lem}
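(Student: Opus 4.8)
The plan is to reduce the whole computation to the slice $\C_I \cong \C$ (with $I$ in the role of $i$) and then to a single trigonometric Fourier expansion. First I would record that, working inside $\C_I$, one has $1 - 2a\cos(\theta - \phi) + a^2 = \abs{e^{I\theta} - \alpha}^2$, so that $J(\alpha) = \frac{1}{\pi}\int_0^\pi \log\abs{e^{I\theta} - \alpha}^2 \sin^2\theta\,d\theta$ is genuinely a slice integral. I would treat $\abs{\alpha} < 1$ first and recover $\abs{\alpha} = 1$ at the end by continuity.

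For $\abs{\alpha} < 1$ I would invoke the classical expansion $\log(1 - 2a\cos\psi + a^2) = -2\sum_{n\geq 1}\frac{a^n}{n}\cos(n\psi)$ with $\psi = \theta - \phi$, which converges uniformly in $\theta$, so I may integrate term by term. The only inputs then are the elementary integrals $\int_0^\pi \cos(n\theta)\sin^2\theta\,d\theta$ and $\int_0^\pi \sin(n\theta)\sin^2\theta\,d\theta$; a direct evaluation shows the cosine integral vanishes except for $n = 2$ (where it equals $-\pi/4$) and the sine integral vanishes for even $n$ and equals $-4/(n(n^2-4))$ for odd $n$. Expanding $\cos(n(\theta-\phi))$ and collecting terms leaves only the resonant term $n=2$, which produces $\frac{1}{4}\abs{\alpha}^2\cos 2\phi$, together with the odd-$n$ contributions, giving
$$J(\alpha) = \frac{\abs{\alpha}^2}{4}\cos 2\phi + \frac{8}{\pi}\sum_{n\ \mathrm{odd}} \frac{a^n \sin(n\phi)}{n^2(n^2-4)}.$$

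The heart of the argument — and the step I expect to be most delicate — is to recognize the odd series as the imaginary part of the stated combination of $\arctanh$ and dilogarithms. Since $\operatorname{Im}(\alpha^n) = a^n\sin(n\phi)$ under the identification $\C_I\cong\C$, and $\frac{1}{I}\IM$ is exactly this imaginary part, it suffices to prove the power-series identity $F(\alpha) = -16\sum_{n\ \mathrm{odd}}\alpha^n/(n^2(n^2-4))$, where $F(\alpha)$ denotes the bracketed expression. I would obtain this from the partial fraction $\frac{1}{n^2(n^2-4)} = \frac{1}{16}\bigl(\frac{1}{n-2} - \frac{1}{n+2}\bigr) - \frac{1}{4n^2}$: the $1/n^2$ piece sums to $\Li_2(\alpha) - \frac14\Li_2(\alpha^2)$ via $\sum_{n\ \mathrm{odd}}\alpha^n/n^2 = \frac12(\Li_2(\alpha) - \Li_2(-\alpha))$ together with $\Li_2(\alpha) + \Li_2(-\alpha) = \frac12\Li_2(\alpha^2)$, yielding the $4\Li_2(\alpha) - \Li_2(\alpha^2)$ term; while reindexing the $1/(n\pm 2)$ pieces against $\arctanh\alpha = \sum_{k\ \mathrm{odd}}\alpha^k/k$ produces exactly $(\alpha - \alpha^{-1}) - (\alpha^2 - \alpha^{-2})\arctanh\alpha$. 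Matching imaginary parts then gives the closed form for $\abs{\alpha} \le 1$, the boundary case following from absolute convergence of the series and integrability of the logarithmic singularity (Abel's theorem).

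Finally, for $\abs{\alpha} > 1$ I would use the reflection $1 - 2a\cos(\theta-\phi) + a^2 = a^2\bigl(1 - 2a^{-1}\cos(\theta-\phi) + a^{-2}\bigr)$, whose second factor is the integrand for the point $\alpha/\abs{\alpha}^2$ of modulus $a^{-1} < 1$. Splitting the logarithm and using $\frac{1}{\pi}\int_0^\pi \sin^2\theta\,d\theta = \frac12$ turns the $\log a^2$ term into $\log\abs{\alpha}$, giving $J(\alpha) = J(\alpha/\abs{\alpha}^2) + \log\abs{\alpha}$.
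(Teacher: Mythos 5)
Your proposal is correct and follows essentially the same route as the paper: reduce $\abs{\alpha}>1$ to $\abs{\alpha}\leq 1$ via the reflection $a\mapsto 1/a$, expand $\log(1-2a\cos(\theta-\phi)+a^2)$ in the standard cosine series, evaluate $\int_0^{\pi}\cos n(\theta-\phi)\sin^2\theta\,\dif\theta$ (which you merely split into its $\cos n\theta$ and $\sin n\theta$ pieces), and then resum the odd-$n$ series via the same partial fraction $\tfrac{1}{n^2(n^2-4)}=\tfrac{1}{16}\bigl(\tfrac{1}{n-2}-\tfrac{1}{n+2}-\tfrac{4}{n^2}\bigr)$ and the $\arctanh$/$\Li_2$ series. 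The only (minor) additions beyond the paper's argument are your explicit dilogarithm duplication identity and the Abel-summation remark handling the boundary case $\abs{\alpha}=1$, which the paper passes over silently.
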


\begin{proof} 
The case $a=0$ is immediate. Hence assume $a>0$.
Note that we have
\[
    \mathcal{J}(a,\phi)-\log{\abs{a}}=\frac{1}{\pi}\int_0^{\pi}\log\Big(1-\frac{2}{a}\cos(\theta-\phi)+\frac{1}{a^{2}}\Big)\sin^2\theta\dif \theta.
\]
The latter is nothing but $\mathcal{J}(1/a,\phi)$, allowing us to assume that $\abs{a}\leq 1$.  Consider the following expansion (see \cite[Example 9.1]{WW})
\begin{equation}\label{eq:logalpha}
    \log(1-2a\cos(\theta-\phi)+a^2)=-2\sum_{n\geq 1}\frac{a^n}{n}\cos n(\theta-\phi).
\end{equation}
It remains to compute the integral of the form 
\begin{equation}\label{eq:cossin}
    A_n=\int_0^{\pi}\cos n(\theta-\phi)\sin^2\theta\dif\theta.
\end{equation}
Using
\[
\cos\bigl(n(\theta-\phi)\bigr)
=\RE\bigl(e^{-in\phi}e^{in\theta}\bigr),\quad
\sin^2\theta=\frac12-\frac14\bigl(e^{2i\theta}+e^{-2i\theta}\bigr),
\]
the integral~\eqref{eq:cossin} is the real part of 
\begin{align*}
e^{-in\phi}\left(\frac{1}{2}\int_0^\pi e^{in\theta}\dif \theta-\frac{1}{4}\int_0^{\pi}e^{i(n+2)\theta}\dif \theta-\frac{1}{4}\int_0^{\pi}e^{i(n-2)\theta}\dif\theta\right).
\end{align*}
Therefore,
\[
A_n=\begin{cases}
    -\frac{4}{(n^2-4)n}\sin n\phi&n \text{ odd},\\
    0&n \text{ even, } n\neq 2,\\
    -\frac{\pi}{4}\cos 2\phi&n=2.
\end{cases}\]
It then follows that
\begin{align*}
   &\mathcal{J}(a,\phi)=\frac{a^2}{4}\cos2\phi+\sum_{n\text{ odd},\,n\geq 1}\frac{8a^n}{n^2(n^2-4)\pi}\sin n\phi.
\end{align*}
Through partial fraction decomposition, we obtain
\[\frac{1}{n^2(n^2-4)}=\frac{1}{16}\left(\frac{1}{n-2}-\frac{1}{n+2}-\frac{4}{n^2}\right).\]
It is also worth noting that for $\lvert \alpha \rvert < 1$, one has
\begin{equation}\label{eq:arctanh}
    \arctanh \alpha=\sum_{n\text{ odd},\,n\geq 1}\frac{\alpha^n}{n}.
\end{equation}
So
\begin{align*}
\mathcal{J}(\alpha)=\frac{\abs{\alpha}^2}{4}\cos2\phi-\frac{1}{2\pi I}\IM\biggl[&\Big(\alpha-\frac{1}{\alpha}\Big)-\Big(\alpha^2-\frac{1}{\alpha^2}\Big)\arctanh\alpha\\
&+4\Li_2(\alpha)-\Li_2(\alpha^2)\biggr].\qedhere
\end{align*}
\end{proof}

In fact, the Mahler measures become much easier to compute if the polynomial is slice preserving or one-slice preserving.
\begin{prop}\label{prop:pxtheta}
  Let $P(x)=\sum_{\ell=0}^nx^{\ell}a_{\ell}$ be a slice regular polynomial. 
  \begin{enumerate}
      \item If $P$ is slice preserving, then we have
  \[m_{\mbh}(P)=\frac{2}{\pi}\int_0^{\pi}\log |P(e^{i \theta})|\sin^2\theta\dif \theta,\]
  \[Z_{\mbh}(P,s)=\frac{2}{\pi}\int_0^{\pi}|P(e^{i \theta})|^s\sin^2\theta\dif \theta.\]
  \item If $P$ is one-slice preserving on $\mbc_I$, and let $I\perp J$. Then 
  \[m_{\mbh}(P)=\frac{1}{\pi}\int_0^{\pi}\int_0^{\pi}\log |P(\cos \theta+Ie^{J\theta_1}\sin \theta)|\sin^2\theta\sin \theta_1\dif \theta\dif \theta_1,\]
  \[Z_{\mbh}(P,s)=\frac{1}{\pi}\int_0^{\pi}\int_0^{\pi}|P(\cos \theta+Ie^{J\theta_1}\sin \theta)|^s\sin^2\theta\sin \theta_1\dif \theta\dif \theta_1.\]
  \end{enumerate}
 \end{prop}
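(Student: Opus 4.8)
The plan is to reduce everything to the hyperspherical integral formula~\eqref{eq:hyperspherical}, in which a unit quaternion is written as $q=\cos\theta+\sin\theta\cos\theta_1\,I+\sin\theta\sin\theta_1\cos\theta_2\,J+\sin\theta\sin\theta_1\sin\theta_2\,K$ and the probability Haar measure is $\frac{1}{2\pi^2}\sin^2\theta\sin\theta_1\,\dif\theta\,\dif\theta_1\,\dif\theta_2$. The whole idea is that under each hypothesis the integrand $\log|P(q)|$ is \emph{pointwise invariant} under a one-parameter group of conjugations that acts only on the redundant angular variables; by Fubini one then integrates those variables out and collapses the triple integral to a single or a double integral. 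Since this reduction uses nothing about $P$ beyond the invariance of $|P(q)|$, the formulas for $m_{\mbh}$ and $Z_{\mbh}$ are proved by the identical argument, so I would carry out the computation for $\log|P|$ and remark that replacing it by $|P|^s$ changes nothing.

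For part (1), the key point is conjugation equivariance. Writing $P(x)=\sum_{\ell}x^{\ell}a_\ell$ with all $a_\ell\in\R$, the reality (hence centrality) of the coefficients gives, for any nonzero $r\in\mbh$, the identity $P(r^{-1}qr)=\sum_{\ell}r^{-1}q^{\ell}r\,a_\ell=r^{-1}\bigl(\sum_{\ell}q^{\ell}a_\ell\bigr)r=r^{-1}P(q)r$, whence $|P(r^{-1}qr)|=|P(q)|$. Thus $|P(q)|$ is constant on every similarity sphere $\mbs_q$, i.e.\ it depends only on $(\RE q,|\!\IM q|)=(\cos\theta,\sin\theta)$ and therefore only on $\theta$. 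The integrand being independent of $\theta_1$ and $\theta_2$, integrating these out yields the factor $\int_0^{\pi}\sin\theta_1\,\dif\theta_1\int_0^{2\pi}\dif\theta_2=4\pi$, which combined with $\frac{1}{2\pi^2}$ produces the constant $\frac{2}{\pi}$. Taking $e^{i\theta}=\cos\theta+i\sin\theta$ as the representative of the similarity class (legitimate, since all points of the class give the same value of $|P|$) gives the claimed one-dimensional formula.

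For part (2), fix the orthonormal basis $\{I,J,K=IJ\}$ with $I$ the slice on which $P$ preserves $\C_I$, so all $a_\ell\in\C_I$, and restrict the conjugating element to $r=e^{I\psi}$. Because $e^{I\psi}$ and each $a_\ell$ lie in the commutative slice $\C_I$, the same computation gives $P(e^{-I\psi}qe^{I\psi})=e^{-I\psi}P(q)e^{I\psi}$ and hence $|P(e^{-I\psi}qe^{I\psi})|=|P(q)|$. Geometrically this conjugation fixes $\RE q$ and acts on $\IM q$ as a rotation about the $I$-axis: it leaves $\theta$ and $\theta_1$ fixed and shifts $\theta_2$ by $2\psi$. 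Since $\psi$ is arbitrary, $|P(q)|$ is independent of $\theta_2$, so integrating out $\theta_2\in[0,2\pi)$ contributes the factor $2\pi$ and turns $\frac{1}{2\pi^2}$ into $\frac{1}{\pi}$. Choosing the representative $\theta_2=\frac{\pi}{2}$ gives $q=\cos\theta+\sin\theta(\cos\theta_1\,I+\sin\theta_1\,K)=\cos\theta+Ie^{J\theta_1}\sin\theta$, using $Ie^{J\theta_1}=I\cos\theta_1+IJ\sin\theta_1=\cos\theta_1\,I+\sin\theta_1\,K$, which is exactly the claimed double integral.

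The one genuinely delicate point is the bookkeeping in part (2): one must check that conjugation by $e^{I\psi}$ acts on the imaginary unit sphere precisely as a rotation of the azimuth $\theta_2$ about the $I$-axis while fixing the polar angle $\theta_1$, as this is what guarantees the reduced integrand is an honest function of $(\theta,\theta_1)$ read off along a single slice $\theta_2=\mathrm{const}$. The exceptional loci --- the antipodal points $q=\pm1$ and the zero set of $P$ where $\log|P|$ diverges --- cause no trouble, since they have measure zero (resp.\ the integral is understood as an improper Riemann integral, whose existence is furnished by Theorem~\ref{thm:existence}) and the invariance invoked is pointwise, so Fubini applies on the complement of these null sets.
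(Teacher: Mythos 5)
Your proof is correct and follows essentially the same route as the paper: both rest on the observation that real (resp.\ $\C_I$-valued) coefficients make $|P|$ invariant under arbitrary conjugation (resp.\ conjugation by elements of $\C_I$, which rotates the azimuthal angle $\theta_2$ about the $I$-axis), and then integrate out the redundant angles in the hyperspherical Haar measure $\frac{1}{2\pi^2}\sin^2\theta\sin\theta_1\,\dif\theta\,\dif\theta_1\,\dif\theta_2$; indeed the paper's choice $\phi=\theta_2/2-\pi/4$ in part (2) is exactly your rotation carrying each point to the representative with $\theta_2=\pi/2$. The only cosmetic difference is that for part (1) the paper passes through the exponential-coordinate formula of Theorem~\ref{thm:mahlerexp} and a spherical change of variables, whereas you work directly with~\eqref{eq:hyperspherical}.
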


\begin{proof}
Let $\theta=\sqrt{\theta_1^2+\theta_2^2+\theta_3^2}$. If $P$ is slice preserving, then the coefficients of $P$ are real.  Since $e^{i \theta_1+j\theta_2+k\theta_3}$ is similar to $e^{i\theta}$, we have $|P(e^{i\theta_1+j\theta_2+k\theta_3})|=|P(e^{i\theta})|$. 
Changing the variables with spherical coordinates  $\theta_1=\theta\cos \psi_1$, $\theta_2=\theta\sin \psi_1\cos\psi_2$ and $\theta_3=\theta\sin\psi_1\sin\psi_2$, one obtains from Theorem~\ref{thm:mahlerexp} that
  \begin{align*}
    m_{\mbh}(P)=&\frac{1}{2\pi^2}\int_{\theta_1^2+\theta_2^2+\theta_3^2\leq \pi^2}\log|P(e^{i\theta})|\frac{\sin^2\theta}{\theta^2}\dif \theta_1\dif\theta_2\dif\theta_3\\
    =&\frac{1}{2\pi^2}\int_{0}^{2\pi}\int_{0}^{\pi}\int_0^{\pi}\log|P(e^{i\theta})|\sin^2\theta\sin \psi_1\dif \theta\dif \psi_1\dif \psi_2\\
    =&\frac{2}{\pi}\int_0^{\pi}\log |P(e^{i \theta})|\sin^2\theta\dif \theta.
  \end{align*}
The proof of $Z_{\mbh}(P,s)$ is the same. This proves the first claim. 

For the second claim, let $P$ be one-slice preserving on $\mbc_I$, then the coefficients of $P$ are in $\mbc_{I}$. We may represent $x$ in the hyperspherical form
\[x=\cos\theta+\sin\theta\cos \theta_1 I+\sin \theta\sin\theta_1\cos \theta_2 J +\sin \theta\sin\theta_1\sin\theta_2 K,\]
where $I\perp J$ and $K=IJ$. Let $\phi=\theta_2/2-\pi/4$ and $q=e^{I\phi}\in \mbc_I$, then a direct calculation shows that $x$ is similar to $q\inv xq=\cos\theta+Ie^{J\theta_1}\sin\theta$. So for any $x\in \mbt^1(\mbh)$, we have $|P(x)|=|P(\cos\theta+Ie^{J\theta_1}\sin\theta)|$. Again changing the variable, we get
\begin{align*}
    m_{\mbh}(P)=&\frac{1}{2\pi^2}\int_{0}^{2\pi}\int_{0}^{\pi}\int_0^{\pi}\log\abs{P(x)}\sin^2\theta\sin\theta_1\dif \theta\dif \theta_1\dif \theta_2\\
    =&\frac{1}{\pi}\int_{0}^{\pi}\int_0^{\pi}\log |P(\cos\theta+Ie^{J\theta_1}\sin\theta)|\sin^2\theta\sin \theta_1\dif \theta\dif \theta_1.\qedhere
  \end{align*}
\end{proof}

We now come to the proof of the part (1) in Theorem~\ref{thm:Jensenformulas}.
\begin{proof}[Proof of (1) in Theorem~\ref{thm:Jensenformulas}]
Let $\alpha\in \mbh$ be any quaternion. Then there exists $q_0\in\mbh$ such that $\alpha$ is similar to $\alpha_0=q_0\inv \alpha q_0\in \mbc$. So 
\begin{align*}
    m_{\mbh}(x-\alpha)=&\int_{\mbt^1(\mbh)}\log|q_0\inv xq_0-\alpha_0|\dif \mu\\
    =&\int_{\mbt^1(\mbh)}\log| x-\alpha_0|\dif \mu=m_{\mbh}(x-\alpha_0).
\end{align*}
This means we may always assume that $\alpha\in\C$. For the next step, we set $\alpha=|\alpha|e^{i\theta}$. Writing the integral by the $\SU(2)$-model of unit quaternions, we get
    \begin{align*}
        m_{\mbh}(&x-\alpha)
        =\frac{1}{8\pi^2}\int_{|z|^2+|w|^2=1}\log\bigl(|z-|\alpha|e^{i\theta}|^2+|w|^2\bigr)\,\frac{\dif z}{z}\dif w\dif \overline{w}\\
        &=\frac{1}{8\pi^2}\int_{|z|^2+|w|^2=1}\log\bigl(|ze^{-i\theta}-|\alpha||^2+|w|^2\bigr)\,\frac{\dif z}{z}\dif w\dif \overline{w}=m_{\mbh}(x-|\alpha|).
    \end{align*}
Now we can restrict our attention only to the case where $\alpha\in \mbr$. In this scenario, the polynomial $x-\alpha$ is slice-preserving. From Proposition \ref{prop:pxtheta}, we find
\begin{align*}
    m_{\mbh}(x-\alpha)=&\frac{2}{\pi}\int_0^{\pi}\log|e^{i\theta}-\alpha|\sin^2\theta\dif \theta\\
    =&\frac{1}{\pi}\int_0^{\pi}\log|1-2\alpha\cos\theta+\alpha^2|\sin^2\theta\dif \theta.
\end{align*}
Thus, the formula follows directly from Jensen's formula \ref{lem:jenformula}.
\end{proof}
We can also evaluate the quaternionic zeta Mahler measure of $x-\alpha$ for $\alpha\in\mbh$. This offers a quaternion analogue of Akatsuka's results~\cite[Theorem 1, 2]{Aka}.
\begin{thm}\label{thm:zetameaofx}
    Let $\alpha\in \mbh$. The quaternionic zeta Mahler measure of $x-\alpha$ is given below.
    \begin{enumerate}
        \item If $\abs{\alpha}=1$ and $\RE s>-3$, we have
        \[
        Z_{\mbh}(x-\alpha;s)=\frac{2^{2+s}}{\sqrt{\pi}}\frac{\Gamma(\frac{s}{2}+\frac{3}{2})}{\Gamma(\frac{s}{2}+3)}.
        \]
        The right-hand side gives the meromorphic continuation of $Z_{\mbh}(x-\alpha;s)$ to all $s\in\C$.
        \item If $\abs{\alpha}<1$ and $s\in\C$, then 
        \[
        Z_{\mbh}(x-\alpha;s)=(1-|\alpha|)^s{}_{2}F_{1}\left(-\frac{s}{2},\frac{3}{2},3;-\frac{4|\alpha|}{(1-|\alpha|)^2}\right).
        \]
        
        \item If $\abs{\alpha}>1$, and $s\in\C$, then 
        \[
        Z_{\mbh}(x-\alpha;s)=(|\alpha|-1)^s{}_{2}F_{1}\left(-\frac{s}{2},\frac{3}{2},3;-\frac{4|\alpha|}{(1-|\alpha|)^2}\right).
        \]
        
    \end{enumerate}
\end{thm}

\begin{proof}
    Using the same reasoning as in the previous proof, we can safely assume that $\alpha\in \mbr^+$. Referring to Proposition \ref{prop:pxtheta}, we have
    \[Z_{\mbh}(x-\alpha;s)=\frac{2}{\pi}\int_{0}^{\pi}\abs{1-2\alpha\cos\theta+\alpha^2}^{\frac{s}{2}}\sin^2\theta\dif\theta.\]
    
    Let $\alpha=1$, then for $\RE s>-3$, one has
\[
Z_{\mbh}(x-\alpha;s)=\frac{2^{1+\frac{s}{2}}}{\pi}\int_{0}^{\pi}(1-\cos\theta)^{\frac{s}{2}}\sin^2\theta\dif\theta.
\]
Changing the variable by $u=\sin^2\frac{\theta}{2}$, then 
\[
1-\cos\theta=2u,\,\sin^2\theta=4u(1-u),\,\dif\theta=\dfrac{\dif u}{\sqrt{u(1-u)}}.
\]
The original integral becomes
\begin{align*}
     Z_{\mbh}(x-\alpha;s)
     &=\frac{2^{s+3}}{\pi}\int_0^1 u^{\frac{s+1}{2}}(1-u)^{\frac12}du\\
&=\frac{2^{s+2}}{\sqrt{\pi}}\frac{\Gamma\left(\frac{s}{2}+\frac32\right)}{\Gamma(\frac{s}{2}+3)}. 
\end{align*}
This quotient of Gamma functions is meromorphic in $s$ and gives the continuation of $Z_{\mbh}(x-\alpha;s)$ beyond the half-plane $\RE s>-3$. Its possible poles are simple and occur at $s=-3,-5,-7,\dots$.

If $0<\alpha<1$, changing the variable to $u=\sin^2\frac{\theta}{2}$ again, similarly we have 
\begin{align*}
    Z_{\mbh}(x-\alpha;s)&=\frac{8}{\pi}\int_0^1((1-\alpha)^2+4\alpha u)^{\frac{s}{2}}\sqrt{u(1-u)}\dif u\\
    &=\frac{8(1-\alpha)^s}{\pi}\int_0^1u^{\frac{1}{2}}(1-u)^{\frac{1}{2}}\Big(1+\frac{4\alpha}{(1-\alpha)^2}u\Big)^{\frac{s}{2}}\dif u\\
    &=(1-\alpha)^s{}_{2}F_{1}\left(-\frac{s}{2},\frac{3}{2},3;-\frac{4\alpha}{(1-\alpha)^2}\right).
\end{align*}
Here we use the integral representation~\cite[Example~14.6.1]{WW}, valid for $\RE(c)>\RE(b)>0$,
\[
\frac{\Gamma(b)\Gamma(c-b)}{\Gamma(c)}\,{}_2F_1(a,b;c;z)
=\int_{0}^{1} x^{b-1}(1-x)^{c-b-1}(1-zx)^{-a}\,dx.
\]

If $\alpha>1$, we have the functional equation $Z_{\mbh}(x-\alpha^{-1};s)=\alpha^{-s}Z_{\mbh}(x-\alpha;s)$. This reduces to the above case.
\end{proof}

We arrive at proof of the part (2) in Theorem~\ref{thm:Jensenformulas}.
\begin{proof}[Proof of (2) in Theorem~\ref{thm:Jensenformulas}]
    Again, since every quaternion is similar to a complex number, without loss of generality we may assume that $\alpha=ae^{i\phi}\in \mbc$. By Proposition~\ref{prop:pxtheta}, we have 
    \begin{align*}
        m_{\mbh}(x^2-2&\RE \alpha\cdot x+|\alpha|^2)=\frac{2}{\pi}\int_0^{\pi}\log|e^{2i\theta}-2\RE \alpha\cdot e^{i\theta}+|\alpha|^2|\sin^2\theta\dif \theta\\
        &=\frac{2}{\pi}\int_0^{\pi}(\log|e^{i\theta}-ae^{i\phi}|+\log|e^{i\theta}-ae^{-i\phi}|)\sin^2\theta\dif \theta.
    \end{align*}
    It is evident from Lemma \ref{lem:jenformula} that the last integral is $\mathcal{J}(\alpha)+\mathcal{J}(\overline{\alpha})$.
\end{proof}

To give the final proof of the part (3) in Theorem~\ref{thm:Jensenformulas}, we start with the following theorem.

\begin{thm}\label{thm:pq}
  Let $P(x)\in\mbr[x]$ be a slice-preserving polynomial and $Q(x)$ be any quaternion slice regular polynomial. Then we have
  \begin{align*}
    m_{\mbh}&(P\ast Q)=m_{\mbh}(Q\ast P)\\
  &=m_{\mbh}(PQ)=m_{\mbh}(QP)=m_{\mbh}(P)+m_{\mbh}(Q). 
  \end{align*}
\end{thm}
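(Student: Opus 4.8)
The plan is to reduce each of the four products to the single pointwise product $p\mapsto P(p)Q(p)$ at the level of absolute values, and then to combine multiplicativity of the quaternionic norm with the integrability guaranteed by Theorem~\ref{thm:existence}.

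First I would record the decisive consequence of $P$ having real coefficients: $P$ is slice preserving, so for any $p\in\mbt^1(\mbh)$ lying in a slice $\mbc_I$ one has $P(p)\in\mbc_I$. Since $\mbc_I\cong\C$ is commutative, $P(p)$ and $p$ commute, whence $P(p)^{-1}pP(p)=p$ whenever $P(p)\neq0$. Feeding this into Lemma~\ref{lem:pointwiseprod} gives $(P\ast Q)(p)=P(p)\,Q(P(p)^{-1}pP(p))=P(p)Q(p)$ off the zero set of $P$. For $Q\ast P$ I would instead use that conjugation commutes with evaluation of a real polynomial: writing $s=Q(p)$, the element $s^{-1}ps$ is similar to $p$, and since the coefficients of $P$ are central, $P(s^{-1}ps)=s^{-1}P(p)s$; hence Lemma~\ref{lem:pointwiseprod} yields $(Q\ast P)(p)=Q(p)\,s^{-1}P(p)s=P(p)Q(p)$ as well, again away from the zeros of $P$.

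Next I would pass to absolute values. By multiplicativity of the quaternionic norm, $|(PQ)(p)|=|(QP)(p)|=|P(p)|\,|Q(p)|$ for every $p$, and by the previous paragraph the same identity holds for $|(P\ast Q)(p)|$ and $|(Q\ast P)(p)|$ for all $p$ outside $\{P(p)=0\}$. Because $P$ is real, this exceptional set is a finite union of spheres $\S_\alpha$ meeting $\mbt^1(\mbh)\cong\S^3$, hence has Haar measure zero. Therefore, writing $R$ for any one of the four products, one obtains the almost-everywhere identity $\log|R(p)|=\log|P(p)|+\log|Q(p)|$ on $\mbt^1(\mbh)$.

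Finally I would integrate. By Theorem~\ref{thm:existence} the functions $\log|P|$ and $\log|Q|$ are separately integrable, their logarithmic singularities along the spherical zero sets being integrable, so the integral of their sum splits as a sum of integrals; integrating the displayed identity then gives $m_{\mbh}(R)=m_{\mbh}(P)+m_{\mbh}(Q)$ simultaneously for $R\in\{P\ast Q,\,Q\ast P,\,PQ,\,QP\}$. The one genuinely delicate point is the measure-theoretic bookkeeping around the zeros of $P$: one must verify that the exceptional set is null and that the separate integrability from Theorem~\ref{thm:existence} legitimizes splitting $\int(\log|P|+\log|Q|)$. Everything else rests on the commutation argument, and this is precisely where the hypothesis that $P$ is real is indispensable --- for a general slice regular $P$ the conjugated argument $P(p)^{-1}pP(p)$ does not return $p$ and $|Q|$ is not constant on similarity classes, so the identity breaks down.
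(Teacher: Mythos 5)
Your proof is correct and rests on exactly the same idea as the paper's (very terse) proof: because the coefficients of $P$ are real, hence central, all four products reduce pointwise to $P(p)Q(p)$, and additivity follows from multiplicativity of the quaternionic norm together with additivity of the integral. The only cosmetic difference is that you identify the $\ast$-products with the pointwise product via Lemma~\ref{lem:pointwiseprod} (note that for $Q\ast P$ the relevant exceptional set is the zero set of $Q$, not of $P$, though the zero case of that lemma makes the identity hold everywhere anyway), whereas the paper does this directly at the level of coefficients, leaving the measure-theoretic bookkeeping implicit.
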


\begin{proof}
    Since $P$ is slice preserving and having real coefficients, we know everything commutes. It turns out that $P\ast Q=Q\ast P$ (the $\ast$-product) is exactly the $PQ=QP$ (the non-commutative product).
\end{proof}

\begin{rmk}
We point out that $m_{\mbh}(P\ast Q)=m_{\mbh}(P)+m_{\mbh}(Q)$ is usually false if $P,Q$ are not slice-preserving polynomials. For instance, for $P(x)=x-i$ and $Q(x)=x+i$ it follows
\[m_{\mbh}((x-i)\ast (x+i))=m_{\mbh}(x^2+1)=-\frac{1}{2},\]
while
\[m_{\mbh}(x-i)+m_{\mbh}(x+i)=\frac{1}{2}.\]
We will see in Section \ref{sec:mahmeaofleftpol} it still fails even when $P=Q$. A counterexample is 
\[m_{\mbh}((x-i)*(x-i))=\frac{5}{6}\neq 2m_{\mbh}(x-i).\]
\end{rmk}

\begin{proof}[Proof of (3) in Theorem~\ref{thm:Jensenformulas}]
This is an immediate corollary of  Theorem~\ref{thm:pq}.
\end{proof}

\section{Mahler measures of slice regular polynomials}\label{sec:mahmeaofleftpol}
In this section, we will focus on Mahler measures of slice regular polynomials. 
We commence by establishing the existence of quaternionic Mahler measures. To validate the proof, we introduce an important lemma which is essential for demonstrating the existence of the complex Mahler measure.
\begin{lem}[Lemma 3.8 in Everest--Ward~\cite{EW}]~\label{lem:EW}
    Let $P\in\C[x_1^{\pm},\dots,x_n^{\pm}]$ be a complex Laurent polynomial. Then there exists $C,\delta>0$ depending only on $P$ such that for any $\varepsilon>0$, one has that 
    \[\mu_{\C}\left(\big\{(x_1,\dots,x_n)\in\mbt^{n}(\C)\,\big{|}\,\abs{P(x_1,\dots,x_n)}\leq\varepsilon\big\}\right)\leq C\varepsilon^{\delta},\]
where $\mu_{\C}$ represents the probability Haar measure on $\mbt^{n}(\C)$.
\end{lem}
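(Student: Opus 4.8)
The plan is to induct on the number of variables $n$, after a harmless normalization. Since multiplying $P$ by a monomial $x_1^{m_1}\cdots x_n^{m_n}$ does not change $\abs{P}$ on $\mbt^n(\C)$, I may assume $P\in\C[x_1,\dots,x_n]$ is an honest polynomial which genuinely involves $x_1$ (relabelling variables if necessary), and write
\[
P=\sum_{\ell=0}^{D}a_\ell(x_2,\dots,x_n)\,x_1^{\ell},\qquad a_D\not\equiv 0.
\]
The base case $n=1$ is a direct one-variable estimate: factoring $P(x_1)=a_D\prod_{h=1}^{D}(x_1-\alpha_h)$, the bound $\abs{P(x_1)}\le\varepsilon$ forces $\min_h\abs{x_1-\alpha_h}\le(\varepsilon/\abs{a_D})^{1/D}$; since $\{x_1\in\mbt^1(\C):\abs{x_1-\alpha}\le\rho\}$ is an arc of normalized length at most $\rho$, a union bound over the $D$ roots gives
\[
\mu_{\C}\!\left(\{\abs{P}\le\varepsilon\}\right)\le D\,(\varepsilon/\abs{a_D})^{1/D}.
\]
The essential point is that the constant depends only on $D$ and that the dependence on the leading coefficient is displayed explicitly.

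For the inductive step I would regard the leading coefficient $a_D$ as an $(n-1)$-variable Laurent polynomial and split $\mbt^{n-1}(\C)$ according to the size of $\abs{a_D}$ at a threshold $\eta>0$ to be optimized. On $A=\{\mathbf{y}:\abs{a_D(\mathbf{y})}\le\eta\}$ I bound the $x_1$-slice trivially by $1$ and apply the induction hypothesis to $a_D$, obtaining a measure at most $C_A\,\eta^{\delta_A}$. On the complement $\mathbf{y}\in B$ the leading coefficient is nonzero, so $P(\cdot,\mathbf{y})$ has degree exactly $D$ and the base-case estimate applies on each slice, giving inner measure at most $D\,(\varepsilon/\eta)^{1/D}$ uniformly in $\mathbf{y}\in B$. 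By Fubini,
\[
\mu_{\C}\!\left(\{\abs{P}\le\varepsilon\}\right)\le C_A\,\eta^{\delta_A}+D\,(\varepsilon/\eta)^{1/D}.
\]
Choosing $\eta=\varepsilon^{1/(D\delta_A+1)}$ balances the two terms and yields the claim with $\delta=\delta_A/(D\delta_A+1)>0$ and an explicit constant $C$.

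I expect the principal difficulty to lie not in any isolated estimate but in the bookkeeping of the induction: one must guarantee at every stage that $a_D\not\equiv0$, so that the expansion and the degree-$D$ factorization are legitimate, that the one-variable constant is genuinely uniform across slices while carrying the correct $\abs{a_D}^{-1/D}$ weight, and that the inductive hypothesis is invoked on the honestly lower-dimensional polynomial $a_D$ (in particular that $a_D$ does not vanish identically on the smaller torus, which follows from $P\neq0$). A more conceptual but less elementary alternative would replace the induction by a \L{}ojasiewicz inequality for the real-analytic function $\abs{P}^2$ on the compact torus: since $P\neq0$ its zero set $Z=\{P=0\}\cap\mbt^n(\C)$ is a proper analytic subset, $\{\abs{P}\le\varepsilon\}$ is contained in a tubular neighborhood of $Z$ of radius $O(\varepsilon^{2/\alpha})$, and the measure of such a neighborhood is $O(\varepsilon^{\delta})$; the inductive route, however, keeps all exponents and constants explicit and self-contained.
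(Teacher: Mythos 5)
Your proof is correct. Note that the paper itself does not prove this statement at all: it is imported verbatim as Lemma 3.8 of Everest--Ward~\cite{EW}, so there is no internal argument to compare against; your proposal supplies a self-contained proof, and its structure (induction on the number of variables, one-variable case by factoring and an arc-length/union bound, inductive step by slicing off the leading coefficient $a_D$ and splitting the base torus at a threshold $\eta$ for $\abs{a_D}$) is essentially the same strategy as the proof in the cited book. The key estimates all check out: the normalized measure of $\{x_1\in\mbt^1(\C):\abs{x_1-\alpha}\le\rho\}$ is indeed at most $\rho$ (for $\rho<1$ the intersection is an arc of chord length at most $2\rho$, hence of angle at most $2\arcsin\rho\le\pi\rho$; for $\rho\ge 1$ the bound is trivial), the slice bound $D(\varepsilon/\eta)^{1/D}$ is uniform over $\{\abs{a_D}>\eta\}$ because the dependence on the leading coefficient is explicit, and the choice $\eta=\varepsilon^{1/(D\delta_A+1)}$ does balance the two terms, giving $\delta=\delta_A/(D\delta_A+1)$ and $C=C_A+D$. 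Two trivial loose ends you should tie up for completeness: the degenerate case where $P$ is a monomial (so that after normalization it is a nonzero constant $c$, where the set is empty for $\varepsilon<\abs{c}$ and the bound $\varepsilon/\abs{c}$ works otherwise), which serves as the true base of the induction; and a one-line remark that the set $\{\abs{P}\le\varepsilon\}$ is closed, hence measurable, so that Fubini applies.
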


The following lemma introduces a quaternionic analogue for analysing the singularities in one single variable, similar to its complex counterpart shown previously.
\begin{lem}\label{lem:measurebound}
    Let $P(x)$ be a non-zero slice regular quaternionic polynomial. Let $\mu_{\mbh}$ be the probability Haar measure of the unit quaternion. Then  there exists $C,\delta>0$ depending only on $P$ such that for any $\varepsilon>0$, one has
    \[\mu_{\mbh}\left(\big\{q\in\mbt^{1}(\mbh)\,\big{|}\,\abs{P(q)}\leq\varepsilon\big\}\right)\leq C\varepsilon^{\delta}.\]
\end{lem}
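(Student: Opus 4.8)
The plan is to reduce the quaternionic statement to the complex Lemma~\ref{lem:EW} by exploiting the fact that the modulus $|P(q)|$ on the unit quaternion torus can be controlled by the restriction of an associated complex polynomial to the unit complex torus. First I would invoke the Splitting Lemma~\ref{lem:splitlemma}: fixing $I,J\in\S$ with $I\perp J$, write $P_I(z)=F(z)+G(z)J$ for holomorphic $F,G\colon B_I\to\C_I$. Since $|P_I(z)|^2=|F(z)|^2+|G(z)|^2$, the small values of $|P|$ on the slice $\C_I$ are governed by the two complex polynomials $F$ and $G$ simultaneously. The key point is the Representation Formula~\ref{lem:repformula}, which lets me express $|P(x+yK)|$ for an arbitrary imaginary unit $K$ as a bounded linear combination of $|P(x\pm yI)|$; consequently the sublevel set $\{|P|\le\varepsilon\}$ on all of $\mbt^1(\mbh)$ is contained in a fixed dilate of the corresponding sublevel set on the single slice $\C_I$, up to a bounded factor coming from the coefficients $(I-J)^{-1}$.

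Next I would translate the slice computation into a genuine complex-torus measure estimate. Using the $\SU(2)$/hyperspherical parametrization of Section~\ref{sec2}, every unit quaternion $q$ is similar to a point on $\C_I\cap\mbt^1(\mbh)$, and the Haar measure $\mu_{\mbh}$ disintegrates over these slices with a bounded density (the $\sin^2\theta\sin\theta_1$ weight appearing in Proposition~\ref{prop:pxtheta}). Thus it suffices to bound $\mu_{\mbh}(\{|P|\le\varepsilon\})$ by a constant times the one-variable complex Haar measure of $\{z\in\mbt^1(\C_I)\,:\,|F(z)|^2+|G(z)|^2\le C'\varepsilon^2\}$. Since $|F(z)|^2+|G(z)|^2\ge|F(z)|^2$ and also $\ge|G(z)|^2$, this set is contained in $\{|F(z)|\le C'\varepsilon\}$ (or in the analogous set for $G$, whichever of $F,G$ is non-zero), and Lemma~\ref{lem:EW} applied to that single complex Laurent polynomial in one variable gives a bound $C\varepsilon^\delta$.

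A subtlety I would address carefully is that $P$ being a non-zero slice regular polynomial guarantees that its symmetrization $P^s=P\ast P^c$ is a non-zero \emph{real} polynomial, and $F\overline{F}+G\overline{G}$ on the slice is exactly $P^s_I$; so at least one of $F,G$ is a non-zero complex polynomial, which is precisely the hypothesis needed to apply Lemma~\ref{lem:EW}. The measure-zero spherical roots (where $P^s$ vanishes on a whole $2$-sphere) contribute at most a set of the same sublevel type and do not spoil the power-law bound. I would absorb all the slice-independent constants (the density bound, the factor from the Representation Formula, and the constant from Lemma~\ref{lem:EW}) into a single $C$ and take $\delta$ directly from the complex lemma.

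The main obstacle will be the second step: making rigorous the claim that small values of $|P|$ over the full three-sphere are uniformly controlled by small values on a single slice. The Representation Formula is an exact identity, but one must verify that the linear-combination coefficients are bounded uniformly in the imaginary unit $K$ ranging over $\S$, so that the dilation factor relating the two sublevel sets is a genuine constant rather than something blowing up near degenerate configurations (e.g. $K$ approaching the plane spanned by $I$). Once this uniform boundedness is established, the reduction to the complex case via Lemma~\ref{lem:EW} is routine.
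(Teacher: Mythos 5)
Your reduction to a single slice fails at the step you yourself flag as the main obstacle, and the problem is not uniformity of the coefficients: the claim is false in the direction you need it. The Representation Formula expresses $P(x+yK)$ in terms of $P(x\pm yI)$, so it bounds $|P|$ \emph{off} the slice from above by its values \emph{on} the slice; it cannot convert a small value of $|P|$ at a point off $\C_I$ into a small value of $|P|$ on $\C_I$. Concretely, take $P(q)=q-j$ and $I=i$: on $\mbt^1(\mbh)\cap\C_i$ one has $|P(z)|^2=|z|^2+1=2$, so your slice sublevel set $\{z\in\C_i : |P(z)|\le C'\varepsilon\}$ is \emph{empty} for all small $\varepsilon$, while $\{q\in\mbt^1(\mbh) : |q-j|\le\varepsilon\}$ is a spherical cap around $j$ of positive measure (of order $\varepsilon^3$). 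Hence no inclusion of the quaternionic sublevel set into a bounded dilate of the slice sublevel set can hold. The underlying phenomenon: restricted to a similarity sphere $x+y\S$, a slice regular polynomial is an affine function $b+Kc$ of the imaginary unit $K$, which can vanish at one isolated $K$ while $|P(x\pm yI)|$ stays bounded below --- exactly the isolated, non-spherical zeros (as for $(q-i)\ast(q-j)$) that make the multivariable remark in Section~\ref{sec7} necessary.

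The paper's proof repairs precisely this point using the symmetrization, an object you mention but then set aside. Step one: for a \emph{real} polynomial the sublevel set $\{|P|\le\varepsilon\}$ is a union of similarity spheres $\S_q$, so its $\mu_{\mbh}$-measure is controlled by the $\mu_{\C}$-measure of its trace on the unit circle (the $\sin^2\theta$ density is bounded), and Lemma~\ref{lem:EW} applies. Step two: for general slice regular $P$, the Splitting Lemma gives $|P(q)|^2=|F(q)|^2+|G(q)|^2$ and $P^s(q)=F(q)\overline{F(\overline{q})}+G(q)\overline{G(\overline{q})}$ on the slice, whence Cauchy--Schwarz yields $|P^s(q)|\le|P(q)|\,|P(\overline{q})|\le M|P(q)|$ with $M=\max_{\mbt^1(\mbh)}|P|$. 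Therefore $\{|P|\le\varepsilon\}\subseteq\{|P^s|\le M\varepsilon\}$, and since $P^s$ is a nonzero real polynomial, step one applied to $P^s$ gives the bound $C\varepsilon^{\delta}$. Your observation that $P\neq 0$ forces $P^s\neq 0$ is exactly the nondegeneracy needed here, but it must be fed into this inclusion (small $|P|$ at $q$ forces small $|P^s|$ at the \emph{same} point $q$, and sublevel sets of $P^s$ are slice-symmetric), not into an application of Lemma~\ref{lem:EW} to $F$ alone on a single slice.
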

\begin{proof}
    Let us firstly prove the assertion when $P$ is a slice-preserving polynomial. Let $q\in\mbt^{1}(\mbh)$ be with $\abs{P(q)}\leq\varepsilon$. Then for any $p\in\S_{q}$, we still have $\abs{P(p)}\leq\varepsilon$. Thus
    \[
    \mu_{\mbh}\left(\big\{q\in\mbt^{1}(\mbh)\,\big{|}\,\abs{P(q)}\leq\varepsilon\big\}\right)\leq 2\mu_{\C}\left(\big\{q\in\mbt^{1}(\C)\,\big{|}\,\abs{P(q)}\leq\varepsilon\big\}\right),
    \]
where $\mu_{\C}$ is probability Haar measure of the unit complex torus $\mbt^{1}(\C)$. By Lemma~\ref{lem:EW}, we have the bound
\[\mu_{\C}\left(\big\{q\in\mbt^{1}(\C)\,\big{|}\,\abs{P(q)}\leq\varepsilon\big\}\right)\leq C\varepsilon^{\delta}.\]

Secondly, we prove the assertion when $P$ is an arbitrary slice regular polynomial. Let $I,J\in\S$ be two orthogonal imaginary units. By the Splitting Lemma~\ref{lem:splitlemma}, there are two polynomials $F,G:\C_{I}\to\C_{I}$ such that  
\[P(q)=F(q)+G(q)J\quad\text{and}\quad\abs{P(q)}^2=\abs{F(q)}^2+\abs{G(q)}^2,\]
Meanwhile, its symmetrization is a slice-preserving polynomial with
    \[P^{s}(q)=F(q)\overline{F(\overline{q})}+G(q)\overline{G(\overline{q})}\quad\text{and}\quad\abs{P^{s}(q)}^2=\abs{F(q)\overline{F(\overline{q})}+G(q)\overline{G(\overline{q})}}^2. \]
Via the Cauchy--Schwarz inequality, one finds that $\abs{P^{s}(q)}\leq\abs{P(q)}\abs{P(\overline{q})}$. So that if $\abs{P(q)}\leq \varepsilon$, one has 
\[\abs{P^{s}(q)}\leq M\varepsilon,\quad\text{where } M=\max_{q\in\mbt^{1}(\mbh)}\abs{P(q)}.\]
However the measure of the latter is controlled by $\leq C\varepsilon^{\delta}$. Thus we know
\[\mu_{\mbh}\left(\big\{q\in\mbt^{1}(\mbh)\,\big{|}\,\abs{P(q)}\leq\varepsilon\big\}\right)\leq C'\varepsilon^{\delta}.\qedhere
\]
\end{proof}

At this stage, we are now prepared to demonstrate Theorem~\ref{thm:existence}.
\begin{proof}[Proof of Theorem~\ref{thm:existence}]
From Lemma~\ref{lem:measurebound} we see 
\[\int_{\abs{P}\leq\varepsilon}\log\abs{P}\dif\mu(\mbt^1(\mbh))=O(\varepsilon^{\delta}\log\varepsilon),\]
as $\varepsilon\to 0$. Therefore, the integral converges absolutely as an improper Riemann integral.

Next we prove the asserted inequality. Let $z=\cos\theta+I\sin\theta$, then again by the Splitting Lemma~\ref{lem:splitlemma}, there are two polynomials $F,G:\C_{I}\to\C_{I}$ such that  
\[P(z)=F(z)+G(z)J\quad\text{and}\quad\abs{P(z)}^2=\abs{F(z)}^2+\abs{G(z)}^2,\]
and then
    \begin{align*}      
    \mH(P)&=\frac{1}{\pi}\int_{0}^{2\pi}\int_{\S}\log\abs{P(\cos\theta+I\sin\theta)}\sin^2\theta\dif\mu(I)\dif\theta\\
    &=\frac{1}{2\pi}\int_{0}^{2\pi}\int_{\S}\log(\abs{F(z)}^2+\abs{G(z)}^2)\sin^2\theta\dif\mu(I)\dif\theta.
    \end{align*}
Using Cauchy--Schwarz once more, we find
\[
 2\mH(P)\geq \frac{1}{2\pi}\int_{0}^{2\pi}\int_{\S}\log\abs{F(z)\overline{F(\overline{z})}+G(z)\overline{G(\overline{z})}}^2\sin^2\theta\dif\mu(I)\dif\theta=\mH(P^{s}),
\]
where the equality holds if and only if there exists some fixed $\lambda\in \mbc_I$ such that $\overline{F(\overline{z})}=\lambda F(z)$ and $\overline{G(\overline{z})}=\overline{\lambda}G(z)$ for any $z\in\mbh$ (using the Identity Principle~\ref{lem:identityprcp}). As $I,J$ vary over $\S$, we know that $\lambda=1$ and $\overline{G(\overline{z})}=0$. This is exactly $P(z)=\overline{P(\overline{z})}$, that is, $P$ is slice preserving.
\end{proof}

In light of Theorem~\ref{thm:existence}, we now turn to the task of determining the Mahler measure of slice regular polynomials. Using the Fundamental Theorem of Algebra for Quaternions~\ref{thm:FTAQ}, we arrive at the following problem.
\begin{problem}
   Given a slice regular quaternionic polynomial $P(x)$ with its complete factorization
  \[(q^2-2\RE \alpha_{1}\cdot q+\abs{\alpha_{1}}^2)^{m_1}\ast\dots(q^2-2\RE \alpha_{\ell}\cdot q+\abs{\alpha_{\ell}}^2)^{m_\ell}\ast\sideset{}{^*}\prod (q-\beta_h)c.\] 
Determine its Mahler measure $\mH(P)$.
\end{problem}
In view of Theorem~\ref{thm:pq}, our objective is narrowed down to find the Mahler measure of the $\ast$-product of monomials. In contrast to real polynomials, there are no straightforward solutions, since $m_{\mbh}(P\ast Q)=m_{\mbh}(P)+m_{\mbh}(Q)$ does not hold in general.

While the general task can indeed be quite challenging, there are specific cases where the integrals involved become simpler to deal with. For example, when $P$ is one-slice preserving, the computation is more approachable. The following proof provides us with an example illustrating how to obtain the Mahler measures of $(x-i)^{*n}$.

\begin{proof}[Proof of Theorem~\ref{thm:iteratedi}]
    Let $q=a+rJ$ with $J=i \cos\theta+j\sin\theta\cos\psi+k\sin\theta\sin\psi$. By the Representation Formula~\ref{lem:repformula}, we get 
    \begin{equation}\label{eq:P*n}
        P^{*n}(q)=\frac{1-Ji}{2}P(a+ri)^n+\frac{1+Ji}{2}P(a-ri)^n.
    \end{equation}
    Let $u=P(a+ri)^n$ and $v=P(a-ri)^n$ denote the two complex polynomials in $a$ and $r$. Then
    \begin{align*}
        \Bigl|\frac{1-Ji}{2}u&+\frac{1+Ji}{2}v\Bigr|^{2}=\Bigl|\frac{1+\cos \theta}{2}u+\frac{1-\cos\theta}{2}v+\frac{w(\bar{u}-\bar{v})}{2}k\Bigr|^2\\
        &=\frac{(1+\cos\theta)^2}{4}|u|^2+\frac{(1-\cos\theta)^2}{4}|v|^2\\
        &\hspace*{4em}+2\RE\frac{1-\cos^2\theta}{4}u\bar{v}+|w|^2\frac{|u|^2+|v|^2-2\RE u\bar{v}}{4},
    \end{align*}
    where $w=\sin \theta\cos\psi+i\sin\theta\sin\psi$. After simplification, we obtain
    \[
     \Bigl|\frac{1-Ji}{2}u+\frac{1+Ji}{2}v\Bigr|^{2}=\frac{1+\cos\theta}{2}|u|^{2}+\frac{1-\cos\theta}{2}|v|^{2}.
    \]
    Besides, 
    \begin{gather*}
    |u|^2=|a+(r-1)i|^{2n}=(a^2+(r-1)^2)^n=2^n(1-r)^n,\\
    |v|^2=|a-(r+1)i|^{2n}=(a^2+(r+1)^2)^n=2^n(1+r)^n,
    \end{gather*}
    using $a^2=1-r^2$. Plugging into Eq.~\eqref{eq:P*n}, we get
    \[
        |P^{*n}(a+rJ)|^2=2^n\left(A^+(r)-A^-(r)\cos\theta\right),
    \]
    where
    \[
    A^+(r)=\sum_{\ell \text{ even}}\binom{n}{\ell}r^{\ell},\quad A^-(r)=\sum_{\ell \text{ odd}}\binom{n}{\ell}r^{\ell}.
    \]
    
    Then by Proposition~\ref{prop:pxtheta}, we have  
    \begin{align*}
        &\mH(P^{*n})=\frac{1}{\pi}\int_0^1\int_0^{\pi}\log|P^{*n}(a+rJ)|^2\frac{r^2\sin\theta}{\sqrt{1-r^2}}\dif \theta\dif r\\
        =&\frac{n}{2}\log 2+\frac{1}{\pi}\int_0^1\int_0^{\pi}\log\left(A^+(r)-A^-(r)\cos\theta\right)\frac{r^2\sin\theta}{\sqrt{1-r^2}}\dif \theta\dif r.
    \end{align*}  
Note that for any real numbers $a>b>0$, we have 
\begin{equation}\label{eq:logabtheta}
    \int_0^{\pi}\log (a-b\cos\theta)\sin \theta\dif \theta
    =\log(a^2-b^2)+\frac{a}{b}\log\Big(\frac{a+b}{a-b}\Big)-2.
\end{equation}
 It can be deduced that 
    \begin{align*}
        \mH(P^{*n})=\frac{n}{2}\log 2+\frac{1}{\pi}&\int_0^1\big(\log(A^+(r)^2-A^-(r)^2)-2\big)\frac{r^2}{\sqrt{1-r^2}}\dif r\\
    &+\frac{1}{\pi}\int_0^1\left(\frac{A^+(r)}{A^-(r)}\log\frac{A^+(r)+A^-(r)}{A^+(r)-A^-(r)}\right)\frac{r^2}{\sqrt{1-r^2}}\dif r.
    \end{align*}
Changing variable with $r=\sin\theta$ in the first integral, we derive that \[A^+(r)^2-A^-(r)^2=\cos^{2n}\theta.\]
So we know the first line turns out to be $-\frac{1}{4}(n+2)$. In the second integral, substituting $r=\tanh t$ we find
\[\frac{A^+(r)}{A^-(r)}=\coth{nt},\quad\log\frac{A^+(r)+A^-(r)}{A^+(r)-A^-(r)}=2nt.\]
These yield the expected formula.
\end{proof}

Theorem~\ref{thm:iteratedi} can be extended to all integer powers by using regular reciprocals.
\begin{prop}\label{prop:reciprocali} Let $n\in\mathbb{Z}$. We set $(x-i)^{*0}=1$. Then
\[
m_{\mbh}((x-i)^{*n})=
\begin{cases}
0&\text{if }n=0,\\
\dfrac{2n}{\pi}\displaystyle\int_0^{\infty}t\coth nt\tanh^2t\sech t\dif t-\dfrac{2+n}{4}&\text{if }n\neq0.
\end{cases}
\]
\end{prop}
\begin{proof}
The case $n=0$ is immediate, and the case $n>0$ is Theorem~\ref{thm:iteratedi}. Let $n<0$ and let $(x-i)^{*n}$ denote the corresponding regular reciprocal power. Let $P(x)=x-i$. Its conjugate and symmetrization are
\[P^{c}(x)=x+i,\quad P^{s}(x)=x^2+1.\]
Thus
\[P^{*n}(x)=(x^2+1)^n(x+i)^{*(-n)}.\]
Therefore, using $\mH(x^2+1)=-1/2$, we obtain
\begin{align*}
    \mH(P^{*n})&=\mH((x+i)^{*(-n)})+n\mH(x^2+1)\\
    &=-\frac{2n}{\pi}\int_0^{\infty}t\coth(-nt)\tanh^2t\sech t\dif t-\frac{2+n}{4}\\
    &=\frac{2n}{\pi}\int_0^{\infty}t\coth nt\tanh^2t\sech t\dif t-\frac{2+n}{4}.\qedhere
\end{align*}
\end{proof}

\begin{cor}
    It can be seen that
    \begin{equation}\label{eq:mhn}
        \lim_{n\to+\infty}\frac{m_{\mbh}((x-i)^{*n})}{n}=\frac{1+2G}{\pi}-\frac{1}{4},\quad
        \lim_{n\to-\infty}\frac{m_{\mbh}((x-i)^{*n})}{n}=-\frac{1+2G}{\pi}-\frac{1}{4},
    \end{equation}
    where $G=L(\chi_{-4},2)$ is Catalan's constant.
\end{cor}

\begin{proof}
    By Proposition~\ref{prop:reciprocali}, for $n\neq0$ we have
\[
\frac{m_{\mbh}((x-i)^{*n})}{n}
=\frac{2}{\pi}\int_0^\infty t\coth nt\tanh^2t\sech t\dif t-\frac{2+n}{4n}.
\]
Notice that $\coth nt\to 1$ as $n\to+\infty$, while $\coth nt\to -1$ as $n\to-\infty$, and 
\[\int_0^{\infty}t\tanh^2t\sech t\dif t=\frac{1}{2}+G.\]
The two limits now follow by dominated convergence.
\end{proof}

Note that if we set $u=e^t$, then
\[\coth nt\tanh^2t\sech t=2u\frac{(u^{2n}+1)(u^2-1)^2}{(u^{2n}-1)(u^2+1)^3}.\]
By partial fractional decomposition we can always interpret $\mH((x-i)^{*n})$ with polylogarithms. In particular,
\[\mH(x-i)=\frac{1}{4},\quad \mH((x-i)^{*2})=\frac{5}{6},\]
\begin{align*}
  \mH((x-i)^{*3})&=\frac{31}{4}-\frac{12}{\pi}\big(\!\Li_2(\zeta_{6})-\Li_2(-\zeta_{6})-\Li_2(\zeta_{6}^2)+\Li_2(-\zeta_{6}^2)\big)\\
  &=\frac{31}{4}-2\pi,  
\end{align*}
where $\zeta_{k}=e^{\frac{2\pi i}{k}}$. And
\begin{align*}
\mH((x-i)^{*4})&=\frac{13}{3}-\frac{2\sqrt{2}}{\pi}\big(\!\Li_2(\zeta_{8})-\Li_2(-\zeta_{8})-\Li_2(\zeta_{8}^3)+\Li_2(-\zeta_{8}^3)\big)\\
&=\frac{13}{3}-\frac{\sqrt{2}}{2}\pi.
\end{align*}

Although no simple closed-form expression is expected for $\mH((x-i)^{*n})$, computations for small $n$ suggest the following result.
\begin{prop}
For any integer $n$, we have
    \[
     m_{\mbh}((x-i)^{*n}) \in \bar{\mathbb{Q}}+\bar{\mathbb{Q}}\pi
    \]
\end{prop}

\begin{proof}
   The case $n=0$ is immediate. Proposition~\ref{prop:reciprocali} implies that it suffices to treat the case $n>0$, as $\mH(x^2+1)=-1/2$. We first assume that $n$ is positive and odd. Changing the variable $u=e^t$ as above, by Theorem \ref{thm:iteratedi} we have
    \[m_{\mbh}((x-i)^{*n})=\frac{4n}{\pi}\int_1^{\infty}\frac{(u^{2n} + 1)(u^2 - 1)^2}{(u^{2n} - 1)(u^2 + 1)^3}\log u\dif u-\frac{2+n}{4}.\]
    By partial fractional decomposition, we have 
        \[\frac{(u^{2n} + 1)(u^2 - 1)^2}{(u^{2n} - 1)(u^2 + 1)^3}=\sum_{k=1}^{2n}\frac{a_k}{u-\zeta_{2n}^k}+\frac{b_1}{u-i}+\frac{b_2}{(u-i)^2}+\frac{c_1}{u+i}+\frac{c_2}{(u+i)^2},\]
        for some $a_k, b_1,b_2,c_1,c_2\in \bar{\mbq}$. Moreover, a direct computation shows that $a_k,b_2,c_2$ are real algebraic numbers with $a_k=a_{2n-k}=-a_{n-k}=-a_{n+k},b_1=c_1=0$ and $b_2=c_2=n/2$. Here we display the computation for $a_k\in \mbr$, the others are similar. In fact, $a_k$ can be expressed as 
        \[a_k=\Res_{u=\zeta_{2n}^k}\frac{(u^{2n} + 1)(u^2 - 1)^2}{(u^{2n} - 1)(u^2 + 1)^3}=\frac{2(\zeta_{2n}^{2k} - 1)^2}{\prod_{j\neq k}(\zeta_{2n}^j-\zeta_{2n}^k)(\zeta_{2n}^{2k} + 1)^3}.\]
        Taking conjugation, we get
        \begin{align*}
            \bar{a}_k=&\frac{2(\zeta_{2n}^{-2k} - 1)^2}{\prod_{j\neq k}(\zeta_{2n}^{-j}-\zeta_{2n}^{-k})(\zeta_{2n}^{-2k} + 1)^3}=\frac{2\zeta_{2n}^{2k}\prod_{j\neq k}(\zeta_{2n}^{j+k})(\zeta_{2n}^{2k}-1)^2}{\prod_{j\neq k}(\zeta_{2n}^k-\zeta_{2n}^j)(\zeta_{2n}^{2k}+1)^3}\\
            =&\frac{\zeta_{2n}^{2k}\zeta_{2n}^{k(2n-2)}\zeta_{2n}^{\frac{2n(2n+1)}{2}}}{(-1)^{2n-1}}\frac{2(\zeta_{2n}^{2k} - 1)^2}{\prod_{j\neq k}(\zeta_{2n}^j-\zeta_{2n}^k)(\zeta_{2n}^{2k} + 1)^3}=a_k.
        \end{align*}

        So we get 
        \begin{align*}
            \frac{(u^{2n} + 1)(u^2 - 1)^2}{(u^{2n} - 1)(u^2 + 1)^3}&=\sum_{k=1}^{(n-1)/2}a_k\Bigl(\frac{1}{u-\zeta_{2n}^k}-\frac{1}{u+\zeta_{2n}^k}+\frac{1}{u-\zeta_{2n}^{-k}}-\frac{1}{u+\zeta_{2n}^{-k}}\Bigr)\\
            &\qquad+b_2\Bigl(\frac{1}{(u-i)^2}+\frac{1}{(u+i)^2}\Bigr).
        \end{align*}
        By expanding directly, we get 
        \begin{align*}
            &\int_1^\infty\left(\frac{1}{u-\zeta_{2n}^k}-\frac{1}{u+\zeta_{2n}^k}+\frac{1}{u-\zeta_{2n}^{-k}}-\frac{1}{u+\zeta_{2n}^{-k}}\right)\log u\dif u\\
             &\quad=\sum_{m\ge 0}\left(\zeta_{2n}^{km}-(-1)^m\zeta_{2n}^{km}+\zeta_{2n}^{-km}-(-1)^m\zeta_{2n}^{-km}\right)\int_1^\infty \frac{\log u}{u^{2m+2}}\dif u\\
            &\quad=\Li_2(\zeta_{2n}^k)-\Li_2(-\zeta_{2n}^k)+\Li_2(\zeta_{2n}^{-k})-\Li_2(-\zeta_{2n}^{-k}).
        \end{align*}
        Meanwhile,
        \[
        \begin{aligned}
        &\int_1^{\infty}\Bigl(\frac{1}{(u-\mmi)^2}+\frac{1}{(u+\mmi)^2}\Bigr)\log u\dif u
        =\int_1^\infty \frac{2(u^2-1)}{(u^2+1)^2}\log u\dif u\\
        &\quad=-2\int_1^\infty \frac{\dif}{\dif u}\Bigl(\frac{u}{u^2+1}\Bigr)\log u\dif u=2\int_1^\infty \frac{1}{u^2+1}\dif u=\frac{\pi}{2},
        \end{aligned}
        \]
        where in the third equality we integrated by parts.
        Moreover, we have~\cite[25.12.8]{NIST} 
        \[\Li_2(e^{i\theta})=i\operatorname{Cl}_2(\theta)+\frac{\theta^2-2\pi\theta}{4}+\frac{\pi^2}{6},\]
        where $\operatorname{Cl}_2$ denotes the Clausen function. So
        \[\Li_2(\zeta_{2n}^k)-\Li_2(-\zeta_{2n}^k)+\Li_2(\zeta_{2n}^{-k})-\Li_2(-\zeta_{2n}^{-k})=\frac{\pi^2}{2}-\frac{k\pi^2}{n}.\]
        Combining the computation above, we get
        \[
        m_{\mbh}((x-i)^{*n})\in \bar{\mbq}+\bar{\mbq}\pi.
        \]

        The case of even $n$ is analogous. The only difference is that the function
        \[
        \frac{(u^{2n} + 1)(u^2 - 1)^2}{(u^{2n} - 1)(u^2 + 1)^3}
        \]
        has poles of order $4$ at $u=\pm i$, leading to higher order terms in the partial fraction decomposition, but the calculations remain unchanged.
\end{proof}

In order to generalize the above formula, we first introduce the \emph{$\sigma$-distance} which appears in Gentili--Stoppato~\cite{GSto2}.
\begin{defn}
    Let $p,q\in \mbh$, the $\sigma$-distance between them is given by 
    \[\sigma(q,p)=\begin{dcases}
        |q-p|&\text{if $p,q\in \mbc_I$ for some $I\in \mbs$},\\
        \sqrt{(\RE q-\RE p)^2+(|\!\IM q|+|\!\IM p|)^2}& \text{otherwise}.
    \end{dcases}\]
In particular, if $p\in\C_{I}$, then
\[\sigma(q,p)=\max_{z\in\C_{I}\cap\S_{q}}\left\{\abs{z-p},\,\abs{\overline{z}-p}\right\}.\]
\end{defn}
The following lemma is a strengthened version of the inequality presented in Gentili--Stoppato~\cite[Theorem 6]{GSto2}.
\begin{lem}\label{lem:sigmaqp}
    Let $I,J\in\mbs$, $p\in \mbc_I$ and $n\geq1$. Then for any $q\in \mbc_J$, we have 
    \[\frac{1}{2n}\log\Big(\frac{1}{2}(1-|\langle I,J\rangle|)\Big)\leq \frac{1}{n}\log|(q-p)^{*n}|-\log\sigma(q,p)\leq 0,\]
    where $\langle I,J\rangle$ is the usual Euclidean inner product on $\mathbb{R}^{3}$.
\end{lem}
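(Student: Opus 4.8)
The plan is to reduce the whole statement to one exact identity for $|(q-p)^{*n}|$ coming from the Representation Formula, after which both inequalities are elementary. The one nontrivial input is the vanishing of a cross term.

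First I would observe that, since $p\in\mbc_I$, every coefficient of the slice regular polynomial $(x-p)^{*n}$ lies in $\mbc_I$; hence it is one-slice preserving on $\mbc_I$ and its restriction to $\mbc_I$ is the ordinary complex polynomial $(z-p)^n$. Writing $q=a+bJ$ with $a=\RE q$ and $b=|\!\IM q|\ge 0$ (if the imaginary part of $q$ points along $-J$ I replace $J$ by $-J$, which only flips the sign of $c\coloneqq\langle I,J\rangle$ and leaves $|c|$ unchanged), I apply the Representation Formula (Lemma~\ref{lem:repformula}) to $f=(x-p)^{*n}$ with reference unit $I$, obtaining
\[
(q-p)^{*n}=\frac{1-JI}{2}\,u+\frac{1+JI}{2}\,v,\qquad u=(z_+-p)^n,\ v=(z_--p)^n,
\]
where $z_\pm=a\pm bI\in\mbc_I$, so that $u,v\in\mbc_I$.

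The crux is to compute $|(q-p)^{*n}|^2$ from this. A direct calculation gives $\bigl|\tfrac{1\mp JI}{2}\bigr|^2=\tfrac{1\pm c}{2}$, and the decisive point is that
\[
\overline{\Bigl(\tfrac{1+JI}{2}\Bigr)}\cdot\tfrac{1-JI}{2}=\tfrac14\,(IJ-JI)
\]
is a purely imaginary quaternion orthogonal to the entire slice $\mbc_I=\mathrm{span}_\R\{1,I\}$. Since $u\overline v\in\mbc_I$, cyclicity of the real part yields $\RE\bigl(\tfrac{1-JI}{2}\,u\,\overline v\,\overline{\tfrac{1+JI}{2}}\bigr)=\tfrac14\RE\bigl((IJ-JI)\,u\overline v\bigr)=0$, i.e. the cross term vanishes. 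Consequently
\[
\bigl|(q-p)^{*n}\bigr|^2=\frac{1+c}{2}\,|z_+-p|^{2n}+\frac{1-c}{2}\,|z_--p|^{2n}.
\]
This vanishing (equivalently, the clean two-term formula) is the only delicate step; I expect it to be the main obstacle, and everything after it is routine.

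Finally I would conclude by elementary estimates, using that for $p\in\mbc_I$ one has $\sigma(q,p)=\max\{|z_+-p|,|z_--p|\}$ (the two points of $\mbc_I\cap\S_q$), so $\sigma(q,p)^{2n}=\max\{|z_+-p|^{2n},|z_--p|^{2n}\}$. The displayed identity exhibits $|(q-p)^{*n}|^2$ as a convex combination of $|z_+-p|^{2n}$ and $|z_--p|^{2n}$ with weights $\tfrac{1\pm c}{2}$, hence it is at most $\sigma(q,p)^{2n}$, giving the upper bound (this also follows directly from the factorization $(q-p)^{*n}=\prod_{k}(q_k-p)$ of Lemma~\ref{lem:pointwiseprod} with each $q_k\in\S_q$, so each $|q_k-p|\le\sigma(q,p)$). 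For the lower bound I bound both weights below by $\min\{\tfrac{1+c}{2},\tfrac{1-c}{2}\}=\tfrac{1-|c|}{2}$, obtaining
\[
\bigl|(q-p)^{*n}\bigr|^2\ \ge\ \frac{1-|c|}{2}\bigl(|z_+-p|^{2n}+|z_--p|^{2n}\bigr)\ \ge\ \frac{1-|c|}{2}\,\sigma(q,p)^{2n}.
\]
Taking $\tfrac1{2n}\log$ and subtracting $\log\sigma(q,p)$ then yields the two claimed inequalities. The degenerate case $J=\pm I$, i.e. $|c|=1$, makes the lower bound vacuous and is consistent with $|(q-p)^{*n}|=|q-p|^n=\sigma(q,p)^n$ there.
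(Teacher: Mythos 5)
Your proof is correct and takes essentially the same route as the paper: both apply the Representation Formula to write $(q-p)^{*n}=\frac{1-JI}{2}(z_+-p)^n+\frac{1+JI}{2}(z_--p)^n$, compute the squared modulus (where the cross term vanishes), and conclude by elementary estimates against $\sigma(q,p)=\max\{|z_+-p|,|z_--p|\}$. The only differences are cosmetic: you make the cross-term cancellation explicit and keep the two-term identity symmetric before invoking convexity, whereas the paper first normalizes by the larger factor and states the resulting modulus formula directly (with some typos in its display); note that both arguments in fact establish the inequality with $\log\sigma(q,p)$ in place of $\sigma(q,p)$, which is clearly the intended reading of the lemma.
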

\begin{proof}
    For convenience, we can put $p\in\C$ and $I=i$. Let $J=i\cos\theta +j\sin\theta\cos\theta_{1}+k\sin\theta\sin\theta_{1}$.
    If $q\in\C$, the inequalities become obvious because $|(q-p)^{*n}|^{\frac{1}{n}}=|q-p|=\sigma(q,p)$.
If $q\in\C_{J}$ with $J\neq\pm i$, then by Representation Formula~\ref{lem:repformula} for $z\in\C_{I}\cap\S_{q}$ we have
\[(q-p)^{*n}=\frac{1-JI}{2}(z-p)^n+\frac{1+JI}{2}(\overline{z}-p)^n.\]
If $\abs{z-p}\leq\abs{\overline{z}-p}$ then the distance $\sigma(p,q)=\abs{\overline{z}-p}$. Hence, we obtain
\begin{align*}
  \frac{1}{n}\log|(q-p)^{*n}|&-\log\sigma(q,p)=\frac{1}{n} \log\Big|\frac{1-JI}{2}\Big(\frac{z-p}{\overline{z}-p}\Big)^n+\frac{1+JI}{2}\Big|\\
  &=\frac{1}{2n}\log\Big(\frac{1}{2}(1-\cos\theta)+(1+\cos\theta)\Big|\frac{z-p}{\overline{z}-p}\Big|^{n}\Big).
\end{align*}
One finds thusly
\[\frac{1}{2n}\log\Big(\frac{1}{2}(1-\cos\theta)\Big)\leq\frac{1}{n}\log|(q-p)^{*n}|-\log\sigma(q,p)\leq 0.\]
The case $\abs{z-p}>\abs{\overline{z}-p}$ can be treated in the same manner.
\end{proof}

\begin{thm}\label{thm:iterateda}
    Let $\alpha\in \mbh$ with $\alpha\ne0$. Let $I$ be such that $\alpha=\RE \alpha-I\abs{\!\IM \alpha}$, and put $\phi=-\arccos \frac{\RE \alpha}{|\alpha|}\in[-\pi,0]$. Then
    \[
    \lim_{n\to +\infty}\frac{m_{\mbh}((x-\alpha)^{*n})}{n}=\mathcal{J}(\alpha),
    \]
    where $\mathcal{J}$ is the function in Lemma~\ref{lem:jenformula}.
\end{thm}

\begin{proof}[Proof of Theorem~\ref{thm:iterateda}]
    Let $P(x)=x-\alpha$. By similarity, we may assume that $\alpha\in \mbc$ and $I=i$. By Lemma \ref{lem:sigmaqp}, we have
    \begin{align*}
        \biggl|\frac{m_{\mbh}(P^{*n})}{n}&-\int_{\mbt^1(\mbh)}\log \sigma(q,\alpha)\dif \mu(q)\biggr| \\
        &\leq\frac{1}{\pi n}\left|\int_0^{\pi}\int_{\mbs}\log\Big(\frac{1-|\langle i,J\rangle|}{2}\Big)\sin^2\theta\dif J\dif \theta\right|.
    \end{align*}
    Here we have rewritten the Haar measure on $\mbt^1(\mbh)\cong S^3$ in polar coordinates
    \[
    q=\cos\theta+\sin\theta J,\qquad \theta\in[0,\pi],\ J\in\mbs,
    \]
    so that
    \[
    \dif\mu(q)=\frac{2}{\pi}\sin^2\theta\dif J\dif\theta,
    \]
where $\dif J$ denotes the Haar measure of $\mbs$. The singularity set $\{J\in\mbs\mid|\langle i,J\rangle|=1\}=\{\pm i\}$ is finite and the integrand has at most a logarithmic singularity there, so the integral on the right-hand side is finite.
    
    This implies that 
    \[\lim_{n\to \infty}\frac{m_{\mbh}(P^{*n})}{n}=\int_{\mbt^1(\mbh)}\log \sigma(q,\alpha)\dif \mu(q).\]
    For any $q\in \mbh\setminus \mbc$ and $p\in \mbh$, we have $\sigma(q,\alpha)=\sigma(p\inv qp,\alpha)$. Therefore, using the same method as in Proposition~\ref{prop:pxtheta}, it can be deduced that
    \begin{align*}
        \int_{\mbt^1(\mbh)}\log \sigma(q,\alpha)\dif \mu(q)=&\frac{1}{\pi}\int_0^{\pi}\log\bigl((\cos\theta-\RE \alpha)^2+(\sin\theta+|\!\IM \alpha|)^2\bigr)\sin^2\theta\dif \theta\\
        =&\frac{1}{\pi}\int_0^{\pi}\log(1+|\alpha|^2-2|\alpha|\cos(\theta-\phi))\sin^2\theta\dif \theta.
    \end{align*}
    The formula follows immediately from Lemma \ref{lem:jenformula}.
\end{proof}

The method herein can be applied to arbitrary one-slice preserving polynomial.

\begin{thm}
    Let $P(x)$ be a slice regular polynomial which preserves $\C_{I}$. Then as $n\to+\infty$ one has
    \[\lim_{n\to+\infty}\frac{1}{n}\mH(P^{*n})=\frac{2}{\pi}\int_{0}^{\pi}\log\max\{\abs{P(e^{I\theta})},\abs{P(e^{-I\theta})}\}\sin^2\theta\dif\theta,\]
and as $n\to-\infty$ one has,
\[\lim_{n\to-\infty}\frac{1}{n}\mH(P^{*n})=\frac{2}{\pi}\int_{0}^{\pi}\log\min\{\abs{P(e^{I\theta})},\abs{P(e^{-I\theta})}\}\sin^2\theta\dif\theta.\]
\end{thm}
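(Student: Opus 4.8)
The plan is to mimic the structure of the proof of Theorem~\ref{thm:iterateda} verbatim, replacing the single monomial $x-\alpha$ by the general one-slice preserving polynomial $P$, and replacing the pointwise $\sigma$-distance to a single point by a quantity built from $P$ evaluated at the two complex conjugate points $e^{I\theta}$ and $e^{-I\theta}$ lying on the slice $\C_I$. The key observation driving everything is that, by the Representation Formula~\ref{lem:repformula}, for $q\in\C_J$ with $z\in\C_I\cap\S_q$ one has a two-term decomposition of $P^{*n}(q)$ in terms of $P(z)^n$ and $P(\overline z)^n$ — but here I must be careful: it is $P^{*n}$, not $(\cdot)^n$ of a linear factor. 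The correct starting point is that for a one-slice preserving $P$, its restriction $P_I$ is an honest complex polynomial on $\C_I$, so $(P^{*n})_I=(P_I)^n$ on $\C_I$, and the Representation Formula then gives
$$P^{*n}(q)=\tfrac{1-JI}{2}\,P(z)^n+\tfrac{1+JI}{2}\,P(\overline z)^n$$
for $q\in\C_J$ and $z\in\C_I\cap\S_q$.

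First I would establish the analogue of Lemma~\ref{lem:sigmaqp}: taking the modulus of the two-term expression above and factoring out the larger of $|P(z)|,|P(\overline z)|$, the same computation as in Lemma~\ref{lem:sigmaqp} yields, for the $n\to+\infty$ case, the two-sided bound
$$\frac{1}{2n}\log\Big(\tfrac{1}{2}(1-|\langle I,J\rangle|)\Big)\leq \frac{1}{n}\log|P^{*n}(q)|-\log\max\{|P(z)|,|P(\overline z)|\}\leq 0,$$
valid whenever $\max\{|P(z)|,|P(\overline z)|\}\neq 0$. Integrating over $\mbt^1(\mbh)$ and noting that the lower-bound term $\tfrac{1}{2n}\int_0^\pi\int_{\mbs}\log(\tfrac12(1-|\langle I,J\rangle|))\,\dif J\,\dif\theta$ is a fixed constant divided by $n$, hence $O(1/n)$, we conclude
$$\lim_{n\to+\infty}\frac{1}{n}\mH(P^{*n})=\int_{\mbt^1(\mbh)}\log\max\{|P(z)|,|P(\overline z)|\}\,\dif\mu(q),$$
where $z\in\C_I\cap\S_q$. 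Since the integrand depends only on the similarity class of $q$ (i.e.\ it is ``slice preserving'' in the terminology used in the proof of Theorem~\ref{thm:iterateda}, as $\max\{|P(z)|,|P(\overline z)|\}$ is unchanged under $q\mapsto p^{-1}qp$), the same spherical-coordinate reduction employed in Proposition~\ref{prop:pxtheta} collapses the $\S^3$-integral to the claimed one-dimensional integral $\tfrac{2}{\pi}\int_0^\pi\log\max\{|P(e^{I\theta})|,|P(e^{-I\theta})|\}\sin^2\theta\,\dif\theta$. The $n\to-\infty$ case follows by an identical argument with the roles of the two exponentials exchanged: writing $P^{*n}=(P^{*-|n|})$ and factoring out the \emph{smaller} of the two moduli in the Representation-Formula expression flips the max to a min in the limit.

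The main obstacle is the convergence of the integrals near the zeros of $P$ on the slice, where $\max\{|P(z)|,|P(\overline z)|\}$ (or its min) may vanish and the logarithm blows up. For the max-integral this is harmless, since $\max$ vanishes only where \emph{both} $|P(e^{I\theta})|$ and $|P(e^{-I\theta})|$ vanish, a measure-zero and integrable-singularity situation controlled by Lemma~\ref{lem:measurebound} applied to $P^s$; but for the min-integral the vanishing locus is larger (any zero of $P$ on the unit circle of the slice), so I must justify both that $\log\min\{\cdots\}$ is integrable and, more delicately, that the uniform bound from the Representation Formula still licenses passing the limit inside the integral there. The cleanest way to handle this is to invoke the existence result of Theorem~\ref{thm:existence} together with the measure estimate of Lemma~\ref{lem:measurebound} to dominate the tail contributions uniformly in $n$, so that dominated convergence applies; establishing this uniform domination near the common zeros is the step requiring the most care.
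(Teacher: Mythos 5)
Your treatment of the $n\to+\infty$ half coincides with the paper's, which simply declares this case to be "exactly the same as the proof of Theorem~\ref{thm:iterateda}": the identity $(P^{*n})_I=(P_I)^n$ for one-slice preserving $P$, the Representation Formula decomposition, the Lemma~\ref{lem:sigmaqp}-type sandwich with $\log\max\{\abs{P(z)},\abs{P(\overline z)}\}$ in place of $\log\sigma(q,p)$, and the Proposition~\ref{prop:pxtheta}-style collapse of the $\S^3$-integral to a one-dimensional one. Where you genuinely diverge is the $n\to-\infty$ half. The paper never analyses the rational function $P^{*-m}$ pointwise: following Proposition~\ref{prop:reciprocali} it writes $\mH(P^{*-n})=\mH((P^{c})^{*n})-n\,\mH(P^{s})$, notes that $P^{c}$ and $P^{s}$ also preserve $\C_{I}$ with $P^{c}(e^{I\theta})=\overline{P(e^{-I\theta})}$ and $P^{s}(e^{I\theta})=P(e^{I\theta})\overline{P(e^{-I\theta})}$, applies the already-proved positive case to $P^{c}$ (whose max-integrand equals that of $P$), and obtains the minimum from $\log a+\log b-\log\max\{a,b\}=\log\min\{a,b\}$. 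Your route instead applies the Representation Formula directly to $P^{*-m}$ and factors out the term of largest modulus, i.e.\ $\min\{\abs{P(z)},\abs{P(\overline z)}\}^{-m}$; this is legitimate, since $(P^{*-m})_I=(P_I)^{-m}$ away from the zero spheres of $P^{s}$, which are $\mu_{\mbh}$-null. The paper's reduction buys exactly the avoidance of the analytic worries you flag at the end; your route buys a self-contained pointwise estimate that never mentions regular reciprocals beyond the restriction identity.

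Concerning the step you single out as "requiring the most care": it is not a dominated-convergence problem, and no uniform-in-$n$ domination near the zeros is needed. On the negative side your own sandwich gives, off the zero spheres of $P^{s}$,
$$0\le \frac{1}{n}\log\abs{P^{*n}(q)}-\log\min\{\abs{P(z)},\abs{P(\overline z)}\}\le -\frac{1}{2\abs{n}}\log\Big(\frac{1}{2}\big(1-\abs{\langle I,J\rangle}\big)\Big),$$
so the error is an $n$-independent integrable function divided by $\abs{n}$, exactly as in your positive case. Integrating then settles the limit once you record two facts: (i) the target integral is finite, since $\log\min\{\abs{P(e^{I\theta})},\abs{P(e^{-I\theta})}\}=\log\abs{P(e^{I\theta})}+\log\abs{P(e^{-I\theta})}-\log\max\{\abs{P(e^{I\theta})},\abs{P(e^{-I\theta})}\}$ and each summand has only finitely many logarithmic singularities on $[0,\pi]$; and (ii) $\mH(P^{*-m})$ exists, which follows from the factorization $P^{*-m}=(P^{s})^{-m}(P^{c})^{*m}$ together with Theorem~\ref{thm:pq} and Theorem~\ref{thm:existence}. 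Either add these two remarks to complete your direct argument, or switch to the paper's algebraic reduction for $n<0$, which renders them unnecessary.
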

\begin{proof}
    If $n\geq 0$, the proof is exactly the same as the proof of Theorem~\ref{thm:iterateda}. If $n<0$, like Proposition~\ref{prop:reciprocali}, we get
 \[\mH(P^{*-n})=\mH((P^{c})^{*n})-n\mH(P^{s}).\]
 The conjugate $P^{c}$ and symmetrization $P^{s}$ both preserve $\C_{I}$, and
 \[P^{c}(e^{I\theta})=\overline{P(e^{-I\theta})},\quad P^{s}(e^{I\theta})=P(e^{I\theta})\overline{P(e^{-I\theta})}.\]
These combine to give us the formula when $n<0$.
\end{proof}

Once more, we offer the following Mahler measures for one-slice preserving polynomial.

\begin{prop}
    We have
    \[\mH((x-i)\ast(x-2i))=\frac{9}{16}+\log 2.\]
\end{prop}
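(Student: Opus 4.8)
The plan is to exploit that $P(x)=(x-i)\ast(x-2i)$ is one-slice preserving on $\C_i$: expanding the $\ast$-product gives the coefficients $-2,-3i,1$, which all lie in $\C_i$, and on $\C_i$ the $\ast$-product degenerates to the ordinary commutative product, so $P|_{\C_i}(z)=(z-i)(z-2i)$. Thus Proposition~\ref{prop:pxtheta}(2) applies with $I=i$ and any $J\perp i$. First I would use the Representation Formula~\ref{lem:repformula} to express the modulus on the whole similarity sphere in terms of the two complex values $A(\theta)=\abs{P(e^{i\theta})}^2$ and $B(\theta)=\abs{P(e^{-i\theta})}^2$: writing $\theta_1$ for the angle between the slice of the evaluation point and $i$, a short computation (splitting into the $\C_i$-part and its orthocomplement $\C_i j$, which are orthogonal) gives the clean identity
\[\abs{P(q)}^2=\tfrac{1+\cos\theta_1}{2}A(\theta)+\tfrac{1-\cos\theta_1}{2}B(\theta).\]
An explicit evaluation on the unit circle gives $A(\theta)=(2-2\sin\theta)(5-4\sin\theta)$ and $B(\theta)=(2+2\sin\theta)(5+4\sin\theta)$.

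Next I would carry out the inner integral over the slice angle $\theta_1$. Since $\tfrac{1+\cos\theta_1}{2}A+\tfrac{1-\cos\theta_1}{2}B=\tfrac{A+B}{2}-\tfrac{B-A}{2}\cos\theta_1$ has the shape $a-b\cos\theta_1$ with $a=\tfrac{A+B}2>b=\tfrac{B-A}2>0$ on $(0,\pi)$, formula~\eqref{eq:logabtheta} evaluates $\int_0^\pi\log\abs{P(q)}\sin\theta_1\,\dif\theta_1$ in closed form, yielding
\[m_{\mbh}(P)=\frac1\pi\int_0^\pi\left(\frac{A\log A-B\log B}{A-B}-1\right)\sin^2\theta\,\dif\theta=\frac1\pi\int_0^\pi\frac{A\log A-B\log B}{A-B}\sin^2\theta\,\dif\theta-\frac12.\]
This reduces everything to a single one-dimensional integral.

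To evaluate it I would split $\frac{A\log A-B\log B}{A-B}=\frac12\log(AB)+\frac{A+B}{2(A-B)}\log\frac AB$ into a symmetric and an antisymmetric piece. For the symmetric piece, the factorization $AB=\abs{(e^{2i\theta}+1)(e^{2i\theta}+4)}^2$, equivalently $\tfrac12\log(AB)=\log\abs{e^{2i\theta}+1}+\log\abs{e^{2i\theta}+4}$, turns $\frac1\pi\int_0^\pi\tfrac12\log(AB)\sin^2\theta\,\dif\theta$ into two standard circle integrals (via $\phi=2\theta$ and Fourier expansion, or classical Jensen), contributing $-\tfrac14$ and $\log2-\tfrac1{16}$, hence $\log2-\tfrac5{16}$ in total. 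The antisymmetric piece simplifies, using $\tfrac{A+B}{2(A-B)}\sin^2\theta=-\tfrac{(5+4\sin^2\theta)\sin\theta}{18}$ and $\log\tfrac AB=-2\arctanh(\sin\theta)-2\arctanh(\tfrac45\sin\theta)$, to
\[\frac1{9\pi}\int_0^\pi(5\sin\theta+4\sin^3\theta)\big(\arctanh(\sin\theta)+\arctanh(\tfrac45\sin\theta)\big)\,\dif\theta.\]

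The main obstacle is this last integral. I would handle it by integration by parts in $\theta$, differentiating the $\arctanh$ factors and integrating the polynomials in $\sin\theta$; this kills the boundary terms (the logarithmic singularities at $\theta=\tfrac\pi2$ are annihilated by the $\cos\theta$ factors) and leaves elementary integrals, the $\arctanh(\tfrac45\sin\theta)$ terms reducing to the standard evaluation $\int_0^{\pi/2}\frac{\dif\theta}{25-16\sin^2\theta}=\frac\pi{30}$ after a partial-fraction step. This produces $\int_0^\pi(5\sin\theta+4\sin^3\theta)\arctanh(\sin\theta)\,\dif\theta=\tfrac{25\pi}3$ and $\int_0^\pi(5\sin\theta+4\sin^3\theta)\arctanh(\tfrac45\sin\theta)\,\dif\theta=\tfrac{97\pi}{24}$, so the antisymmetric piece equals $\tfrac{11}8$. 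Adding the three contributions gives $m_{\mbh}(P)=\big(\log2-\tfrac5{16}\big)+\tfrac{11}8-\tfrac12=\tfrac9{16}+\log2$, as claimed. The delicate points are verifying $a>b>0$ so that~\eqref{eq:logabtheta} applies (the endpoints $\theta\in\{0,\pi\}$ have measure zero) and confirming that all transcendental contributions from the antisymmetric piece cancel so that it is genuinely rational.
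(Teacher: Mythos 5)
Your proof is correct, and all the numerical claims check out: the Representation Formula identity $\abs{P(q)}^2=\tfrac{1+\cos\theta_1}{2}A(\theta)+\tfrac{1-\cos\theta_1}{2}B(\theta)$ is valid (the $\C_i$-component and the $\C_i j$-component are indeed orthogonal, and the cross terms cancel because $\abs{w}^2=uv$), the values $A=(2-2\sin\theta)(5-4\sin\theta)$, $B=(2+2\sin\theta)(5+4\sin\theta)$ are right, and I verified the symmetric contributions $-\tfrac14$ and $\log 2-\tfrac1{16}$, the antisymmetric integrals $\tfrac{25\pi}3$ and $\tfrac{97\pi}{24}$, and the final assembly $\big(\log 2-\tfrac5{16}\big)+\tfrac{11}8-\tfrac12=\tfrac9{16}+\log 2$. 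Your route coincides with the paper's in its first half: both invoke Proposition~\ref{prop:pxtheta}(2) for one-slice preserving polynomials and both perform the inner slice-angle integral with formula~\eqref{eq:logabtheta} (the paper writes the integrand directly as $2(5+4\sin^2\theta-9\sin\theta\cos\theta_1)$, which is exactly your $\tfrac{A+B}2-\tfrac{B-A}2\cos\theta_1$). The endgames differ. The paper takes the resulting function $H(r)$, $r=\sin\theta$, exploits $H(r)=H(-r)$ to rewrite the outer integral as $\frac1{4\pi}\int_0^{2\pi}H(\cos\theta)\cos^2\theta\,\dif\theta$, and evaluates the individual logarithmic pieces by Cauchy's theorem/residues. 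You instead package the inner integral into the Sylvester-type expression $\frac{A\log A-B\log B}{A-B}-1$ (structurally the same formula as Theorem~\ref{thm:ab}, a nice observation), split it into a symmetric piece $\tfrac12\log(AB)$ handled by classical Jensen/Fourier circle integrals and an antisymmetric piece involving $\arctanh$, killed by integration by parts and the elementary integral $\int_0^{\pi/2}\frac{\dif\theta}{25-16\sin^2\theta}=\frac\pi{30}$. Your version avoids contour integration entirely, at the cost of slightly more delicate bookkeeping; the paper's residue scheme is more mechanical and generalizes more readily to other one-slice preserving products such as $(x-i)\ast(x-2i)$ with different parameters. One small point: the hypothesis $a>b>0$ for~\eqref{eq:logabtheta} fails not only at $\theta\in\{0,\pi\}$ (where $b=0$) but also at $\theta=\tfrac\pi2$ (where $A=0$, so $a=b$); this is harmless since the inner integral extends continuously there, but it should be mentioned alongside the endpoints.
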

\begin{proof}
    By Proposition~\ref{prop:pxtheta}, we find that
\begin{align*}
   \mH((x&-i)\ast(x-2i))\\
    &=\frac{1}{2\pi}\int_{0}^{\pi}\int_{0}^{\pi}\log \left[2(5+4\sin^2\theta-9\sin\theta\cos\theta_1)\right]\sin^2\theta\sin\theta_{1}\dif\theta\dif\theta_{1}.
\end{align*}
Using~\eqref{eq:logabtheta} we get
\begin{align*}
H&(r)\coloneqq\int_{0}^{\pi}\log \big(5+4r^2-9r\cos\theta_1\big)\sin\theta_{1}\dif\theta_1\\
=\frac{1}{9r}&(4r^2+9r+5)\log (r+1)(4r+5)-\frac{1}{9r}(4r^2-9r+5)\log (r-1)(4r-5)-2,
\end{align*}
where $r=\sin\theta$. For the next step, we mention that $H(r)=H(-r)$. This implies that
\begin{align*}
  \mH&((x-i)\ast(x-2i))=\frac{1}{4\pi}\int_{0}^{2\pi}H(\cos\theta)\cos^2\theta\dif\theta+\frac{1}{2}\log 2.
\end{align*}
The integral of $H$ is in fact $\frac{9}{16}+\frac{1}{2}\log 2$, which can be computed by
\begin{align*}
\frac{1}{4\pi}&\int_{0}^{2\pi}H(\cos\theta)\cos^2\theta\dif\theta\\
  &=\frac{1}{18}(4A^{+}(3)+9A^{+}(2)+5A^{+}(1)+4B^{+}(3)+9B^{+}(2)+5B^{+}(1))-\frac{1}{2}\\
  &-\frac{1}{18}(4A^{-}(3)-9A^{-}(2)+5A^{-}(1)+4B^{-}(3)-9B^{-}(2)+5B^{-}(1)),
\end{align*}
where $A^{\pm}(n)$ and $B^{\pm}(n)$ are given by the following formulas
\begin{align*}
   A^{\pm}(n)\coloneqq\frac{1}{2\pi}\int_{0}^{2\pi}\log(1\pm\cos\theta)\cos^n\theta\dif\theta=
    \begin{cases}
      \pm\frac{5}{6}\phantom{-\frac{1}{2}\log 2}&n=3,\\
      -\frac{1}{4}-\frac{1}{2}\log 2&n=2,\\
      \pm 1\phantom{-\frac{1}{2}\log 2}&n=1,
    \end{cases}
\end{align*}
and
\begin{align*}
    B^{\pm}(n)\coloneqq\frac{1}{2\pi}\int_{0}^{2\pi}\log(5\pm4\cos\theta)\cos^n\theta\dif\theta=
    \begin{cases}
      \pm\frac{37}{96}\phantom{+\log 2}&n=3,\\
      -\frac{1}{16}+\log 2&n=2,\\
      \pm \frac{1}{2}\phantom{+\log 2}&n=1.
    \end{cases}
\end{align*}
One can use Cauchy's theorem to deduce these formulas, for example,
\begin{align*}
    \frac{1}{2\pi}\int_{0}^{2\pi}\log(5+4\cos\theta)\cos^3\theta&\dif\theta=\frac{1}{4}\IM\oint_{|z|=1}\log(z+2)\Big(z+\frac{1}{z}\Big)^3\,\frac{\dif z}{z}\\
    =&\frac{\pi i}{2}\Res_{z=0}\frac{\log(z+2)}{z}\Big(z+\frac{1}{z}\Big)^3=\frac{37}{96}.\qedhere
\end{align*}
\end{proof}

Finally, we provide an example of the Mahler measure of a slice regular polynomial that does not preserve any slice.

\begin{prop}
    We have
    \[\mH((x-i)*(x-j))=\frac{\pi}{2}-1.\]
\end{prop}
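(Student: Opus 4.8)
The plan is to compute the $\ast$-product explicitly, reduce $\abs{P(q)}$ to a strikingly simple function on $\S^3$, and then carry out the resulting integral; existence of the integral is already guaranteed by Theorem~\ref{thm:existence}.

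First I would expand the $\ast$-product from the definition. Writing $x-i=x\cdot 1+(-i)$ and $x-j=x\cdot 1+(-j)$ (coefficients on the right), one finds
\[
P(x)=(x-i)\ast(x-j)=x^2-x(i+j)+k,
\]
where the coefficient order is essential, since $q(i+j)\neq(i+j)q$ in general. To evaluate $\abs{P(q)}$ on $\mbt^1(\mbh)$ I would invoke Lemma~\ref{lem:pointwiseprod}: as $x-i$ has the single root $i$, for $q\neq i$ we have $P(q)=(q-i)\bigl(q'-j\bigr)$ with $q'=(q-i)^{-1}q(q-i)$, which is similar to $q$. Writing $q=t+xi+yj+zk\in\S^3$, both $\abs{q-i}^2=2(1-x)$ and $\abs{q'-j}^2=2\bigl(1-\langle\IM q',j\rangle\bigr)$ are governed by $\RE q$ and $\abs{\IM q}$ (which conjugation preserves), and computing the conjugating rotation shows the factor $1-x$ cancels, leaving the clean identity $\abs{P(q)}^2=4(1-x+tz)$. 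The same identity follows by directly expanding $P(q)$ into its four real components and summing their squares with $t^2+x^2+y^2+z^2=1$; this elementary route is the one I would actually write out, to sidestep the fiddly Rodrigues computation.

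With this identity, $\mH(P)=\log 2+\tfrac12\int_{\S^3}\log(1-x+tz)\,\dif\mu$. I would compute the integral via the hyperspherical decomposition \eqref{eq:hyperspherical}, writing $q=\cos\theta+\sin\theta\,I$ with $I=I_1i+I_2j+I_3k$ uniform on $\S$ and $\theta$-weight $\tfrac2\pi\sin^2\theta$. For fixed $\theta$ the integrand is $\log\bigl(1+\langle\vec a,I\rangle\bigr)$ with $\vec a=(-\sin\theta,0,\sin\theta\cos\theta)$ and $\abs{\vec a}=\rho:=\sqrt{1-\cos^4\theta}$; since $\langle\vec a,I\rangle$ is uniform on $[-\rho,\rho]$, the $\S$-average collapses to
\[
\tfrac12\!\int_{-1}^{1}\!\log(1+\rho u)\,\dif u=\tfrac{1}{2\rho}\bigl[(1+\rho)\log(1+\rho)-(1-\rho)\log(1-\rho)\bigr]-1=\tfrac12\log(1-\rho^2)+\tfrac{\arctanh\rho}{\rho}-1 .
\]

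Everything then reduces to two $\theta$-integrals. Using $1-\rho^2=\cos^4\theta$, the first is standard: $\int_0^{\pi}\log(1-\rho^2)\sin^2\theta\,\dif\theta=4\int_0^{\pi}\log\abs{\cos\theta}\sin^2\theta\,\dif\theta=-\pi-2\pi\log 2$. The second, $\int_0^{\pi}\tfrac{\arctanh\rho}{\rho}\sin^2\theta\,\dif\theta$, is the crux and the main obstacle. I would evaluate it by a Feynman-parameter trick: set $\Phi(a)=\int_0^{\pi}\tfrac1\rho\arctanh(a\rho)\sin^2\theta\,\dif\theta$, so that $\Phi'(a)=\int_0^{\pi}\frac{\sin^2\theta}{(1-a^2)+a^2\cos^4\theta}\,\dif\theta$ becomes a rational integral under $\tan\theta\mapsto t$ and is evaluated by residues; integrating $\Phi'(a)$ over $a\in[0,1]$ and using the substitutions $a=\sin\delta$, $\phi=\delta/2$, $s=\sin\phi$, $s=\tfrac1{\sqrt2}\sin\psi$ collapses it to $\tfrac1{\sqrt2}\int_0^{\pi/2}\tfrac{\dif\psi}{1-\frac12\sin^2\psi}=\tfrac\pi2$, whence $\Phi(1)=\pi^2/2$. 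Assembling the two pieces gives $\int_{\S^3}\log(1-x+tz)\,\dif\mu=\pi-2-2\log 2$, and therefore
\[
\mH\bigl((x-i)\ast(x-j)\bigr)=\log 2+\tfrac12\bigl(\pi-2-2\log 2\bigr)=\tfrac\pi2-1 .
\]
Beyond the $\arctanh$-integral, the other step that must be gotten exactly right is the cancellation producing $\abs{P(q)}^2=4(1-x+tz)$, as it is precisely what renders the final integral tractable.
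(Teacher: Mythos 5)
Your proof is correct, and it reaches the result by a genuinely different route than the paper's. Both arguments begin the same way: expanding $(x-i)\ast(x-j)=x^2-x(i+j)+k$ and computing the squared modulus on the unit sphere — your identity $\abs{P(q)}^2=4(1-x+tz)$ for $q=t+xi+yj+zk$ is exactly the paper's $A=4(1-\cos\theta)\bigl(1+\cos^2\tfrac{\theta}{2}\sin^2\theta_1\sin 2\theta_2\bigr)$ written in its (permuted) hyperspherical coordinates, and I verified it by direct expansion. After that the methods diverge. The paper splits $\log A=\log A_1+\log A_2$, evaluates the $A_1$-piece by its Jensen formula, and expands $\log A_2$ as a power series, integrating term by term against Beta functions and evaluating the resulting sum $\sum_m \left(\tfrac12\right)_m\big/\bigl(m(m+\tfrac12)(m+1)!\bigr)$ via Catalan-number generating functions. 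You instead integrate over the sphere first: Archimedes' theorem (the projection of a uniform point of $\S$ onto any axis is uniformly distributed) collapses the $\S$-average to $\tfrac12\log(1-\rho^2)+\arctanh(\rho)/\rho-1$ with $\rho=\sqrt{1-\cos^4\theta}$, and the surviving $\arctanh$-integral is handled by differentiation in the parameter $a$, residues, and elementary substitutions. I checked the two computations that carry your argument: the residue evaluation gives $\Phi'(a)=\pi\sin(\delta/2)/(\sin\delta\cos^{3/2}\delta)$ under $a=\sin\delta$, and your substitution chain then yields $\Phi(1)=\pi\cdot\tfrac{1}{\sqrt{2}}\int_0^{\pi/2}\bigl(1-\tfrac12\sin^2\psi\bigr)^{-1}\dif\psi=\tfrac{\pi^2}{2}$, so indeed $\mH(P)=\Phi(1)/\pi-1=\tfrac{\pi}{2}-1$; your first $\theta$-integral $-\pi-2\pi\log 2$ is also right. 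What your route buys is the complete avoidance of infinite series (no term-by-term integration to justify and no hypergeometric sums to close); what the paper's route buys is a more mechanical scheme that would still apply to integrands admitting no Archimedes-type collapse of the angular average. As a side benefit, your computation independently confirms the stated value, which is reassuring given that the paper's own evaluation contains a harmless slip: its constant $S_2$ equals $\tfrac12\log 2-\tfrac14$ rather than $\tfrac12\log 2-\tfrac12$, although its final value of $S$ and the proposition itself are correct.
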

\begin{proof}
Let us write $x$ in the hyperspherical form
\[x=\sin \theta\sin\theta_1\cos \theta_2  +k\sin \theta\sin\theta_1\sin\theta_2 +i\cos\theta+j\sin\theta\cos \theta_1 .\]
In this representation, the Mahler measure takes the form of an integral
\begin{align*}
 \mH((x-i)&*(x-j))\\
 &=\frac{1}{4\pi^2}\int_{0}^{2\pi}\int_{0}^{\pi}\int_0^{\pi}\log A(\theta,\theta_1,\theta_2)\sin^2\theta\sin\theta_1\dif \theta\dif \theta_1\dif \theta_2,
\end{align*}
where
\[A(\theta,\theta_1,\theta_2)=A_{1}\cdot A_{2}=4(1-\cos\theta)\cdot\Big(1+\cos^2\frac{\theta}{2}\sin^2\theta_{1}\sin 2\theta_{2}\Big).\]

To compute it, we need to evaluate two separate integrals. The first integral, involving $A_1$,  can be computed using Lemma~\ref{lem:jenformula}
\begin{align*}
    \int_{S^{3}}&\log A_1\dif\mu(S^{3})=\frac{2}{\pi}\int_{0}^{\pi}\log(4-4\cos\theta)\sin^2\theta\dif\theta=\frac{1}{2}+\log 2.
\end{align*}

The second integral, involving $A_2$, requires expanding the $\log$ as
\[\log A_2=\sum_{n=1}^{\infty}(-1)^{n-1}\frac{1}{n}\cos^{2n}\frac{\theta}{2}\sin^{2n}\theta_{1}\sin^{n} 2\theta_{2}.\]
It is not hard to find
\[\int_{0}^{\pi}\cos^{2n}\frac{\theta}{2}\sin^2\theta\dif\theta=2\sqrt{\pi}\,\frac{\Gamma(n+\frac{3}{2})}{\Gamma(n+3)},\]
\[\int_{0}^{\pi}\sin^{2n+1}\theta_1\dif\theta_1=\sqrt{\pi}\,\frac{\Gamma(n+1)}{\Gamma(n+\frac{3}{2})},\]
and
\[\int_{0}^{2\pi}\sin^{n}2\theta_2\dif\theta_2=2\sqrt{\pi}\frac{\Gamma(\frac{n+1}{2})}{\Gamma(\frac{n}{2}+1)}\quad\text{if $n$ is even, otherwise } 0.\]
Integrating term by term, we obtain
\[
    \int_{S^{3}}\log A_2\dif\mu(S^{3})=-\frac{1}{4}\sum_{m=1}^{\infty}\frac{\left(\frac{1}{2}\right)_m}{m\!\left(m+\frac{1}{2}\right)\!(m+1)!}.
\]

Finally, we simplify our task to proving a specific identity
\[S\coloneqq\frac{1}{4}\sum_{m=1}^{\infty}\frac{\left(\frac{1}{2}\right)_m}{m\!\left(m+\frac{1}{2}\right)\!(m+1)!}=\frac{5}{2}+\log 2-\pi.\]
Observe first that the expression
\[\frac{1}{4}\sum_{m=1}^{\infty}\frac{\left(\frac{1}{2}\right)_m}{\!\left(m+\frac{1}{2}\right)\!(m+1)!}x^m=\sqrt{x}\arcsin \sqrt{x}+(1-x)^{\frac{1}{2}}-1,\]
implies that
\[S_{1}\coloneqq\frac{1}{4}\sum_{m=1}^{\infty}\frac{\left(\frac{1}{2}\right)_m}{\!\left(m+\frac{1}{2}\right)\!(m+1)!}=\frac{\pi}{2}-\frac{3}{2}.\]
Meanwhile, we find that
\begin{align*}
  S_{2}\coloneqq\frac{1}{4}\sum_{m=1}^{\infty}\frac{\left(\frac{1}{2}\right)_m}{m(m+1)!}&=\int_{0}^{1}\left(\frac{1-\sqrt{1-x}}{2x^2}-\frac{1}{4x}\right)\dif x=\frac{1}{2}\log 2-\frac{1}{4}.
\end{align*}
Consequently, the sum $S$ is
\[S=2(S_{2}-S_{1})=\frac{5}{2}+\log 2-\pi,\]
thus establishing the desired identity.
\end{proof}
\section{Examples of non-commutative linear polynomials}\label{sec6}

In this section, we give some examples of Mahler measures of non-commutative linear polynomials. 

\begin{defn}
    Let $L:\mbh\to\mbh$ be an operator on quaternions. The operator is said to be \emph{real-linear} if for any $x,y\in\mbh$,
\[L(x+y)=L(x)+L(y),\]
\[L(qx)=qL(x),\quad\text{for }q\in\R.\]
\end{defn}
A well-known fact is that every real-linear operator can be depicted as a non-commutative linear polynomial
\[L(x)=\sum_{i=1}^{n} p_{i}xq_{i},\quad\text{where }p_{i},q_{i}\in\mbh\text{ are non-zero}.\]
For example, the quaternionic conjugation can be written as
\[\overline{q}=-\frac{1}{2}(q+iqi+jqj+kqk).\]
Let $x=x_1+x_2i+x_3j+x_4k$ and $v=(x_1,x_2,x_3,x_4)^T$, then there exists a real symmetric $(4\times4)$-matrix $P$ such that
\begin{equation}\label{eq:vpv}
    |L(x)|^2=v^TPv.
\end{equation}

Let $p_1,p_2,q_1,q_2\in \mbh$ be non-zero quaternions. Within this section, we examine a particular type of linear polynomials that can be written as a sum of two monomials
\[L(x)=p_1xq_1+p_2xq_2.\] 
Upon direct simplification, we find
\[\mH(L)=\log \abs{p_2}+\log \abs{q_1}+\mH(p_{2}^{-1}p_{1}x+xq_{2}q_{1}^{-1}).\]
Thus, our attention is directed towards those kinds of polynomials under the Sylvester form $L(x)=ax+xb$.

Before the proof of Theorem~\ref{thm:ab}, we prepare a simple lemma.
\begin{lem}\label{lem:eigenofx}
    Let $a,b\in \mbh$ be non-zero and let $x=x_1+x_2i+x_3j+x_4k$. Then the four eigenvalues of the matrix $P$ in Eq. \eqref{eq:vpv} are 
    \begin{align*}
        \lambda_1^{+}&=\lambda_2^{+}=(\RE a+\RE b)^2+(|\!\IM a|+|\!\IM b|)^2,\\  \lambda_1^{-}&=\lambda_2^{-}=(\RE a+\RE b)^2+(|\!\IM a|-|\!\IM b|)^2.
    \end{align*}
\end{lem}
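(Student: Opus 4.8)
The plan is to write the real-linear operator $L(x)=ax+xb$ explicitly as a $4\times 4$ real matrix $M$ acting on the coordinate vector $v=(x_1,x_2,x_3,x_4)^T$, so that $L(x)$ corresponds to $Mv$ and hence $|L(x)|^2 = v^T(M^TM)v$, giving $P=M^TM$. The eigenvalues of $P$ are then the squares of the singular values of $M$, so it suffices to understand $M$ well enough to read off $M^TM$. Because the claimed eigenvalues depend on $a,b$ only through $\RE a, \RE b, |\!\IM a|, |\!\IM b|$, I expect a normalization step: using the rotational symmetry of $\mbh$ one may conjugate so that $\IM a$ and $\IM b$ lie in a convenient two-dimensional subspace, say $\IM a = i|\!\IM a|$ and $\IM b$ in the $i$--$j$ plane, which should block-diagonalize the computation.

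First I would split $a=\RE a+\IM a$ and $b=\RE b+\IM b$ and treat the four contributions to $M$ separately: the scalar parts $\RE a$ and $\RE b$ both act as $(\RE a+\RE b)\,\mathrm{Id}$ on $v$, while left multiplication by $\IM a$ and right multiplication by $\IM b$ are each represented by antisymmetric matrices (the matrices of $x\mapsto Ix$ and $x\mapsto xJ$ for imaginary units $I,J$). Writing $s=\RE a+\RE b$, I would get $M = s\,\mathrm{Id} + A + B$ where $A,B$ are the antisymmetric matrices of left-multiplication by $\IM a$ and right-multiplication by $\IM b$ respectively. The key structural facts I would establish are $A^2=-|\!\IM a|^2\,\mathrm{Id}$, $B^2=-|\!\IM b|^2\,\mathrm{Id}$ (since squaring an imaginary quaternion of norm $r$ gives $-r^2$), and crucially that left and right multiplication commute, so $AB=BA$.

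With $A,B$ commuting antisymmetric matrices squaring to scalars, I would compute $P=M^TM=(s\,\mathrm{Id}-A-B)(s\,\mathrm{Id}+A+B)$, using $A^T=-A$, $B^T=-B$. Expanding and using $A^2=-|\!\IM a|^2$, $B^2=-|\!\IM b|^2$ and $AB=BA$ yields
\[
P = \bigl(s^2+|\!\IM a|^2+|\!\IM b|^2\bigr)\,\mathrm{Id} - 2AB .
\]
The remaining task is to diagonalize the single matrix $AB$: since $A,B$ commute and are antisymmetric with $A^2,B^2$ scalar, $AB$ is symmetric with $(AB)^2 = A^2B^2 = |\!\IM a|^2|\!\IM b|^2\,\mathrm{Id}$, so its eigenvalues are $\pm|\!\IM a||\!\IM b|$, each with multiplicity two (the multiplicity following from a trace or rank computation, or directly from the normalized form). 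Substituting gives eigenvalues $s^2+|\!\IM a|^2+|\!\IM b|^2 \mp 2|\!\IM a||\!\IM b| = s^2+(|\!\IM a|\pm|\!\IM b|)^2$, each doubled, which is exactly the claim.

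The main obstacle is verifying the multiplicity-two structure of the eigenvalues $\pm|\!\IM a||\!\IM b|$ of $AB$ cleanly, rather than by brute-force $4\times4$ computation; the cleanest route is the normalization to $\IM a=i|\!\IM a|$, $\IM b=(\cos t\,i+\sin t\,j)|\!\IM b|$, where $A$ and $B$ become explicit block matrices and $AB$ visibly splits into two identical $2\times2$ blocks, making both the eigenvalues and their equal multiplicities transparent. I would therefore carry out the reduction in this normalized frame, noting that the eigenvalues of $P$ are conjugation-invariant so no generality is lost.
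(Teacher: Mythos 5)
Your proposal is correct, and it proves the lemma by a genuinely different route from the paper, whose entire proof is the statement that the eigenvalues ``can be calculated through direct computation'' together with a pointer to \cite[Theorem 2.3.3]{Rod}. Your argument is self-contained and structural: writing $M=s\,\mathrm{Id}+A+B$ with $s=\RE a+\RE b$, $A$ the matrix of left multiplication by $\IM a$ and $B$ the matrix of right multiplication by $\IM b$, the facts $A^{T}=-A$, $B^{T}=-B$ (multiplication by an imaginary quaternion is antisymmetric), $A^{2}=-|\IM a|^{2}\,\mathrm{Id}$, $B^{2}=-|\IM b|^{2}\,\mathrm{Id}$, and $AB=BA$ (associativity of $\mbh$) yield $P=M^{T}M=\bigl(s^{2}+|\IM a|^{2}+|\IM b|^{2}\bigr)\mathrm{Id}-2AB$, reducing the lemma to the spectrum of the symmetric matrix $AB$. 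What your route buys is a conceptual explanation of why the answer depends on $a,b$ only through $\RE a+\RE b$, $|\IM a|$, $|\IM b|$, with no $4\times4$ matrix ever written down; what the paper's route buys is brevity and a standard reference. The one step you leave informal is the multiplicity: $(AB)^{2}=|\IM a|^{2}|\IM b|^{2}\,\mathrm{Id}$ and symmetry give only that the eigenvalues of $AB$ lie in $\{\pm|\IM a||\IM b|\}$, so you genuinely need the trace-zero (or normalized-frame) argument you sketch. Both of your suggested fixes work; perhaps the cleanest is to note that for unit imaginary $I=\IM a/|\IM a|$, $J=\IM b/|\IM b|$ the map $x\mapsto IxJ$ is conjugate, via the orthogonal map $x\mapsto uxv^{-1}$ where $uIu^{-1}=i=vJv^{-1}$, to the map $x\mapsto ixi$, whose matrix in the basis $\{1,i,j,k\}$ is $\mathrm{diag}(-1,-1,1,1)$; hence $AB$ has eigenvalues $\pm|\IM a||\IM b|$, each with multiplicity exactly two, and the degenerate case $\IM a=0$ or $\IM b=0$ is trivial since then $AB=0$ and $\lambda^{+}=\lambda^{-}$. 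With that detail filled in, your proof is complete and correct.
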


\begin{proof}
 The eigenvalues can be calculated through direct computation. Refer, for example, to \cite[Theorem 2.3.3]{Rod}.
\end{proof}

\begin{proof}[Proof of Theorem~\ref{thm:ab}]
    Let $v=(x_1,x_2,x_3,x_4)^T$. Then Lemma~\ref{lem:eigenofx} shows that there exists an orthogonal matrix $Q$ such that 
    \[|ax+xb|^2=v^TQ^T\Lambda Qv,\]
    where $\Lambda=\diag(\lambda_1^{+},\lambda_2^{+},\lambda_1^{-},\lambda_2^{-})$. Put $u=Qv$. By the orthogonal invariance of Haar measure on $S^3$, the vector $u$ is again uniformly distributed on $S^3$. Therefore
    \[
    |ax+xb|^2=\lambda^+(u_1^2+u_2^2)+\lambda^-(u_3^2+u_4^2).
    \]
    We now write
    \[
    u=(\sqrt{1-t}\cos\varphi,\sqrt{1-t}\sin\varphi,\sqrt{t}\cos\psi,\sqrt{t}\sin\psi),
    \]
    where $0\leq t\leq 1$ and $0\leq\varphi,\psi<2\pi$. In these coordinates the probability Haar measure is
    \[
    \dif\mu_{S^3}=\frac{1}{4\pi^2}\dif t\dif\varphi\dif\psi.
    \]
    Thus $t=u_3^2+u_4^2$ is uniformly distributed on $[0,1]$. It follows that
    \begin{align*}
        m_{\mbh}&(L)=\int_{\mbt^1(\mbh)}\log|ax+xb|\dif \mu(x)\\
        &=\frac{1}{4\pi^2}\int_0^1\int_0^{2\pi}\int_0^{2\pi}\frac{1}{2}\log\bigl(\lambda^+(1-t)+\lambda^-t\bigr)\dif\varphi\dif\psi\dif t\\
        &=\frac{1}{2}\int_0^1\log\bigl(\lambda^+(1-t)+\lambda^-t\bigr)\dif t.
    \end{align*}
If $\lambda^+\ne\lambda^-$, the last integral gives   
\[
        m_{\mbh}(L)=\frac{1}{2}\left(\frac{\lambda^{+}\log\lambda^{+}-\lambda^{-}\log \lambda^{-}}{\lambda^{+}-\lambda^{-}}-1\right).
\]
If $\lambda^+=\lambda^-$, then $L$ is not identically zero only when $\lambda^+=\lambda^->0$, and the same integral equals $\frac{1}{2}\log\lambda^+$, which is the limiting value of the displayed expression.
\end{proof}

Here we also exhibit a special case when $a=-b$. In particular, this gives the quaternionic Mahler measure of the commutator $[a,x]=ax-xa$.
\begin{thm}\label{thm:a=b}
    Let $a\in \mbh$ and $c\in \mbr$ be non-zero. Let $L(x)=ax-xa+c$.     If $|\!\IM a|\ne0$, then
    \[m_{\mbh}(L)=\log |\!\IM a|+\frac{1}{8}\left((4+c_{0}^2)\log(4+c_{0}^2)-2c_{0}^2\log c_{0}-4\right),\]
    where $c_{0}=|c|/|\!\IM a|$.
\end{thm}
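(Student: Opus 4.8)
The plan is to reduce the computation to the previous theorem. Since $L(x) = ax - xa + c$ with $c \in \mathbb{R}$, I would first normalize: because the quaternionic Mahler measure is invariant under the similarity $x \mapsto p^{-1}xp$ (which preserves $\mathbb{T}^1(\mathbb{H})$ and its Haar measure), I may conjugate $a$ into a convenient slice. Writing $a = \mathrm{Re}\,a + I\,|\!\IM a|$ for suitable $I \in \mathbb{S}$, the commutator $ax - xa$ only sees the imaginary part of $a$, since the real part commutes with everything. Thus $L(x) = (\IM a)x - x(\IM a) + c$, and the problem depends only on $|\!\IM a|$ and $c$. After scaling out $|\!\IM a|$ (contributing the $\log|\!\IM a|$ term via $m_{\mathbb{H}}(\lambda P) = \log|\lambda| + m_{\mathbb{H}}(P)$), I reduce to the normalized case $a = I$ and a rescaled constant $c_0 = |c/\!\IM a|$.

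Next I would set up the integral $|L(x)|^2 = v^T P v$ as in \eqref{eq:vpv}, now including the affine constant $c$. The key step is to diagonalize the quadratic part. With $a = -b = \IM a$ in the Sylvester form, Lemma~\ref{lem:eigenofx} applies with $\mathrm{Re}\,a + \mathrm{Re}\,b = 0$ and $|\!\IM b| = |\!\IM a|$, giving eigenvalues $\lambda^+ = 4|\!\IM a|^2$ and $\lambda^- = 0$ for the homogeneous part $|ax - xa|^2$. The vanishing of $\lambda^-$ reflects that $ax - xa$ annihilates the two-dimensional subspace of $x$ commuting with $a$. So after the orthogonal change of variables the expression $|ax - xa + c|^2$ becomes, in the $\SU(2)$ or hyperspherical coordinates, an expression of the form $4|\!\IM a|^2 \cdot (\text{two squared coordinates}) + c^2$, i.e.\ essentially $4|\!\IM a|^2\,|w|^2 + c^2$ after identifying the relevant coordinate combination with $|w|$ in the $\SU(2)$-model.

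The computation then reduces to a single one-dimensional integral. Following the same steps as in the proof of Theorem~\ref{thm:ab}, I would write
\[
m_{\mathbb{H}}(L) = \frac{1}{2}\int_{\SU(2)}\log\bigl(4|\!\IM a|^2\,|w|^2 + c^2\bigr)\,\frac{\dif z}{z}\,\dif w\,\dif\overline{w},
\]
and after integrating out $z$ reduce to an integral over the disk $0 \le |w| \le 1$. Factoring out $|\!\IM a|^2$ produces the $\log|\!\IM a|$ term, and substituting $c_0 = |c/\!\IM a|$ leaves $\frac{1}{2}\int \log(4|w|^2 + c_0^2)$ over the unit disk. Passing to the radial variable $r = |w|$ gives an elementary integral $\int_0^1 \log(4r^2 + c_0^2)\,r\,\dif r$, which evaluates by parts to the stated combination $\frac{1}{8}\bigl((4 + c_0^2)\log(4 + c_0^2) - 2c_0^2\log c_0 - 4\bigr)$.

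The main obstacle I anticipate is bookkeeping in the diagonalization: verifying that the affine constant $c$ survives the orthogonal change of variables as a pure additive $c^2$ term (rather than mixing with the quadratic part), and confirming that the degenerate eigenvalue $\lambda^- = 0$ correctly collapses two of the four coordinates so that the integrand depends only on $|w|$. This requires checking that $c$, being real, lies in the kernel direction of the commutator map in a way compatible with the eigenbasis of $P$; once that alignment is confirmed, the remaining integral is routine. I would take care to handle the $c_0 \to 0$ behavior (where $\log c_0 \to -\infty$ but $c_0^2 \log c_0 \to 0$) to confirm the formula degenerates correctly to the commutator case $m_{\mathbb{H}}(ax - xa) = \log|\!\IM a| + \frac{1}{2}(\log 4 - 1)$.
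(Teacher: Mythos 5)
Your proposal is correct, and its skeleton matches the paper's proof: normalize away $\RE a$ and the factor $|\!\IM a|$, reduce to $Ix-xI+c_0$, and end with a logarithmic integral over the unit disk --- your final integral $\int_0^1\log(4r^2+c_0^2)\,r\,\dif r$ is exactly the one the paper evaluates. Where you diverge is the middle step: you diagonalize the quadratic form abstractly via Lemma~\ref{lem:eigenofx} with $b=-a$ (giving $\lambda^{+}=4|\!\IM a|^2$, $\lambda^{-}=0$), whereas the paper works in slice coordinates $x=z+Jw$ with $z,w\in\C_{I}$, $J\perp I$, $K=IJ$, and computes directly that $Ix-xI=2Kw$, so the integrand is $\log|2Kw+c_0|$ with $|2Kw+c_0|^2=4|w|^2+c_0^2$. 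This difference matters only for the ``main obstacle'' you flag: in your route one must verify that the real constant enters as a pure additive $c^2$, while in the paper's coordinates this is automatic because $2Kw\in\mathrm{span}\{J,K\}$ is orthogonal to $\R$. Your obstacle is settled in one line, which you should include to close the gap: $\RE(ax-xa)=\RE(ax)-\RE(xa)=0$ for every $x$, so the cross term $2c\,\RE(ax-xa)$ in $|ax-xa+c|^2$ vanishes identically and $|ax-xa+c|^2=|ax-xa|^2+c^2$. Note that what is needed is orthogonality of $c$ to the \emph{image} of the commutator map, not membership of $c$ in its kernel as you phrased it; the two happen to coincide here because $x\mapsto ax-xa$ is skew-adjoint for the real inner product on $\mbh$, but the direct computation of the real part is the cleaner justification. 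With that line supplied, your argument, including the $c_0\to0$ degeneration to $\log|\!\IM a|+\log 2-\tfrac{1}{2}$ (which is exactly the value the paper uses later for $\mH(xy-yx)$), is a complete proof.
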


\begin{proof}
    Let us put $a=\RE a+I|\!\IM a|$. Then we get
    \begin{align*}
        m_{\mbh}(ax&-xa+c)=m_{\mbh}(I|\!\IM a|x-xI|\!\IM a|+c)\\
       &=\log |\!\IM a|+m_{\mbh}(Ix-xI+c_{0}).
    \end{align*}
    
Observe that $I$ commutes with $\mbc_{I}$, then letting $I,J,K=IJ$ be an orthogonal basis of $\S$ we have 
\[\log|I(z+Jw)-(z+Jw)I+c_{0}|=\log|2Kw+c_{0}|.\]
Let $w=u-Iv$, then the integral can be simplified to an integral on the unit disk 
\begin{align*}
    m_{\mbh}(Ix-xI+c_{0})&=\frac{1}{4\pi^2}\int_{|z|^2+|w|^2=1}\log|2Kw+c_{0}|\frac{\dif z}{z}\dif w\dif \overline{w}\\
    &=\frac{i}{2\pi}\int_{0\leq |w|\leq 1}\log|2K w+c_{0}|\dif w\dif \overline{w}.
\end{align*}
Expanding this expression explicitly, we get
\begin{align*}
    m_{\mbh}(Ix-xI+c_{0})&=\frac{1}{2\pi}\int_{u^2+v^2\leq 1}\log\left(c_{0}^2+4u^2+4v^2\right)\dif u\dif v\\
   &=\frac{1}{8}\left((4+c_{0}^2)\log(4+c_{0}^2)-2c_{0}^2\log c_{0}-4\right).\qedhere
\end{align*}
\end{proof}

\section{Multivariable quaternionic Mahler measures}\label{sec7}

This section is dedicated to providing some examples of multivariable quaternionic Mahler measures. Recall that for a non-commutative multivariable quaternionic polynomial $P(x_1,\dots,x_n)$, its Mahler measure is the integral
\[\mH(P)=\int_{\mbt^n(\mbh)}\log\abs{P(x_1,\dots,x_n)}\dif\mu(\mbt^n(\mbh)).\]

The theoretical framework concerning slice regular functions of multiple quaternionic variables is elucidated in Ghiloni--Perotti~\cite{GP2}. Multivariate slice regular polynomials, as a special kind of slice regular functions, are polynomial  functions $P(x_1,\dots,x_n)$ on $\mbh^n$ of the form
\[P(x_1,\dots,x_n)=\sum_{0\leq \ell_1+\cdots+\ell_n\leq \deg(P)} x_{1}^{\ell_1}\dots x_{n}^{\ell_n}a_{\ell_1,\dots,\ell_n},\] 
where $a_{\ell_1,\dots,\ell_n}\in\mbh$. In the recent papers~\cite{GSV1, GSV2}, Gori, Sarfatti and Vlacci study the zero sets and Hilbert Nullstellensatz of slice regular polynomials in several variables. 

In the theorem presented below, we confirm the existence of Mahler measures for slice regular polynomials of two quaternion variables.
\begin{thm}
    Let $P(x,y)$ be a slice regular quaternionic polynomial of two variables which is not identically zero on $\mbt^2(\mbh)$. Then its Mahler measure $\mH(P)$ exists as an improper Riemann integral.
\end{thm}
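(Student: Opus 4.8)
The plan is to prove existence by integrating out one quaternion variable at a time, reducing to the already-established one-variable theory. Grouping the monomials by the power of the first variable, write
\[
P(x,y)=\sum_{\ell=0}^{d}x^{\ell}Q_{\ell}(y),\qquad Q_{\ell}(y)=\sum_{m}y^{m}a_{\ell m},
\]
so that each $Q_{\ell}$ is a one-variable slice regular polynomial. For every fixed $y_{0}\in\mbt^{1}(\mbh)$ the slice $P(\,\cdot\,,y_{0})=\sum_{\ell}x^{\ell}Q_{\ell}(y_{0})$ is again a one-variable slice regular polynomial in $x$, and it is not identically zero unless $y_{0}$ lies in the common zero set $\bigcap_{\ell}\mathcal{Z}_{Q_{\ell}}$; by the Fundamental Theorem of Algebra~\ref{thm:FTAQ} this set is a finite union of spheres and isolated points, hence of measure zero. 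Thus, by Fubini together with Theorem~\ref{thm:existence}, the inner integral $g(y_{0})=\int_{\mbt^{1}(\mbh)}\log\abs{P(x,y_{0})}\,\dif\mu(x)$ exists for almost every $y_{0}$, and the whole problem collapses to showing that $g\in L^{1}(\mbt^{1}(\mbh))$, equivalently that $\int_{\mbt^{2}(\mbh)}\abs{\log\abs{P}}\,\dif\mu<\infty$.

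To secure this integrability I would establish a two-variable analogue of Lemma~\ref{lem:measurebound}: for every $\varepsilon>0$ there are constants $C,\delta>0$, depending only on $P$, with
\[
\mu_{\mbh}\big(\{(x,y)\in\mbt^{2}(\mbh)\mid \abs{P(x,y)}\le\varepsilon\}\big)\le C\varepsilon^{\delta},
\]
where $\mu_{\mbh}$ denotes the probability Haar measure on $\mbt^{2}(\mbh)$. Granting this, the argument of Theorem~\ref{thm:existence} carries over: the part of the integral over $\{\abs{P}\le\varepsilon\}$ is $O(\varepsilon^{\delta}\log\varepsilon)$ as $\varepsilon\to0$, while on the complement $\log\abs{P}$ is bounded. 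To prove the bound itself I would first reduce to the slice preserving case: applying the Splitting Lemma~\ref{lem:splitlemma} in the two-variable Ghiloni--Perotti setting and forming the symmetrization $P^{s}=P\ast P^{c}$, a Cauchy--Schwarz estimate exactly as in Lemma~\ref{lem:measurebound} yields $\abs{P^{s}(x,y)}\le\abs{P(x,y)}\,\abs{P(\overline{x},\overline{y})}\le M\abs{P(x,y)}$ with $M=\max_{\mbt^{2}(\mbh)}\abs{P}$, so that $\{\abs{P}\le\varepsilon\}\subseteq\{\abs{P^{s}}\le M\varepsilon\}$ and it suffices to bound the sublevel sets of the real polynomial $P^{s}$, which one attacks fiberwise through the complex estimate of Everest--Ward (Lemma~\ref{lem:EW}).

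The hard part will be the uniformity of the fiberwise estimate as $y_{0}$ varies. The one-variable bound of Lemma~\ref{lem:measurebound} carries constants $C,\delta$ that depend on the polynomial and degenerate as $P(\,\cdot\,,y_{0})$ degenerates, for instance when the leading coefficient $Q_{d}(y_{0})$ tends to $0$ and the effective degree drops, so one cannot merely integrate the one-variable estimate in $y_{0}$. The resolution I expect is that every such degeneration is governed by the vanishing of finitely many fixed slice regular polynomials in $y_{0}$, namely the coefficients $Q_{\ell}(y_{0})$ and resultant-type combinations of them: using the one-variable Jensen formulas of Theorem~\ref{thm:Jensenformulas} one sees that the only unbounded contributions to $g(y_{0})$ are logarithms of the moduli $\abs{Q_{\ell}(y_{0})}$ and of these combinations, each of which is integrable over $y_{0}$ precisely because it is the integrand of a \emph{convergent} one-variable Mahler measure, by Theorem~\ref{thm:existence} applied to the coefficient polynomials. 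Assembling these local estimates and invoking compactness of $\mbt^{2}(\mbh)$ then yields the global measure bound, and hence the convergence of $\mH(P)$ as an improper Riemann integral.
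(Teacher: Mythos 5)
Your overall strategy --- reduce everything to the sublevel-set bound $\mu_{\mbh}\left(\big\{(x,y)\in\mbt^{2}(\mbh)\,\big{|}\,\abs{P(x,y)}\leq\varepsilon\big\}\right)\leq C\varepsilon^{\delta}$ and then conclude as in Theorem~\ref{thm:existence} --- matches the paper's, and your Fubini discussion is harmless. The genuine gap is the step you rely on to prove that bound: the claim that the multivariable symmetrization $P^{s}=P\ast P^{c}$ satisfies $\abs{P^{s}(x,y)}\le\abs{P(x,y)}\,\abs{P(\overline{x},\overline{y})}$, hence $\{\abs{P}\le\varepsilon\}\subseteq\{\abs{P^{s}}\le M\varepsilon\}$. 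This inequality is false in two variables. The paper's remark following this very theorem records the counterexample of Gori--Sarfatti--Vlacci: for $P(x,y)=xy-k$ one has $P^{s}(x,y)=x^{2}y^{2}+1$, and at the point $(i,j)\in\mbt^{2}(\mbh)$ one computes $P(i,j)=ij-k=0$, $P(\overline{i},\overline{j})=P(-i,-j)=(-i)(-j)-k=0$, while $P^{s}(i,j)=2$; your claimed containment would force $2\le 0$. So a zero of $P$ need not even lie in the zero set of $P^{s}$, which is exactly why, as the paper puts it, the one-variable proof is ``ineffective in the multivariable case''. The underlying reason is that the one-variable Cauchy--Schwarz estimate rests on the Splitting Lemma~\ref{lem:splitlemma} and the pointwise-product formula (Lemma~\ref{lem:pointwiseprod}), both of which live on a single slice $\C_{I}$; on $\mbt^{2}(\mbh)$ the two variables generally lie in different slices and do not commute, and evaluation of a two-variable $\ast$-product is no longer a twisted pointwise product.

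The paper's fix, which you could adopt to salvage your outline, is to symmetrize in only one variable: it forms $Q(x,y)=\sum_{\ell}x^{\ell}\sum_{h}\sum_{h_1,h_2}y^{h_1}\,\overline{y}^{h_2}a_{h,h_1}\overline{a}_{\ell-h,h_2}$, the symmetrization of $P$ with respect to $x$ alone. For each fixed $y$ this is the ordinary one-variable symmetrization of the slice regular polynomial $P(\cdot\,,y)$, so the Cauchy--Schwarz estimate of Lemma~\ref{lem:measurebound} applies fiberwise and legitimately gives $\{\abs{P}\le\varepsilon\}\subseteq\{\abs{Q}\le M\varepsilon\}$. Since $Q$ is real as a polynomial in $x$, it factors into linear and quadratic real factors whose coefficients $A,\alpha_{\ell},\beta_{\ell},\gamma_{h}$ are algebraic functions of $\RE y$ and $\abs{\!\IM y}$ only; if $\abs{Q}<\varepsilon$, a pigeonhole forces one factor below $\varepsilon^{1/(n+1)}$, and each factor's sublevel set reduces to a complex estimate via Lemma~\ref{lem:EW}. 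This factor-by-factor reduction is also what actually resolves the uniformity-in-$y$ degeneration that you correctly flag but address only heuristically: your closing paragraph (``the resolution I expect \dots'') is a statement of hope, not an argument, and in any case it sits downstream of the invalid symmetrization step.
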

\begin{proof}
    As in the proof of Theorem~\ref{thm:existence}, we need to establish the estimate
    \[\mu_{\mbh}\left(\big\{(x,y)\in\mbt^{2}(\mbh)\,\big{|}\,\abs{P(x,y)}\leq\varepsilon\big\}\right)\leq C\varepsilon^{\delta}.\]
   
Let $Q(x,y)$ be the following one-variable symmetrization of $P(x,y)$ with respect to the variable $x$,
\[
Q(x,y)=\sum_{\ell}x^{\ell}\sum_{h}\sum_{h_1,h_2}y^{h_1}\,\overline{y}^{h_2}a_{h,h_1}\overline{a}_{\ell-h,h_2}.
\]
Since $P(x,y)$ is left slice regular in $x$, using the Cauchy--Schwarz estimate as in Lemma~\ref{lem:measurebound}, it is enough to settle  
\[\mu_{\mbh}\left(\big\{(x,y)\in\mbt^{2}(\mbh)\,\big{|}\,\abs{Q(x,y)}\leq\varepsilon\big\}\right)\leq C\varepsilon^{\delta}.\]
The function $Q(x,y)$ is a real polynomial with respect to the variable $x$. We start by induction on the degree $n$ of $Q(x,y)$ in $x$. We may factor $Q(x,y)$ by
\[Q(x,y)=A(y,\overline{y})\prod_{\ell=1}^{m_1}(x^2-2\alpha_{\ell}(y,\overline{y})x+\beta_{\ell}(y,\overline{y}))\prod_{h=1}^{m_2}(x-\gamma_{h}(y,\overline{y})),\]
where $A,\alpha_{\ell},\beta_{\ell}$ and $\gamma_{h}$ are real-valued algebraic functions with respect to $y$ and $\overline{y}$. Note that these functions rely only on $\RE y$ and $|\!\IM y|$, and do not depend on the imaginary unit within $y=\RE y+I|\!\IM y|$. Thus, if $|Q(x,y)|<\varepsilon$, then either
\[\abs{A(y,\overline{y})}\leq \varepsilon^{\frac{1}{n+1}},\]
or one of the real factors
\[
\abs{x^2-2\alpha_{\ell}(y,\overline{y})x+\beta_{\ell}(y,\overline{y})}\quad\text{or}\quad |x-\gamma_{h}(y,\overline{y})|\leq \varepsilon^{\frac{1}{n+1}}.
\]

In the first case, $A(y,\overline{y})=A(y,y^{-1})$ is a Laurent polynomial that relies only on $\RE y$ and $|\!\IM y|$. So we can deduce that
\[
\mu_{\mbh}\left(\big\{y\in\mbt^{1}(\mbh)\,\big{|}\,\abs{A(y,y^{-1})}\leq\varepsilon\big\}\right)\leq 2\mu_{\C}\left(\big\{y\in\mbt^{1}(\C)\,\big{|}\,\abs{A(y,y^{-1})}\leq\varepsilon\big\}\right).
\]
The latter is thusly controlled by $C_1\varepsilon^{\delta_1}$ with the help of Lemma~\ref{lem:EW}.

In the latter cases, by the same fashion, we have
\begin{align*}
    \mu_{\mbh}\big(\big\{(x,y)&\in\mbt^{2}(\mbh)\,\big{|}\,\abs{x-\gamma_{h}(y,\overline{y})}\leq\varepsilon\big\}\big)\\
    &\leq 4\mu_{\C}\left(\big\{(x,y)\in\mbt^{2}(\C)\,\big{|}\,\abs{x-\gamma_{h}(y,y^{-1})}\leq\varepsilon\big\}\right).
\end{align*}
For fixed $y\in\C$, using again Lemma~\ref{lem:EW} for $P(x)=x-\gamma_{h}(y,y^{-1})$, it is apparent that
\[\mu_{\C}\left(\big\{x\in\mbt^{1}(\C)\,\big{|}\,\abs{x-\gamma_{h}(y,y^{-1})}\leq\varepsilon\big\}\right)\leq C_2\varepsilon^{\delta_2}.\]
This makes us realize that
\[\mu_{\mbh}\left(\big\{(x,y)\in\mbt^{2}(\mbh)\,\big{|}\,\abs{x-\gamma_{h}(y,y^{-1})}\leq\varepsilon\big\}\right)\leq C_2\varepsilon^{\delta_2}.\]
For the quadratic factor $x^2-2\alpha_{\ell}(y,\overline{y})x+\beta_{\ell}(y,\overline{y})$, using the same approach yields the same bound. 

Putting everything together, we finally conclude that there exist  $C,\delta>0$ so that
\[\mu_{\mbh}\left(\big\{(x,y)\in\mbt^{2}(\mbh)\,\big{|}\,\abs{P(x,y)}\leq\varepsilon\big\}\right)\leq C\varepsilon^{\delta}.\qedhere\]
\end{proof}

\begin{rmk}
The power bounds for small-value sets established in Lemma~\ref{lem:measurebound} and in the proof above can also be obtained from general polynomial sublevel estimates. For instance, after writing the relevant quaternionic polynomial in real coordinates, one may apply the Carbery--Wright inequality~\cite{CW} to obtain a bound of the form $\mu\{|P|\leq\varepsilon\}\leq C\varepsilon^{\delta}$. A general discussion of this approach will be given in future work.
\end{rmk}


Subsequently, we compute the quaternionic Mahler measure $\mH(1+x+y)$.
\begin{proof}[Proof of Theorem \ref{thm:Smyth}]
By changing the variable $y\to yx$, we get $m_{\mbh}(x+y+1)=m_{\mbh}(x+yx+1)$. By definition, we have 
\begin{align*}
    m_{\mbh}((1+y)x+1)&=\int_{\mbt^2(\mbh)}\log|(1+y)x+1|\dif \mu(x)\dif \mu(y)\\
    &=\frac{2}{\pi}\int_0^{\pi}\int_{\mbt^1(\mbh)}\log|(1+e^{i\theta})x+1|\sin^2\theta\dif \mu(x)\dif \theta.
\end{align*}
Since $|1+e^{i\theta}|\geq 1$ if and only if $\theta\in [0,\frac{2}{3}\pi]$, by Lemma~\ref{lem:jenformula}, we have 
\begin{align*}
    &m_{\mbh}(x+y+1)\\
    =&\frac{1}{2\pi}\int_{\frac{2}{3}\pi}^{\pi}|1+e^{i\theta}|^2\sin^2\theta\dif \theta+\frac{1}{2\pi}\int_0^{\frac{2}{3}\pi}\frac{\sin^2\theta}{|1+e^{i\theta}|^{2}}\dif \theta+\frac{2}{\pi}\int_0^{\frac{2}{3}\pi}\log|1+e^{i\theta}|\sin^2\theta\dif \theta.
\end{align*}
Denote the above integrals by $I_1,I_2$ and $I_3$ respectively. The integrals $I_1,I_2$ are easy to calculate via $|1+e^{i\theta}|^2=2+2\cos\theta$. A direct calculation gives
\[I_1=\frac{1}{6}-\frac{\sqrt{3}}{4\pi}\quad\text{and}\quad I_2=\frac{1}{6}-\frac{\sqrt{3}}{8\pi}.\]
For the third integral, we have 
\[I_3=\frac{2}{\pi}\RE \int_0^{\frac{2}{3}\pi}\log(1+e^{i\theta})\sin^2\theta\dif\theta=-\frac{1}{2\pi i}\IM\int_1^{\rho}\log(1+z)\Big(z-\frac{1}{z}\Big)^2\,\frac{\dif z}{z},\]
where $\rho=e^{\frac{2}{3}\pi i}$. Since
\begin{align*}
    \int \Big(z+&\frac{1}{z^3}\Big)\log(1+z)\dif z\\
    &=\frac{1}{4z^2}\big(2(z^4-1)\log(1+z)-2z^2\log(z)-2z+2z^3-z^4\big),
\end{align*}
it can be deduced that
\[\IM\int_1^{\rho} \Big(z+\frac{1}{z^3}\Big)\log(1+z)\dif z=\Big(\frac{5\sqrt{3}}{8}-\frac{\pi}{3}\Big)i.\]
Finally, for the integral of $\log(1+z)/z$, by expanding $\log(1+z)$ in a Taylor series, we have
\begin{align*}
    \IM&\int_1^{\rho}\frac{\log(1+z)}{z}\dif z=\IM\int_1^{\rho}\sum_{n\geq 1}\frac{(-1)^{n-1}z^{n-1}}{n}\dif z\\
    &=\sum_{n\geq 1}\frac{(-1)^{n-1}i\sin \frac{2n\pi}{3}}{n^2}=\frac{3\sqrt{3}\,i}{4}L(\chi_{-3},2).
\end{align*}
Combining these results together, we get the desired conclusion.
\end{proof}

More generally, we can consider the Mahler measure of $ax+by+c$. If $|a|,|b|,|c|$ form a triangle, the above calculation still works, yet a general formula would be of much difficulty. If $|a|,|b|,|c|$ do not form a triangle, following the same argument as Maillot~\cite{Mai}, we are able to give the proof of~\ref{thm:Maillot}.

\begin{proof}[Proof of Theorem~\ref{thm:Maillot}]
    By multiplying the polynomial on the left by a unit quaternion and changing the variables $x$ and $y$ by unit factors, we may assume that $a,b,c\in\mbr^+$. Without loss of generality, suppose that $a\geq b\geq c$. By our assumption, $|a|,|b|,|c|$ do not form a triangle, so for any $y\in \mbt^1(\mbh)$, we have $|a+by|\geq a-b\geq c$. For a fixed $y\in \mbt^1(\mbh)$, by Theorem \ref{thm:Jensenformulas}, we have 
    \[\int_{\mbt^1(\mbh)}\log|(a+by)x+c|\dif \mu(x)=\log|a+by|+\frac{c^2}{4|a+by|^2}.\]
    So we get
    \begin{align*}
        m_{\mbh}(ax+by+c)&=\int_{\mbt^1(\mbh)}\int_{\mbt^1(\mbh)}\log|ax+byx+c|\dif \mu(x)\dif \mu(y)\\
        &=\int_{\mbt^1(\mbh)}\log|a+by|+\frac{c^2}{4|a+by|^2}\dif \mu(y)\\
        &=\frac{2}{\pi}\int_0^{\pi}\left(\log|a+be^{i\theta}|+\frac{c^2}{4|a+be^{i\theta}|^2}\right)\sin^2\theta\dif \theta.
    \end{align*}
    By applying Lemma \ref{lem:jenformula}, the first term equals $\log a+\frac{b^2}{4a^2}$. Following from Theorem \ref{thm:zetameaofx}, the second term equals $\frac{c^2}{4a^2}$. These combine to complete the formula.
\end{proof}

In the following theorem, we illustrate an example of the Mahler measure for a slice regular polynomial in two variables.

\begin{thm}\label{thm:x2-y2} We have the identity
\[\mH(x^2-y^2)=0.\]
\end{thm}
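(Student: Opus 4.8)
The plan is to integrate one variable at a time. Writing
\[
m_{\mbh}(x^2-y^2)=\int_{\mbt^1(\mbh)}\Big(\int_{\mbt^1(\mbh)}\log\abs{x^2-y^2}\,\dif\mu(x)\Big)\dif\mu(y),
\]
I would first fix $y$ and observe that $\alpha\coloneqq y^2$ is a unit quaternion, so the inner integral is exactly the one-variable Mahler measure $m_{\mbh}(x^2-\alpha)$. Since $\alpha$ lies on the slice $\C_{I}$ containing it, the coefficients $1,0,-\alpha$ all lie in $\C_{I}$, so $x^2-\alpha$ is one-slice preserving. Just as in the proof of Theorem~\ref{thm:Jensenformulas}, substituting $x\mapsto q_0^{-1}xq_0$ (which preserves the Haar measure and satisfies $(q_0^{-1}xq_0)^2=q_0^{-1}x^2q_0$) shows that $m_{\mbh}(x^2-\alpha)$ depends only on the similarity class of $\alpha$, hence only on $\RE\alpha$. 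I may therefore assume $\alpha=e^{i\psi}\in\C$ with $\cos\psi=\RE\alpha$, and set $f(\psi)\coloneqq m_{\mbh}(x^2-e^{i\psi})$.

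Next I would compute $f(\psi)$ via the one-slice preserving formula of Proposition~\ref{prop:pxtheta}, taking $I=i$, $J=j$. With $w=\cos\theta+ie^{j\theta_1}\sin\theta$, and noting that $ie^{j\theta_1}$ is a unit imaginary quaternion, one has $w^2=\cos2\theta+ie^{j\theta_1}\sin2\theta$, and a direct expansion gives
\[
\abs{w^2-e^{i\psi}}^2=2-2\cos2\theta\cos\psi-2\sin2\theta\sin\psi\cos\theta_1.
\]
Substituting $t=\cos\theta_1$ turns the $\theta_1$-average into a uniform average over $t\in[-1,1]$, reducing the inner integral to the elementary
\[
\int_{-1}^{1}\log(A-Bt)\,\dif t=\frac{(A+B)\log(A+B)-(A-B)\log(A-B)}{B}-2,
\]
where $A=2-2\cos2\theta\cos\psi$, $B=2\sin2\theta\sin\psi$, and crucially $A\pm B=2-2\cos(2\theta\pm\psi)$. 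This writes $f(\psi)$ as a single $\theta$-integral of these two logarithmic pieces, minus the constant $\tfrac12$ coming from the $-2$ term.

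Finally I would reinstate the outer $y$-integration. Since the integrand $y\mapsto f(y^2)$ depends on $y$ only through $\RE(y^2)=\cos2\theta_y$, the spherical reduction used in the proof of Proposition~\ref{prop:pxtheta} gives
\[
m_{\mbh}(x^2-y^2)=\frac{2}{\pi}\int_0^{\pi}f(2\theta_y)\sin^2\theta_y\,\dif\theta_y,
\]
an explicit double integral over $(\theta,\theta_y)\in[0,\pi]^2$ whose integrand involves $\log\big(2-2\cos2(\theta\pm\theta_y)\big)$. Each such term has precisely the shape handled by Lemma~\ref{lem:jenformula} with $\abs{\alpha}=1$, so I would integrate first in $\theta$ using that lemma and then show that the resulting elementary and dilogarithmic contributions cancel, against the constant term and against one another, leaving $0$.

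The hard part is this last cancellation. The closed form for $f$ carries the factor $1/B=1/(2\sin2\theta\sin\psi)$, whose apparent singularities on $\{\sin2\theta=0\}\cup\{\sin\psi=0\}$ must be shown to be removable, and the vanishing of the final double integral is delicate: it hinges on pairing the $+$ and $-$ logarithmic branches using the symmetry $\theta_y\mapsto\pi-\theta_y$ together with the exact constants produced by Lemma~\ref{lem:jenformula}. Absolute convergence of all the integrals — and hence the legitimacy of the interchanges of integration — is guaranteed by the measure estimate of Lemma~\ref{lem:measurebound}, exactly as in Theorem~\ref{thm:existence}. I would keep the $\theta$- and $\theta_y$-integrals symmetric throughout to make the cancellation transparent.
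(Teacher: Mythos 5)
Your reductions are all sound as far as they go: the inner integral is indeed $\mH(x^2-\alpha)$ with $\alpha=y^2$, similarity invariance lets you take $\alpha=e^{i\psi}$, the hyperspherical expansion $\abs{w^2-e^{i\psi}}^2=2-2\cos 2\theta\cos\psi-2\sin 2\theta\sin\psi\cos\theta_1$ is correct, and the $\theta_1$-integration and the outer spherical reduction are legitimate. The problem is that this is where your proof stops. Everything after "I would integrate first in $\theta$ \dots and then show that the resulting elementary and dilogarithmic contributions cancel, leaving $0$" is an assertion of the theorem, not a proof of it: the entire content of the statement $\mH(x^2-y^2)=0$ is precisely that cancellation, and you never carry it out. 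Worse, the route you propose for it does not work as described. After the $\theta_1$-integration your integrand consists of terms $\frac{(A\pm B)\log(A\pm B)}{B}\sin^2\theta\,\sin^2\theta_y$ with $B=2\sin 2\theta\sin 2\theta_y$; because of the prefactor $(A\pm B)/B$, the $\theta$-integrals are \emph{not} of the form $\int_0^{\pi}\log\left(1-2a\cos(\theta-\phi)+a^2\right)\sin^2\theta\,\dif\theta$ treated by Lemma~\ref{lem:jenformula}, so that lemma does not apply "precisely" or even directly; you would need new closed-form evaluations with weight $\sin^2\theta/\sin 2\theta$, together with a genuine argument (not just a symmetry heuristic) that all the resulting pieces sum to zero. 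As it stands, the proposal reduces the theorem to a harder-looking unsolved double integral.

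For comparison, the paper avoids this impasse with one algebraic trick: substituting $x\mapsto xy$ (Haar invariance) and factoring $(xy)^2-y^2=(xy-yx^{-1})\,xy$, so that $\mH(x^2-y^2)=\mH(xy-yx^{-1})$ since $\abs{xy}=1$. The inner polynomial in $y$ is then in Sylvester form $ay+yb$ with $a=x$, $b=-x^{-1}$, and Theorem~\ref{thm:ab} (via Lemma~\ref{lem:eigenofx}) gives the closed form $\log\left(4\abs{\!\IM x}^2\right)-1$ for the inner Mahler measure, because $\lambda^{+}=4\abs{\!\IM x}^2$ and $\lambda^{-}=0$. The outer integral then vanishes by the elementary evaluation
\[
\int_{\mbt^{1}(\mbh)}\log\abs{\!\IM x}\,\dif\mu(x)=\frac{2}{\pi}\int_{0}^{\pi}\log(\sin\theta)\sin^2\theta\,\dif\theta=\frac{1}{2}-\log 2.
\]
If you want to salvage your approach, you should either find and prove the closed form of your double integral, or notice that the Sylvester-form result already in the paper collapses the whole computation; the latter is the missing idea.
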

\begin{proof}
The Mahler measure can be rewritten as
\[\mH(x^2-y^2)=\mH((xy)^2-y^2)=\mH(xy-yx^{-1}).\]
Applying Theorem~\ref{thm:ab} in the variable $y$ yields
\[
    \mH(xy-yx^{-1})=\int_{\mbt^{1}(\mbh)}\left(\log(2|\!\IM x|)-\frac{1}{2}\right)\dif\mu(x).
\]
Observe that the integrand depends only on $|\!\IM x|$. Following the same pattern as Proposition~\ref{prop:pxtheta}, we get
\[
\int_{\mbt^{1}(\mbh)} \log{|\!\IM x|}\dif\mu(x)=\frac{2}{\pi}\int_{0}^{\pi}\log(\sin\theta)\sin^2\theta\dif\theta.
\]
The latter integral can be computed in the same manner as Lemma~\ref{lem:jenformula}. Via the expansion
\[\log(2\sin\theta)=-\sum_{n=1}^{\infty}\frac{\cos 2n\theta}{n},\]
one finds that
\begin{align*}
\frac{2}{\pi}\int_{0}^{\pi}\log(\sin\theta)\sin^2\theta\dif\theta&=\frac{2}{\pi}\int_{0}^{\pi}\log(2\sin\theta)\sin^2\theta\dif\theta-\log 2\\
&=\frac{1}{2}-\log 2.
\end{align*}
Thus $\mH(x^2-y^2)=0$.
\end{proof}
We now give two examples of non-slice-regular polynomials in two variables.

\begin{thm}
   We have the following identities
   \[\mH(xy-yx)=0,\]
   \[\mH(xy-yx+1)=\frac{1}{4}\Big(1-\sqrt{5}+3\log\frac{3+\sqrt{5}}{2}\Big).\]
\end{thm}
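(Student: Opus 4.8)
The plan is to compute both measures by holding $x$ fixed and viewing each integrand as a non-commutative \emph{linear} polynomial in the remaining variable $y$, so that the inner integral over $y\in\mbt^1(\mbh)$ is covered by the Sylvester-form results of Section~\ref{sec6}. Concretely, I would evaluate the defining integral by integrating first over $y$ and then over $x$,
\[\mH(P)=\int_{\mbt^1(\mbh)}\Big(\int_{\mbt^1(\mbh)}\log\abs{P(x,y)}\dif\mu(y)\Big)\dif\mu(x),\]
recognising the inner integral as the Mahler measure of a linear polynomial in $y$ whose coefficients depend on the parameter $x$. This iterated procedure is legitimate because both inner integrals will turn out to be finite and to depend only on $|\!\IM x|$.

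For $\mH(xy-yx)$, the inner polynomial is $L(y)=xy-yx=x\cdot y+y\cdot(-x)$, of Sylvester type $ay+yb$ with $a=x$ and $b=-x$. Since $\RE a+\RE b=\RE x-\RE x=0$ and $|\!\IM a|=|\!\IM b|=|\!\IM x|$, Lemma~\ref{lem:eigenofx} gives $\lambda^{+}=4|\!\IM x|^2$ and $\lambda^{-}=0$. Substituting this into Theorem~\ref{thm:ab} (where the $\lambda^{-}\log\lambda^{-}$ term vanishes as $\lambda^{-}\to 0$) produces the inner integral $2\pi^2\big(\log(4|\!\IM x|^2)-1\big)$. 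This is precisely the inner integral appearing in the proof of Theorem~\ref{thm:x2-y2}, whose integral over $x$ was shown to vanish; hence $\mH(xy-yx)=0$.

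For $\mH(xy-yx+1)$, the inner polynomial $L(y)=xy-yx+1$ matches the hypothesis of Theorem~\ref{thm:a=b} with $a=x$ and $c=1\in\mbr$, so the inner integral equals $\log|\!\IM x|+\tfrac18\big((4+c_0^2)\log(4+c_0^2)-2c_0^2\log c_0-4\big)$ with $c_0=1/|\!\IM x|$. As this depends only on $|\!\IM x|$, I would collapse the outer integral by the reduction of Proposition~\ref{prop:pxtheta}, writing $x=\cos\theta+I\sin\theta$ so that $|\!\IM x|=\sin\theta$ and the outer integral becomes $\tfrac{2}{\pi}\int_0^\pi(\cdots)\sin^2\theta\dif\theta$. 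The useful observation is that after multiplying the bracket by $\sin^2\theta$ and using $c_0^2\sin^2\theta=1$ and $(4+c_0^2)\sin^2\theta=4\sin^2\theta+1$, all $\log\sin\theta$ contributions cancel, leaving
\[\mH(xy-yx+1)=\frac{2}{\pi}\int_0^\pi\Big(\tfrac18(4\sin^2\theta+1)\log(4\sin^2\theta+1)-\tfrac12\sin^2\theta\Big)\dif\theta.\]

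The final step, which I expect to be the main obstacle, is to evaluate this definite integral. I would write $4\sin^2\theta+1=3-2\cos2\theta$, substitute $\phi=2\theta$, and reduce the problem to $\tfrac{1}{8\pi}\int_0^{2\pi}(3-2\cos\phi)\log(3-2\cos\phi)\dif\phi-\tfrac12$. To treat the logarithmic integral I would factor $3-2\cos\phi=\tfrac{3+\sqrt5}{2}\,(1-2r\cos\phi+r^2)$ with $r=\tfrac{3-\sqrt5}{2}$, the root $<1$ of $r+r^{-1}=3$, and apply the Fourier expansion $\log(1-2r\cos\phi+r^2)=-2\sum_{n\ge1}\tfrac{r^n}{n}\cos n\phi$. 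Because $3-2\cos\phi$ is a trigonometric polynomial of degree one, only two moments survive, namely $\int_0^{2\pi}\log(3-2\cos\phi)\dif\phi=2\pi\log\tfrac{3+\sqrt5}{2}$ and $\int_0^{2\pi}\cos\phi\log(3-2\cos\phi)\dif\phi=-2\pi r=\pi(\sqrt5-3)$. Assembling these and simplifying $\tfrac{3-\sqrt5}{4}-\tfrac12=\tfrac{1-\sqrt5}{4}$ gives $\mH(xy-yx+1)=\tfrac14\big(1-\sqrt5+3\log\tfrac{3+\sqrt5}{2}\big)$, as claimed.
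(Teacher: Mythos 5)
Your proposal is correct and follows essentially the same route as the paper: integrate in $y$ first using the Sylvester-form results of Section~\ref{sec6} (the paper invokes Theorem~\ref{thm:a=b} with $c=0$ where you invoke Theorem~\ref{thm:ab} with $b=-x$, which is equivalent), reduce the outer integral to a one-dimensional integral in $|\!\IM x|=\sin\theta$, and reuse $\int_{\mbt^1(\mbh)}\log|\!\IM x|\dif\mu=\tfrac12-\log 2$ from Theorem~\ref{thm:x2-y2}. Your final evaluation via $3-2\cos\phi=\tfrac{3+\sqrt5}{2}(1-2r\cos\phi+r^2)$ and the Fourier expansion is exactly the ``same trick as Lemma~\ref{lem:jenformula}'' that the paper alludes to but omits, and your observation that the $\log\sin\theta$ terms cancel identically is a slightly cleaner bookkeeping than the paper's $M(\theta)$; both give the same integral and the same value.
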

\begin{proof}
    Using Theorem~\ref{thm:a=b} in the variable $y$, we get
\[
    \mH(xy-yx)=\int_{\mbt^{1}(\mbh)} \log{|\!\IM x|}\dif\mu(x)+\log 2-\frac{1}{2}.
\]
Recall the proof of Theorem~\ref{thm:x2-y2} where we get
\[
\int_{\mbt^{1}(\mbh)}\log{|\!\IM x|}\dif\mu(x)=\frac{1}{2}-\log 2.
\]
This proves the given identity. Similarly, the Mahler measure of $xy-yx+1$ can be reduced to
\[
     \mH(xy-yx+1)=\frac{1}{2}-\log 2+\frac{1}{\pi}\int_{0}^{\pi}M(\theta)\dif\theta,
\]
where
\[
M(\theta)=\Big(\frac{1}{4}+\sin^2\theta\Big)\log(5-4\cos^2\theta)-2\log(\sin\theta)\sin^2\theta-\sin^2\theta.
\]
The calculation uses exactly the same trick as Theorem~\ref{thm:x2-y2} and Lemma~\ref{lem:jenformula} before, so we omit the details.
\end{proof}

\section{Quaternionic Lehmer problem}\label{sec8}

In~\cite{Leh}, Lehmer asked what is the smallest non-zero Mahler measure of monic integer polynomials. In addition to his question, he gave the following example of degree $10$, irreducible reciprocal polynomials
\begin{align*}
    L_{10}(x)=x^{10}+x^9-x^7-x^6-x^5-x^4-x^3+x+1,
\end{align*}
where we have $\mC(L_{10})=0.16235\dots$ which is conjectured to be the smallest non-zero Mahler measure.

A parallel question is: \emph{what is the smallest non-zero quaternionic Mahler measure that a monic irreducible integer polynomial $f$ can achieve}? The requirement of irreducibility here is crucial. Since the quaternionic Mahler measures may not always be positive as shown by the following example
\[\mH((x^2+1)^n)=-\frac{1}{2}n\to-\infty\quad\text{as } n\to\infty.\]

\begin{conj}[Quaternionic Lehmer conjecture]\label{conj:Mahler}
    There is a constant $c$ such that for every monic, irreducible polynomial $P(x)\in \mbz[x]$, we have 
    \[m_{\mbh}(P)\geq c.\]
\end{conj}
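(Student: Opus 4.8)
The plan is to reduce the conjecture, via the explicit root formulas of Theorem~\ref{thm:Jensenformulas}, to a single arithmetic quantity attached to the roots of $P$, and then to isolate exactly the input that goes beyond the classical Lehmer problem. First I would use that $P$ is monic, so $\log|c|=0$ and Theorem~\ref{thm:Jensenformulas}(3) expresses $m_{\mbh}(P)$ as a sum over the real roots and complex-conjugate pairs of $P$. Writing a complex root as $\gamma=|\gamma|e^{i\phi}$ and invoking the identities $|\gamma|^2\cos2\phi=\RE(\gamma^2)$ and $|\gamma|^{-2}\cos2\phi=\RE(\gamma^{-2})$, parts (1) and (2) of Theorem~\ref{thm:Jensenformulas} collapse the per-root contributions into the uniform closed form
\begin{equation*}
m_{\mbh}(P)=\mC(P)+\tfrac14\,\Sigma(P),\qquad \Sigma(P):=\sum_{|\gamma|\le1}\RE(\gamma^2)+\sum_{|\gamma|>1}\RE(\gamma^{-2}),
\end{equation*}
where $\mC(P)=\sum_{|\gamma|>1}\log|\gamma|\ge0$ is the classical Mahler measure, and both sums run over all roots counted with multiplicity. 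With this identity, Conjecture~\ref{conj:Mahler} becomes equivalent to a degree-uniform lower bound $\Sigma(P)+4\,\mC(P)\ge-C$.

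Second, I would record the crude estimate that follows immediately: every summand of $\Sigma(P)$ lies in $[-1,1]$, so $m_{\mbh}(P)\ge \mC(P)-\tfrac14\deg P\ge-\tfrac14\deg P$. The entire content of the conjecture is to remove this dependence on $\deg P$. Tracking where $\Sigma(P)$ can be most negative, the dangerous roots are those with $\arg\gamma\approx\pm\pi/2$ and $|\gamma|\approx1$, i.e. conjugates clustering near $\pm i$ on the unit circle. A reciprocal quadruple $\{re^{\pm i\phi},r^{-1}e^{\pm i\phi}\}$ contributes $4r^2\cos2\phi-8\log r\ge-4$ to $\Sigma(P)+4\,\mC(P)$, attained as $r\to1,\ \phi\to\pi/2$, and this bound cannot be improved root-by-root. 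Hence any uniform bound must somehow forbid too many Galois conjugates from accumulating near $\pm i$.

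Third, the natural analytic tool is Bilu's equidistribution theorem: if a sequence of irreducible integer polynomials $P_n$ satisfies $\mC(P_n)/\deg P_n\to0$, then the Galois conjugates equidistribute for Haar measure on the unit circle. Since the integrand $\gamma\mapsto\RE(\gamma^2)\ (|\gamma|\le1)$, $\RE(\gamma^{-2})\ (|\gamma|>1)$ is continuous across the circle and equals $\cos2\phi$ there, equidistribution gives $\Sigma(P_n)/\deg P_n\to\frac1{2\pi}\int_0^{2\pi}\cos2\phi\,d\phi=0$. This shows $\Sigma(P_n)=o(\deg P_n)$ in the small-measure regime and asymptotically rules out the worst case of clustering near $\pm i$.

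The main obstacle, and the reason the statement remains a conjecture, is that this equidistribution argument is purely asymptotic and supplies no effective constant, whereas a uniform bound must also control the complementary regime in which $\mC(P)$ is bounded away from zero but $\Sigma(P)$ is large and negative. In that regime the needed control of the ``second trace'' $\Sigma(P)$ appears to be at least as strong as the classical Lehmer conjecture itself; a complete proof would require a quantitative, degree-uniform version of the equidistribution input, which I expect to be the genuinely hard step and the reason one can presently offer only numerical evidence.
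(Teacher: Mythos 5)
The first thing to get straight is the status of the statement: it is labelled a \emph{conjecture}, and the paper does not prove it. Section~\ref{sec8} offers only partial results (Proposition~\ref{prop:lowerbound}, Proposition~\ref{prop:mh&mc}) and numerical experiments, which in fact indicate that any admissible constant must satisfy $c\le-0.5829\dots$ (the example $Q_{66}$). So your proposal, which ends by conceding that a uniform bound is out of reach, is of the right kind, and the only meaningful comparison is with the paper's partial results. On that score your reduction is correct: since $|\gamma|^{\pm 2}\cos 2\phi=\RE(\gamma^{\pm 2})$, Theorem~\ref{thm:Jensenformulas} does give the exact identity $m_{\mbh}(P)=\mC(P)+\tfrac14\Sigma(P)$ with your $\Sigma(P)$, your crude consequence $m_{\mbh}(P)\ge-\tfrac14\deg P$ is essentially Proposition~\ref{prop:lowerbound}, your quadruple computation ($4r^2\cos2\phi-8\log r\ge -4$, sharp as $r\to1$, $\phi\to\pi/2$) is right, and your conclusion that the danger comes from conjugates clustering near $\pm i$ is exactly the phenomenon behind the paper's record examples $Q_8$, $Q_{50}$, $Q_{66}$, all engineered from cyclotomic factors whose roots crowd the imaginary axis.

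Where you genuinely diverge from the paper is the equidistribution input. You invoke Bilu's theorem, which is qualitative and ineffective; the paper's Proposition~\ref{prop:mh&mc} runs the same comparison through the Erd\"os--Tur\'an discrepancy inequality with Soundararajan's constant, yielding the quantitative bound $\mH(P)\ge\mC(P)-\tfrac{8}{\pi}\sqrt{Nh(P)}$. The trade-off is what you would expect: Bilu gives $\Sigma(P_n)=o(\deg P_n)$ along any sequence with $\mC(P_n)/\deg P_n\to0$ but supplies no constants, while Erd\"os--Tur\'an gives constants but an error $\sqrt{Nh(P)}$ that still grows with the degree; neither produces a uniform $c$, which is precisely why the statement remains a conjecture. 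One caution if you try to turn your sketch into a theorem along the paper's lines: the per-root inequality $|\gamma|^{\pm2}\cos2\phi\ge-\cos2\phi$, which is implicitly what one needs to pass from $\Sigma(P)$ to $-\sum_{\ell}\cos2\phi_\ell$ as in the first displayed inequality of the paper's proof of Proposition~\ref{prop:mh&mc}, fails exactly for the dangerous roots with $\cos 2\phi_\ell<0$. The correct bridge is
\begin{equation*}
\Bigl|\Sigma(P)-\sum_{\ell}\cos 2\phi_\ell\Bigr|\le\sum_{|\gamma|\le1}\bigl(1-|\gamma|^{2}\bigr)+\sum_{|\gamma|>1}\bigl(1-|\gamma|^{-2}\bigr)\le 4\,\mC(P),
\end{equation*}
using $1-x\le-\log x$ and $\prod_{\ell}|\gamma_\ell|=|a_0|\ge1$ for monic integer $P$, after which the discrepancy (or Bilu) estimate is applied to $\sum_\ell\cos2\phi_\ell$; your formulation, which keeps the radial factors inside a single continuous test function, avoids this pitfall altogether. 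With that caveat, your analysis is sound, consistent with everything the paper proves, and your diagnosis of the obstruction---degree-uniform quantitative control of $\Sigma(P)$ in the regime where $\mC(P)$ is bounded below, plausibly as hard as Lehmer's original problem---is a fair account of why only numerical evidence is offered.
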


One starts by observing the quaternionic Mahler measure of cyclotomic polynomials.
\begin{prop}
    Let $\Phi_n(x)$ be the $n$-th cyclotomic polynomial. Then we have 
    \[m_{\mbh}(\Phi_n)=\begin{cases}
        \frac{1}{4}\mu(n)&\text{if $2\nmid n$}\\
        \frac{1}{4}\mu(n/2)&\text{if $2\mid n$, $4\nmid n$}\\
        \frac{1}{2}\mu(n/2)&\text{if $4\mid n$},
    \end{cases}\]
    where $\mu(n)$ is the M\"obius function.
\end{prop}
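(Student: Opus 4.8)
The plan is to exploit that $\Phi_n(x)$ is a \emph{monic} polynomial with \emph{real} (indeed integer) coefficients, all of whose roots, the primitive $n$-th roots of unity, lie on the unit circle. Consequently Theorem~\ref{thm:Jensenformulas} applies verbatim and reduces the entire computation to a finite trigonometric sum. First I would dispose of the two degenerate cases $n=1,2$: here $\Phi_1(x)=x-1$ and $\Phi_2(x)=x+1$ have the real roots $1$ and $-1$, so part~(1) of Theorem~\ref{thm:Jensenformulas} (with $|\alpha|=1$) gives $m_{\mbh}=\tfrac14$ in both cases, matching the claimed $\tfrac14\mu(1)$.

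For $n\geq 3$ no primitive $n$-th root is real, so the roots split into conjugate pairs $e^{\pm 2\pi i k/n}$ with $\gcd(k,n)=1$, and $\Phi_n$ factors over $\mbr$ into the quadratics $x^2-2\cos(2\pi k/n)\,x+1$. Applying part~(2) of Theorem~\ref{thm:Jensenformulas} with $|\alpha|=1$ and $\phi=2\pi k/n$, each such quadratic contributes $\tfrac12\cos(4\pi k/n)$. Since $\log|c|=0$ by monicity and, as $k$ runs over all residues coprime to $n$, each conjugate pair $\{k,n-k\}$ is counted twice, part~(3) of Theorem~\ref{thm:Jensenformulas} yields
\[
m_{\mbh}(\Phi_n)=\frac14\sum_{\substack{1\leq k\leq n\\ \gcd(k,n)=1}}\cos\frac{4\pi k}{n}=\frac14\,c_n(2),
\]
where $c_n(2)=\sum_{\gcd(k,n)=1}e^{4\pi i k/n}$ is the Ramanujan sum (the imaginary parts cancel by pairing $k\leftrightarrow n-k$, so the exponential sum is real and equals the cosine sum).

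It then remains to evaluate the Ramanujan sum via the standard identity $c_n(2)=\sum_{d\mid\gcd(2,n)}d\,\mu(n/d)$, and the trichotomy in the statement is precisely the split according to $\gcd(2,n)\in\{1,2\}$ combined with the vanishing of $\mu$. If $n$ is odd then $\gcd(2,n)=1$ and $c_n(2)=\mu(n)$. If $2\mid n$ but $4\nmid n$, writing $n=2m$ with $m$ odd gives $c_n(2)=\mu(2m)+2\mu(m)=-\mu(m)+2\mu(m)=\mu(m)=\mu(n/2)$. If $4\mid n$, then $\mu(n)=0$ and $c_n(2)=2\mu(n/2)$. Dividing by $4$ produces the three stated formulas. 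The only genuine work lies in this last step: the Ramanujan-sum bookkeeping, in particular correctly accounting for the double-counting of conjugate pairs (the factor $\tfrac14$ rather than $\tfrac12$) and the parity-dependent vanishing of the M\"obius function. All of the analytic content is already packaged in Theorem~\ref{thm:Jensenformulas}, so no new integration is required.
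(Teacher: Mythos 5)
Your proof is correct and follows essentially the same route as the paper: factor $\Phi_n$ into real quadratic (and, for $n\le 2$, linear) factors, apply Theorem~\ref{thm:Jensenformulas} to reduce $m_{\mbh}(\Phi_n)$ to $\frac{1}{4}\sum_{(k,n)=1}\cos\frac{4\pi k}{n}$, and recognize this as the Ramanujan sum $c_n(2)$. The only difference is cosmetic: the paper cites Hardy--Wright for the evaluation of $c_n(2)$, whereas you carry out the M\"obius-sum bookkeeping explicitly (and treat $n=1,2$ separately, which the paper's uniform pairing handles implicitly).
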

\begin{proof}
    Since $\Phi_n(x)=\prod_{(d,n)=1}(x-\zeta_n^d)$, by Theorem \ref{thm:Jensenformulas}, we have 
    \[m_{\mbh}(\Phi_n)=\frac{1}{2}\sum_{(d,n)=1}m_{\mbh}((x-\zeta_n^d)(x-\zeta_n^{-d}))=\frac{1}{4}\sum_{(d,n)=1}\cos \frac{4d\pi}{n}.\]
    The right-hand side is the well-known Ramanujan's sum, see for instance \cite[Theorem 272]{HW}.
\end{proof}

The following proposition offers a trivial bound for quaternionic Mahler measure.

\begin{prop}~\label{prop:lowerbound}
If $P(x)$ is a monic slice-preserving polynomial of degree $n$, then we have
    \[\mH(P)\geq -\frac{1}{2}\left\lfloor\dfrac{n}{2}\right\rfloor,\]
where the equality holds if and only if $P(x)=(x^2+1)^m$ when $n=2m$ is even, and $P(x)=x(x^2+1)^m$ when $n=2m+1$ is odd.
\end{prop}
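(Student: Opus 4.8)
The plan is to reduce everything to the Jensen-type formulas already established in Theorem~\ref{thm:Jensenformulas}. Since $P$ is a monic real polynomial, its factorization \eqref{eq:factorofp} has $c=1$, the linear factors $x-\alpha_h$ correspond to the real roots $\alpha_h\in\mbr$, and the quadratic factors $x^2-2\RE\beta_\ell\cdot x+\abs{\beta_\ell}^2$ correspond to the conjugate pairs of non-real roots, so that $\beta_\ell\in\mbc\setminus\mbr$. By part (3) of Theorem~\ref{thm:Jensenformulas} we have $\mH(P)=\sum n_h\,\mH(x-\alpha_h)+\sum m_\ell\,\mH(x^2-2\RE\beta_\ell\cdot x+\abs{\beta_\ell}^2)$, so it suffices to bound each factor separately and then count the factors.

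First I would handle the linear factors. Using part (1) with $\alpha_h\in\mbr$, for $\abs{\alpha_h}\leq 1$ the contribution is $\tfrac14\abs{\alpha_h}^2\geq 0$, and for $\abs{\alpha_h}>1$ it is $\tfrac14\abs{\alpha_h}^{-2}+\log\abs{\alpha_h}>0$. In either case $\mH(x-\alpha_h)\geq 0$, with equality precisely when $\alpha_h=0$, i.e.\ when the factor is $x$.

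The core estimate is for the quadratic factors. Writing $r=\abs{\beta_\ell}$ and $\phi=\arccos(\RE\beta_\ell/r)$, part (2) gives the contribution $\tfrac12 r^2\cos2\phi$ when $r\leq 1$ and $\tfrac12 r^{-2}\cos2\phi+2\log r$ when $r>1$. Minimizing over $\phi$ amounts to setting $\cos2\phi=-1$. For $r\leq 1$ this yields $-\tfrac12 r^2\geq -\tfrac12$, with equality iff $r=1$; for $r>1$ one is reduced to checking that $g(r)\coloneqq -\tfrac12 r^{-2}+2\log r>-\tfrac12$, which holds since $g(1)=-\tfrac12$ and $g'(r)=r^{-3}+2r^{-1}>0$ on $(1,\infty)$. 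Hence every quadratic factor contributes $\geq-\tfrac12$, with equality exactly when $r=1$ and $\cos2\phi=-1$, i.e.\ $\RE\beta_\ell=0$ and $\abs{\beta_\ell}=1$, which singles out the factor $x^2+1$.

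Combining the two bounds gives $\mH(P)\geq-\tfrac12\sum m_\ell$. Since $2\sum m_\ell\leq\deg P=n$, we have $\sum m_\ell\leq\lfloor n/2\rfloor$, whence $\mH(P)\geq-\tfrac12\lfloor n/2\rfloor$. For the equality characterization I would trace back the two equality conditions: saturation forces every linear factor to be $x$, every quadratic factor to be $x^2+1$, and $\sum m_\ell=\lfloor n/2\rfloor$. When $n=2m$ is even this leaves no room for a linear factor, so $P=(x^2+1)^m$; when $n=2m+1$ is odd exactly one linear factor (necessarily $x$) survives, so $P=x(x^2+1)^m$. The only genuinely delicate point is the monotonicity estimate $g(r)>-\tfrac12$ for $r>1$, together with the bookkeeping of which configuration of factors saturates both inequalities simultaneously; everything else is a direct application of Theorem~\ref{thm:Jensenformulas}.
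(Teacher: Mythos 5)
Your proof is correct and follows essentially the same route as the paper: bound each linear factor below by $\mH(x)=0$ and each quadratic factor below by $\mH(x^2+1)=-\tfrac12$ using parts (1) and (2) of Theorem~\ref{thm:Jensenformulas}, then combine via part (3) and count degrees. The paper's own proof is just a terser version of this; your explicit minimization over $\phi$ and $r$ (including the monotonicity of $-\tfrac12 r^{-2}+2\log r$) and the equality bookkeeping simply supply details the paper leaves implicit.
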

\begin{proof}
    With (1) and (2) in Theorem~\ref{thm:Jensenformulas}, one finds that the minimal Mahler measure for a linear polynomial is achieved when 
    \[\mH(x-\alpha)\geq \mH(x)=0, \]
    the minimal Mahler measure of a quadratic polynomial is achieved when 
    \[\mH(x^2-2\RE \alpha\cdot x+|\alpha|^2)\geq\mH(x^2+1)=-\frac{1}{2}.\]
Then using (3) of Theorem~\ref{thm:Jensenformulas} we get the lower bound of Mahler measures.
\end{proof}

Employing the Cauchy--Schwarz inequality as in Theorem~\ref{thm:existence}, we find the following lower bound for slice regular polynomials. It is natural to wonder if the lower bound in Proposition~\ref{prop:lowerbound} still holds when $n$ is odd.
\begin{cor}
    Let $P(x)$ be a monic slice regular quaternionic polynomial of degree $n$, then we have
    \[\mH(P)\geq-\frac{n}{4},\]
where the equality holds only if $n=2m$ is even and $P(x)=(x^2+1)^m$.
\end{cor}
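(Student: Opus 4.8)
The plan is to combine the existence inequality from Theorem~\ref{thm:existence} with the sharp real-polynomial bound of Proposition~\ref{prop:lowerbound}. The key observation is that for a monic slice regular polynomial $P$ of degree $n$, the symmetrization $P^s$ is a \emph{real} polynomial of degree $2n$, and it is again monic (its leading coefficient is $a_n\overline{a}_n=|a_n|^2=1$). Since $P^s\in\mbr[x]$ has even degree $2n$, Proposition~\ref{prop:lowerbound} applies directly and gives $\mH(P^s)\geq-\tfrac{1}{2}\lfloor 2n/2\rfloor=-\tfrac{n}{2}$. Feeding this into the inequality $\mH(P)\geq\tfrac{1}{2}\mH(P^s)$ from Theorem~\ref{thm:existence} yields $\mH(P)\geq-\tfrac{n}{4}$, which is exactly the asserted bound.

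First I would verify that $P^s$ is monic of degree $2n$, so that the hypotheses of Proposition~\ref{prop:lowerbound} are met; this follows immediately from the power-series formula $f^s(q)=\sum_k q^k\sum_{l}a_l\overline{a}_{k-l}$ defining the symmetrization, whose top coefficient is $|a_n|^2=1$. Second I would invoke Theorem~\ref{thm:existence} to get $\mH(P)\geq\tfrac12\mH(P^s)$ and Proposition~\ref{prop:lowerbound} to get $\mH(P^s)\geq-\tfrac{n}{2}$, and chain the two inequalities.

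For the equality clause the two inequalities in the chain must both be equalities simultaneously. The equality in Theorem~\ref{thm:existence} forces $P$ to be a real (slice preserving) polynomial, at which point $P^s=P^2$ and $\mH(P)=\tfrac12\mH(P^s)=\mH(P)$ trivially; but now $P$ itself is a monic real polynomial, so the equality case of Proposition~\ref{prop:lowerbound} governs the second inequality. When $P$ is real of degree $n$, Proposition~\ref{prop:lowerbound} attains its minimum $\mH(P)=-\tfrac12\lfloor n/2\rfloor$, and equality with $-\tfrac{n}{4}$ requires $\tfrac12\lfloor n/2\rfloor=\tfrac{n}{4}$, i.e.\ $n=2m$ even, with the extremal polynomial $P(x)=(x^2+1)^m$. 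This explains the ``only if'' in the statement, and I would present the equality analysis as the main point requiring care.

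The main obstacle is the bookkeeping in the equality case: one must track that the equality condition of Theorem~\ref{thm:existence} (namely $P$ real) interacts correctly with the equality condition of Proposition~\ref{prop:lowerbound}, and in particular that the odd-degree extremal case $x(x^2+1)^m$ from Proposition~\ref{prop:lowerbound} is excluded here because it does not meet $-\tfrac12\lfloor n/2\rfloor=-\tfrac{n}{4}$. The corollary only claims ``only if'', not ``if and only if'', precisely because the sharper question of whether odd-degree polynomials can achieve the bound $-\tfrac{n}{4}$ is left open (as the surrounding remark flags); so I would be careful to assert only the necessary direction rather than overclaim a full characterization.
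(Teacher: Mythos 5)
Your proposal is correct and is essentially the paper's own argument: the corollary is stated there without a separate proof precisely because it follows, as you show, by chaining the Cauchy--Schwarz inequality $\mH(P)\geq\tfrac{1}{2}\mH(P^{s})$ of Theorem~\ref{thm:existence} with Proposition~\ref{prop:lowerbound} applied to the monic real polynomial $P^{s}$ of degree $2n$. Your equality analysis---equality forces $P$ to be real, after which the equality case of Proposition~\ref{prop:lowerbound} rules out odd $n$ and pins down $P(x)=(x^{2}+1)^{m}$---is also sound.
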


The formula of Mahler measures simplifies considerably when all roots of the polynomial are located either outside or inside the unit circle.

\begin{prop}\label{cor:rootinunitcircle}
    Let $P(x)=a_nx^n+a_{n-1}x^{n-1}+\dots+a_0$ be a slice-preserving polynomial of degree $n\geq 1$, and set $a_j=0$ for $j<0$ or $j>n$. If all roots of $P$ lie inside the unit circle, then we have 
    \[m_{\mbh}(P)=\frac{a_{n-1}^2-2a_{n-2}a_n}{4a_n^2}+\log|a_n|.\]
    If all the roots of $P$ lie outside the unit circle, then we have 
    \[m_{\mbh}(P)=\frac{a_{1}^2-2a_{2}a_0}{4a_0^2}+\log|a_0|.\]
\end{prop}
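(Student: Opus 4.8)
The plan is to invoke the factorization formula, part (3) of Theorem~\ref{thm:Jensenformulas}, and then reorganize the resulting sum into a power sum of the roots that Vieta's formulas can evaluate. Write $P(x)=a_n\prod_{i=1}^{n}(x-r_i)$, where $r_1,\dots,r_n\in\C$ are all the roots counted with multiplicity. Since $P$ is real, this multiset is stable under complex conjugation, so the real factorization~\eqref{eq:factorofp} groups the non-real $r_i$ into conjugate pairs producing the quadratic factors, with leading constant $c=a_n$.

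First I would treat the case where every root lies inside the unit circle. For a real root $\alpha$ we have $m_{\mbh}(x-\alpha)=\tfrac14\alpha^2$, while for a conjugate pair $\beta,\overline{\beta}$ with $\beta=|\beta|e^{i\phi}$ the corresponding quadratic factor contributes $\tfrac12|\beta|^2\cos2\phi$. The key observation is that $|\beta|^2\cos2\phi=\RE(\beta^2)$, so that this pair contributes exactly $\tfrac14(\beta^2+\overline{\beta}^2)$. Hence in every case the per-root bookkeeping collapses to
\[
m_{\mbh}(P)=\log|a_n|+\frac14\sum_{i=1}^{n}r_i^{2}.
\]
Now $\sum_i r_i^2=(\sum_i r_i)^2-2\sum_{i<j}r_ir_j$, and Vieta's formulas give $\sum_i r_i=-a_{n-1}/a_n$ and $\sum_{i<j}r_ir_j=a_{n-2}/a_n$, whence $\sum_i r_i^2=(a_{n-1}^2-2a_{n-2}a_n)/a_n^2$; this yields the first formula.

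For the case where all roots lie outside the unit circle, I would run the same argument with the $|\alpha|>1$ branches of Theorem~\ref{thm:Jensenformulas}. A real root now contributes $\tfrac14\alpha^{-2}+\log|\alpha|$, and a conjugate pair contributes $\tfrac12|\beta|^{-2}\cos2\phi+2\log|\beta|$; using $|\beta|^{-2}\cos2\phi=\RE(\beta^{-2})$ (which follows from $\beta^2/|\beta|^4=\overline{\beta}^{-2}$) these combine to $\tfrac14(\beta^{-2}+\overline{\beta}^{-2})+\log|\beta|+\log|\overline{\beta}|$. Summing over all roots,
\[
m_{\mbh}(P)=\log|a_n|+\sum_{i=1}^{n}\log|r_i|+\frac14\sum_{i=1}^{n}r_i^{-2}.
\]
Here $\log|a_n|+\sum_i\log|r_i|=\log\bigl|a_n\prod_i r_i\bigr|=\log|a_0|$, because $\prod_i r_i=(-1)^na_0/a_n$; and the reciprocals $r_i^{-1}$ are the roots of the reversed polynomial $a_0x^n+a_1x^{n-1}+\dots+a_n$, so the identical power-sum computation gives $\sum_i r_i^{-2}=(a_1^2-2a_2a_0)/a_0^2$. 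This produces the second formula.

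The computation is essentially routine once the monomial and quadratic contributions are rewritten uniformly as $\RE(\beta^{\pm2})$. The only point requiring genuine care is the consistent splitting of each quadratic factor's contribution into a symmetric pair, so that the total really equals a power sum over all $n$ roots, followed by the correct identification of the elementary symmetric functions with the coefficients — in particular the reversed-polynomial bookkeeping that turns $\sum_i r_i^{-2}$ into the ratio involving $a_0,a_1,a_2$ in the exterior case.
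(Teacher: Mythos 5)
Your proof is correct. For the interior case it coincides with the paper's argument: both rewrite each quadratic factor's contribution $\tfrac12|\beta_\ell|^2\cos2\phi_\ell$ as $\tfrac14(\beta_\ell^2+\overline{\beta}_\ell^2)$, invoke part (3) of Theorem~\ref{thm:Jensenformulas} to reduce $m_{\mbh}(P)$ to $\log|a_n|$ plus a quarter of the power sum $\sum_i r_i^2$, and evaluate that power sum by Vieta's formulas. The exterior case is where you diverge. The paper disposes of it in one line by passing to the reversed polynomial $\widetilde{P}(x)=x^nP(1/x)$, whose roots lie inside the unit circle; this implicitly uses the invariance $m_{\mbh}(\widetilde{P})=m_{\mbh}(P)$, which is true (on $\mbt^1(\mbh)$ one has $|x^n|=1$ and inversion preserves the Haar measure) but is not justified in the paper. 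You instead compute directly with the $|\alpha|>1$ branches of Theorem~\ref{thm:Jensenformulas}, observing that the logarithmic terms assemble into $\log|a_n|+\sum_i\log|r_i|=\log|a_0|$ and that $\sum_i r_i^{-2}=(a_1^2-2a_2a_0)/a_0^2$, with the reversed polynomial entering only as an algebraic bookkeeping device for the reciprocal roots. Your route is slightly longer but more self-contained, since it never needs the measure-theoretic invariance under reversal; the paper's route is shorter once that invariance is granted.
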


\begin{proof}
    Note that the roots of $\widetilde{P}(x):=x^nP(1/x)$ are inside the unit circle if all roots of $P(x)$ are outside the unit circle, so we only need to prove the first assertion. Suppose $\beta_\ell$ is a non-real root of $P(x)$, then by Theorem \ref{thm:Jensenformulas}, we have 
    \[m_{\mbh}(x^2-2\RE \beta_\ell \cdot x+|\beta_\ell|^2)=\frac{1}{2}|\beta_\ell|^2\cos 2\phi_\ell=\frac{1}{4}(\beta_\ell^2+\ol{\beta}_\ell^2).\]
    So by factoring $P$ as in \eqref{eq:factorofp} and applying Theorem \ref{thm:Jensenformulas} again, we get 
    \[m_{\mbh}(P)=\log|a_n|+\frac{1}{4}\left(\sum_h\alpha_h^2+\sum_\ell(\beta_\ell^2+\ol{\beta}_\ell^2)\right)=\log|a_n|+\frac{a_{n-1}^2-2a_{n-2}a_n}{4a_n^2}.\qedhere\]
\end{proof}

As a direct consequence, this allows us to explore the quaternionic Mahler measure of a slice-preserving polynomial under the power change of the variable. While the classical Mahler measure is known to remain invariant under the transformation $x\mapsto x^r$, this property does not hold in our context. 
\begin{prop}
    Let $P(x)$ be a slice-preserving polynomial and $r>2$ be a positive integer, then 
    \[ m_{\mbh}(P(x^r))=m_{\mbc}(P).\]
\end{prop}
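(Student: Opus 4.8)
The plan is to reduce the claim to the classical Jensen formula $m_{\mbc}(P)=\log|a_n|+\sum_{|\alpha_j|>1}\log|\alpha_j|$ by an explicit root-by-root evaluation of $m_{\mbh}(P(x^r))$. First I would factor $P$ over $\mbc$ as $P(x)=a_n\prod_{j}(x-\alpha_j)$, so that $P(x^r)=a_n\prod_j(x^r-\alpha_j)$ is again a real polynomial whose roots are precisely the $r$-th roots of the $\alpha_j$. A root $\gamma$ with $\gamma^r=\alpha_j$ satisfies $|\gamma|=|\alpha_j|^{1/r}$, so it lies inside, on, or outside the unit circle exactly according to whether $\alpha_j$ does.

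Next I would apply the additivity formula Theorem~\ref{thm:Jensenformulas}(3) together with the monomial and quadratic evaluations (1) and (2). Merging a real root and a conjugate pair into a single sum over all complex roots $\gamma$ of $P(x^r)$ (using $\frac12|\gamma|^2\cos2\phi=\frac14(\gamma^2+\overline{\gamma}^2)$ for $|\gamma|\le1$, and the analogous identity with $\gamma^{-2}$ together with $2\log|\gamma|=\log|\gamma|+\log|\overline{\gamma}|$ for $|\gamma|>1$), this yields the closed form
\begin{equation*}
m_{\mbh}(P(x^r))=\log|a_n|+\frac14\Big(\sum_{|\gamma|\le1}\gamma^2+\sum_{|\gamma|>1}\gamma^{-2}\Big)+\sum_{|\gamma|>1}\log|\gamma|,
\end{equation*}
where all sums run over the complex roots $\gamma$ of $P(x^r)$ counted with multiplicity.

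The decisive step is then to collect these sums over the $r$ preimages $\gamma=\alpha_j^{1/r}\zeta_r^{k}$ ($k=0,\dots,r-1$, $\zeta_r=e^{2\pi i/r}$) of each fixed $\alpha_j$. The logarithmic part reassembles as $\sum_{\gamma^r=\alpha_j}\log|\gamma|=\log|\alpha_j|$, so after summing over the $\alpha_j$ with $|\alpha_j|>1$ it reproduces exactly the Jensen expression $\log|a_n|+\sum_{|\alpha_j|>1}\log|\alpha_j|=m_{\mbc}(P)$. The algebraic part is governed by $\sum_{k=0}^{r-1}\zeta_r^{\pm2k}$: for each $\alpha_j$ one has $\sum_{\gamma^r=\alpha_j}\gamma^2=\alpha_j^{2/r}\sum_{k=0}^{r-1}\zeta_r^{2k}$, and likewise for $\gamma^{-2}$, and this geometric sum vanishes precisely because $\zeta_r^2\ne1$, i.e. because $r>2$. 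Hence every algebraic term drops out and $m_{\mbh}(P(x^r))=m_{\mbc}(P)$.

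I expect the main obstacle to be careful bookkeeping rather than any hard analysis: one must correctly fold the contributions of real roots and conjugate pairs into one sum over complex roots, keep track of which roots of $P(x^r)$ fall inside versus outside the circle, and pin down that the vanishing $\sum_{k=0}^{r-1}\zeta_r^{2k}=0$ is exactly the hypothesis $r>2$. For $r=2$ this sum equals $2$ instead, which is why the invariance fails and one instead recovers the extra quadratic terms recorded in Proposition~\ref{cor:rootinunitcircle}.
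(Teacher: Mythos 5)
Your proof is correct, and it reaches the result by a mildly different route than the paper, though both hinge on the same underlying fact. The paper factors $P$ into \emph{real irreducible} factors as in \eqref{eq:factorofp}, so that $P(x^r)$ is a product of factors $x^r-\alpha_h$ and $x^{2r}-2\RE\beta_\ell\cdot x^r+|\beta_\ell|^2$, each having all roots inside or all outside the unit circle; it then invokes Proposition~\ref{cor:rootinunitcircle}, whose correction terms $\frac{a_{n-1}^2-2a_{n-2}a_n}{4a_n^2}$ (resp. $\frac{a_1^2-2a_2a_0}{4a_0^2}$) vanish because for $r>2$ the relevant subleading coefficients of these factors are zero, leaving exactly $\log^{+}|\alpha_h|$ and $\log^{+}|\beta_\ell|^2$. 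You instead expand over all complex roots of $P(x^r)$, assemble the Jensen formulas of Theorem~\ref{thm:Jensenformulas} into a single root-sum expression, and kill the quadratic terms via the geometric sum $\sum_{k=0}^{r-1}\zeta_r^{2k}=0$, valid precisely when $\zeta_r^2\neq 1$, i.e. $r>2$. These are two guises of the same fact: Newton's identity $p_2=e_1^2-2e_2$ translates your vanishing power sum of roots into the paper's vanishing coefficients. Neither argument is stronger, but yours makes explicit where the hypothesis $r>2$ enters and why the invariance fails at $r=2$ (the sum equals $2$, recovering the correction terms), a point the paper's proof leaves implicit inside its appeal to Proposition~\ref{cor:rootinunitcircle}; the paper's version, in turn, is shorter because that proposition was already established.
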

\begin{proof}
    Suppose $P(x)$ factors completely as in~\eqref{eq:factorofp}. Since the roots of $x^r-\alpha_h$ and $x^{2r}-2\RE \beta_\ell \cdot x^r+|\beta_\ell|^2$ are inside or outside the unit circle depending on whether $|\alpha_h| \leq 1$ or $|\alpha_h| \geq 1$, and similarly for $|\beta_\ell|$. So by Proposition \ref{cor:rootinunitcircle}, we have $m_{\mbh}(x^r-\alpha_h)=\log^+|\alpha_h|$ and $m_{\mbh}(x^{2r}-2\RE \beta_\ell \cdot x^r+|\beta_\ell|^2)=\log^+|\beta_\ell|^2$. Then the result is clear.
\end{proof}

Let $P(z)$ be a monic irreducible polynomial of degree $d$ with a complex factorization
\[P(z)=\prod_{\ell=1}^{d}(z-z_{\ell})=z^d+a_{d-1}z^{d-1}+\dots+a_0.\]
Numerical tests show that the polynomials with negative quaternionic Mahler measure are highly rare. The quaternionic Mahler measure of $P$ becomes much smaller when the roots $z_j$ are closer to the imaginary units $\pm i$ on the unit circle.  For an arc $I$ on the unit circle, let $N(I;P)$ denote the number of complex zeros $z_{j}$ with $\arg z_j\in I$. The \emph{discrepancy} of $P$ is defined by
\[\mathcal{D}(P)=\max_{I}\Big|N(I;P)-\frac{|I|}{2\pi}d\Big|.\]

Recall the famous result of Erd\"os--Tur\'an~\cite{ET} (see also Mignotte~\cite{Mig})  
\[\mathcal{D}(P)\leq \frac{8}{\pi}\sqrt{d h(P)},\]
where \[h(P)=\frac{1}{2\pi}\int_{0}^{2\pi}\log^{+}\frac{\abs{P(e^{i\theta})}}{\sqrt{|a_0|}}\dif\theta\quad\text{with }\log^{+}(x)=\max\{0,\log(x)\}.\]
Note the constant $8/\pi$ is given by Soundararajan~\cite{Sound}. This implies equidistribution of the root angles when the length of $P$ is not too large.

\begin{prop}~\label{prop:mh&mc}
    Let $P(z)$ be a monic irreducible polynomial with integer coefficients and non-zero constant term. Then we have
    \[\mH(P)\geq \mC(P)-\left(\frac{8}{\pi}+\frac{1}{2}\right)\sqrt{d h(P)}.\]
\end{prop}
\begin{proof}
    Write $z_{\ell}=r_{\ell}e^{i\phi_{\ell}}$ and put $\rho_{\ell}=\min\{r_{\ell}^{2},r_{\ell}^{-2}\}$. By~Theorem~\ref{thm:Jensenformulas}, we get
\[
    \mH(P)=\mC(P)+\frac{1}{4}\sum_{\ell=1}^{d}\rho_{\ell}\cos(2\phi_{\ell}).
\]
For $0<t<1$, set
\[
E_t^+=\{\theta\in[0,2\pi]:\cos 2\theta>t\},\quad
E_t^-=\{\theta\in[0,2\pi]:\cos 2\theta<-t\}.
\]
Viewed as subsets of the unit circle, each of $E_t^+$ and $E_t^-$ is a union of two arcs, and they have the same total length. Hence the discrepancy bound gives
\[
\left|N(E_t^\pm;P)-\frac{|E_t^\pm|}{2\pi}d\right|\leq 2\mathcal{D}(P),
\]
where $N(E;P)$ denotes the number of root arguments lying in $E$. Since $|E_t^+|=|E_t^-|$, it follows that
\[
\left|N(E_t^+;P)-N(E_t^-;P)\right|\leq 4\mathcal{D}(P).
\]
Moreover, for every $\theta$ we have
\[
\cos 2\theta=\int_0^1\left(\mathbf{1}_{E_t^+}(\theta)-\mathbf{1}_{E_t^-}(\theta)\right)\dif t.
\]
Applying this identity to $\theta=\phi_\ell$ and summing over all $\ell$ gives
\[
    \left|\sum_{\ell=1}^{d}\cos(2\phi_\ell)\right|\leq 4\mathcal{D}(P)\leq \frac{32}{\pi}\sqrt{d h(P)}.
\]
Moreover,
\[
    \frac{1}{4}\sum_{\ell=1}^{d}(1-\rho_\ell)\leq \min\left\{h(P),\frac{d}{4}\right\}\leq \frac{1}{2}\sqrt{d h(P)}.
\]
Here we used $1-e^{-2t}\leq 2t$ and the estimate
\[
    h(P)\geq \frac{1}{2}\sum_{\ell=1}^{d}\left|\log r_\ell\right|,
\]
which follows from Jensen's formula and the definition of $h(P)$. Therefore
\begin{align*}
    \mH(P)-\mC(P)
    &\geq -\frac{1}{4}\left|\sum_{\ell=1}^{d}\cos(2\phi_\ell)\right|
    -\frac{1}{4}\sum_{\ell=1}^{d}(1-\rho_\ell)\\
    &\geq -\left(\frac{8}{\pi}+\frac{1}{2}\right)\sqrt{d h(P)}.\qedhere
\end{align*}
\end{proof}

Following Hughes--Nikeghbali~\cite{HN}, we derive the following result in the same way as Proposition~\ref{prop:mh&mc}. 

\begin{prop}
Let $(P_d)$ be a sequence of complex polynomials, with
\[P_{d}(z)=\sum_{k=0}^{d}a_{d,k}z^k,\]
such that $a_{d,0}a_{d,d}\neq 0$ for all $d$, let
\[L_{d}(P_d)=\log\left(\sum_{k=0}^{d}|a_{d,k}|\right)-\frac{1}{2}\log |a_{d,0}|-\frac{1}{2}\log |a_{d,d}|.\]
If $L_{d}(P_d)=O(1)$, then we have 
\[\frac{1}{\sqrt{d}}\,\abs{\!\mH(P_{d})-\mC(P_{d})}=O(1)\qquad \text{as $d\to \infty$}.\]
\end{prop}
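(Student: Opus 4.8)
The plan is to adapt the proof of Proposition~\ref{prop:mh&mc} verbatim, replacing the Erd\"os--Tur\'an discrepancy bound (which requires control of the Mahler measure via $h(P)$) with the Hughes--Nikeghbali equidistribution estimate governed by the quantity $L_N(P_N)$. The key observation from Theorem~\ref{thm:Jensenformulas} is that for a real polynomial, the quaternionic and complex Mahler measures differ by a sum over the complex roots: writing each root as $z_\ell=|z_\ell|e^{i\phi_\ell}$, one has precisely
\[
\mH(P_N)-\mC(P_N)=-\frac{1}{4}\sum_{\ell=1}^{N}\cos(2\phi_\ell)+(\text{boundary terms from }|z_\ell|\neq 1),
\]
so that the difference is controlled by $\frac14\int_0^{2\pi}\cos(2\theta)\,\dif\nu(\theta)$, where $\nu(\theta)=N([0,\theta];P_N)$ counts roots with argument up to $\theta$. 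This is exactly the integral that appeared in Proposition~\ref{prop:mh&mc}, so the whole problem reduces to bounding this Riemann--Stieltjes integral.

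First I would reduce to the case where $P_N$ is real, or more precisely handle the difference root-by-root using part (3) of Theorem~\ref{thm:Jensenformulas}, so that the quantity to estimate is the equidistribution defect of the arguments $\phi_\ell$ weighted against the smooth test function $\cos 2\theta$. Next I would invoke the Hughes--Nikeghbali discrepancy estimate from~\cite{HN}: their result gives a bound on the discrepancy $\mathcal D(P_N)$ of the root arguments of the form $\mathcal D(P_N)=O(\sqrt{N\,L_N(P_N)})$ or similar, precisely tailored to the quantity $L_N(P_N)$ defined in the statement. Since the total variation of $\cos 2\theta$ on $[0,2\pi]$ equals $4$, a standard integration-by-parts (Koksma-type) inequality bounds $\bigl|\int_0^{2\pi}\cos(2\theta)\,\dif\nu(\theta)\bigr|$ by a constant multiple of $\mathcal D(P_N)$, hence by $O(\sqrt{N\,L_N(P_N)})$.

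Assembling these pieces, under the hypothesis $L_N(P_N)=O(1)$ the discrepancy bound becomes $\mathcal D(P_N)=O(\sqrt N)$, so that
\[
\bigl|\mH(P_N)-\mC(P_N)\bigr|\leq \frac14\Bigl|\int_0^{2\pi}\cos(2\theta)\,\dif\nu(\theta)\Bigr|+o(\sqrt N)=O(\sqrt N),
\]
and dividing through by $\sqrt N$ yields the claimed $O(1)$ bound as $N\to\infty$. The main obstacle I anticipate is matching the exact normalization in the Hughes--Nikeghbali theorem: one must verify that their hypotheses apply to the (possibly non-monic, possibly reducible) sequence $(P_N)$ with $a_{N,0}a_{N,N}\neq 0$, and that their discrepancy estimate is stated in terms of $L_N$ rather than $h(P_N)$, so that the proof genuinely parallels Proposition~\ref{prop:mh&mc} rather than merely resembling it. A secondary subtlety is that roots off the unit circle contribute the additional $\log^+$ boundary terms; these must be shown either to cancel or to be absorbed into the same $O(\sqrt N)$ error, which should follow since $\mC(P_N)$ already accounts for the radial contributions and the difference $\mH-\mC$ isolates only the angular part.
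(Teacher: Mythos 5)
Your overall skeleton does match the paper's (essentially one-line) proof: the paper derives this proposition ``the same way as Proposition~\ref{prop:mh&mc}'', i.e.\ by expressing $\mH(P_N)-\mC(P_N)$ through the root angles via Theorem~\ref{thm:Jensenformulas}, converting to a Stieltjes integral against $\nu$, and applying an Erd\"os--Tur\'an type discrepancy bound. Your first anticipated obstacle is not actually an issue: the classical Erd\"os--Tur\'an inequality (the deterministic ingredient in~\cite{HN}) is stated precisely in terms of $L_N(P_N)=\log\big(\sum_k|a_{N,k}|\big)-\tfrac12\log|a_{N,0}|-\tfrac12\log|a_{N,N}|$ and requires only $a_{N,0}a_{N,N}\neq 0$ --- no monicity or irreducibility --- so the angular half of your argument goes through as you describe.

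The genuine gap is in your treatment of the radial (``boundary'') terms. The exact identity coming from Theorem~\ref{thm:Jensenformulas} for a real polynomial with roots $z_\ell=|z_\ell|e^{i\phi_\ell}$ is
\[
\mH(P_N)-\mC(P_N)=\frac14\sum_{\ell=1}^{N}c_\ell\cos(2\phi_\ell),
\qquad c_\ell=\min\{|z_\ell|^2,\,|z_\ell|^{-2}\}\in[0,1]
\]
(testing $P=x^2+1$ shows the minus sign you copied from Proposition~\ref{prop:mh&mc} is a slip there; it is harmless here since you take absolute values). So the difference is a \emph{radially weighted} angular sum --- it does not ``isolate only the angular part'', and the weights cannot be discarded: the discrepancy bound controls $\sum_\ell\cos(2\phi_\ell)$, not $\sum_\ell c_\ell\cos(2\phi_\ell)$. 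The deviation satisfies
\[
\Big|\sum_\ell c_\ell\cos(2\phi_\ell)-\sum_\ell\cos(2\phi_\ell)\Big|\le\sum_\ell\,(1-c_\ell),
\]
and bounding this is exactly where the hypothesis $L_N(P_N)=O(1)$ must be used a second time (it is the ``clustering near the unit circle'' half of Hughes--Nikeghbali, which your proposal never invokes). Concretely: $1-c_\ell\le 2\,\big|\log|z_\ell|\big|$, while Jensen's formula, the relation $\prod_\ell|z_\ell|=|a_{N,0}|/|a_{N,N}|$, and the trivial bound $\mC(P_N)\le\log\sum_k|a_{N,k}|$ give
\[
\sum_{\ell}\big|\log|z_\ell|\big|=2\,\mC(P_N)-\log|a_{N,0}|-\log|a_{N,N}|\le 2\,L_N(P_N)=O(1).
\]
With this estimate inserted, your argument closes; without it, the assertion that the boundary terms are ``absorbed'' because ``$\mC(P_N)$ already accounts for the radial contributions'' is not a proof, and the claimed $o(\sqrt N)$ bound for them is unjustified.
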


We now turn to some numerical evidence. All irreducibility assertions in these examples were verified in PARI/GP~\cite{PARI2} using \texttt{polisirreducible}.

\begin{exa}
By examining monic irreducible polynomials with integer coefficients of degree $\leq 10$ and coefficients in $[-3,3]$, we found that the minimal quaternionic Mahler measure is reached by the following irreducible polynomial of degree $8$:
\[
Q_{8}(x)=x^8-x^7+x^6-2x^5+x^4-2x^3+x^2-x+1.
\]
Its Mahler measure is
\[
m_{\mbh}(Q_8)=-0.26975\dotsb .
\]
This example is related to cyclotomic polynomials by
\[
Q_{8}(x)=\Phi_5(x)\Phi_{10}(x)-x\Phi_3(x)\Phi_4(x)\Phi_6(x).
\]
Meanwhile, the quaternionic Mahler measure of Lehmer's polynomial $L_{10}$ is
\[
m_{\mbh}(L_{10})=0.247131\dotsb .
\]
\end{exa}

\begin{exa}
Inspired by the observation of $Q_{8}$, by testing millions of combinations involving cyclotomic polynomials, we found the following two irreducible examples with Mahler measure less than $-0.5$. Let
\[
Q_{66}(x)=\prod_{h=3}^{10}\Phi_h(x)\Phi_{14}(x)\Phi_{18}(x)\Phi_{90}(x)+x\Phi_{204}(x).
\]
Then $Q_{66}$ is irreducible of degree $66$, and
\[
m_{\mbh}(Q_{66})=-0.5829\dotsb .
\]
Similarly, let
\[
Q_{50}(x)=\Phi_4(x)\Phi_5(x)\Phi_6(x)\Phi_7(x)\Phi_{10}(x)\Phi_{80}(x)+x\Phi_{140}(x).
\]
Then $Q_{50}$ is irreducible of degree $50$, and
\[
m_{\mbh}(Q_{50})=-0.5145\dotsb .
\]
It can thus be inferred that, if Conjecture~\ref{conj:Mahler} is true, then
\[
c\leq -0.5829\dotsb .
\]
\end{exa}

\end{document}